\newtheorem{thm}{Theorem}[section]
\newtheorem{lem}[thm]{Lemma}
\newtheorem{cor}[thm]{Corollary}
\newtheorem{rem}[thm]{Remark}
\DeclareMathAlphabet{\mathpzc}{OT1}{pzc}{m}{it}
\numberwithin{equation}{section}
\newcommand{\Wqb}{W_{q,D}}
\newcommand{\Wqq}{\Wqb^{2-2/q}}
\newcommand{\Wqqp}{\Wqb^{2-2/q,+}}
\newcommand{\R}{\mathbb{R}}
\newcommand{\N}{\mathbb{N}}
\newcommand{\Wq}{\mathbb{W}_q}
\newcommand{\Wqd}{\dot{\mathbb{W}}_q^+}
\newcommand{\Lq}{\mathbb{L}_q}
\newcommand{\ml}{\mathcal{L}}
\newcommand{\mk}{\mathcal{K}}
\newcommand{\Om}{\Omega}
\newcommand{\ve}{\varepsilon}
\newcommand{\rd}{\mathrm{d}}
\newcommand{\bqn}{\begin{equation}}
\newcommand{\eqn}{\end{equation}}
\newcommand{\bqnn}{\begin{equation*}}
\newcommand{\eqnn}{\end{equation*}}
\newcommand{\bear}{\begin{eqnarray}} 
\newcommand{\eear}{\end{eqnarray}} 
\newcommand{\bean}{\begin{eqnarray*}} 
\newcommand{\eean}{\end{eqnarray*}} 
\newcommand{\bs}{\begin{split}}
\newcommand{\es}{\end{split}}
\newcommand{\dhr}{\mathrel{\lhook\joinrel\relbar\kern-.8ex\joinrel\lhook\joinrel\rightarrow}}
\title[Steady-State Equations of Cooperative or Competing Systems]
{On Nonlocal Parabolic Steady-State Equations of Cooperative or Competing Systems}
\author[Ch. Walker]{Christoph Walker}
\address{Leibniz Universit\"at Hannover, Institut f\"ur Angewandte Mathematik, Welfengarten 1, D--30167 Hannover, Germany.}
\email{walker@ifam.uni-hannover.de}
\begin{document}

\begin{abstract}
Some systems of parabolic equations with nonlocal initial conditions are studied. The systems arise when considering steady-state solutions to diffusive age-structured cooperative or competing species. 
Local and global bifurcation techniques are employed to derive a detailed description of the structure of positive coexistence solutions.
\end{abstract}

\keywords{Bifurcation, diffusion, age structure, maximal regularity, Krein-Rutman.\\
}

\maketitle

\section{Introduction and Main Results}\label{sec 0}

In this paper we characterize the structure of positive solutions to certain systems of coupled parabolic equations with nonlocal initial conditions. Such systems arise as steady-state equations of two age-structured diffusive populations which inhabit the same spatial region. The interaction between the two species is either of cooperative, competing, or predator-prey type leading to different structures of positive solutions. Denoting the density of the two species by $u=u(a,x)\ge 0$ and $v=v(a,x)\ge 0$ with $a\in (0,a_m)$ and $x\in\Om\subset\R^n$ referring to age and spatial position, respectively, the models we shall focus on are of the form
\begin{align}
\partial_a u-\Delta_D u&=-\alpha_1u^2\pm\alpha_2 vu\ ,\quad  a\in (0,a_m)\ ,\quad x\in\Om\ ,\label{1}\\
\partial_a v-\Delta_D v&=-\beta_1 v^2\pm\beta_2 u v\ ,\,\quad  a\in (0,a_m)\ ,\quad x\in\Om\ ,\label{2}
\end{align}
subject to the nonlocal initial conditions
\begin{align}
u(0,x)&=\eta\int_0^{a_m}b_1(a)\, u(a,x)\, \rd a\ , \quad   x\in\Om\ ,\label{3}\\
v(0,x)&=\xi\int_0^{a_m}b_2(a)\, v(a,x)\, \rd a\ ,  \quad x\in\Om\ .\label{4}
\end{align}
The equations are the steady-state equations of the corresponding time-dependent age-structured equations with diffusion. We refer to \cite{WebbSpringer} for a recent survey on the formidable literature about age-structured population models.  

The operator $-\Delta_D$ in \eqref{1}, \eqref{2} stands for the negative Laplacian on $\Om$ with subscript $D$ indicating that Dirichlet conditions
$$
 u(a,x)=v(a,x)=0\ ,\quad a\in (0,a_m)\ ,\quad x\in\partial\Om\ ,
$$
are imposed on the smooth boundary $\partial\Om$ of the bounded domain $\Om$. Normalization to $1$ of the diffusion coefficients in \eqref{1}, \eqref{2} is a purely notational simplification. The number $a_m>0$ denotes the maximal age of the species. Equations \eqref{3}, \eqref{4} represent the age-boundary conditions reflecting that individuals with age zero are those created when a mother individual of any age $a\in (0,a_m)$ gives birth with rates $\eta b_1(a)$ or $\xi b_2(a)$. The functions $b_j=b_j(a)\ge 0$ describe the profiles of the fertility rates while the parameters $\eta, \xi>0$ measure their intensity without affecting the structure of the birth profiles. For easier statements of the results we assume the birth profiles
\bqn\label{5}
b_j\in L_\infty^+((0,a_m))\ \text{with}\ b_j(a)>0\ \text{for $a$ near $a_m$}\ ,\quad j=1,2\ ,
\eqn
are normalized such that
\bqn\label{6}
\int_0^{a_m}b_j(a)e^{-\lambda_1 a}\,\rd a=1\ ,\quad j=1,2\ ,
\eqn
where $\lambda_1>0$ denotes the principal eigenvalue of $-\Delta_D$ on $\Om$. 

Assuming $\alpha_1,\alpha_2,\beta_1,\beta_2>0$, the form of the interaction between the two species is determined by the signs on the right hand side of equations \eqref{1}, \eqref{2}. Replacing $\pm$ by a positive sign $+$ in both of the equations \eqref{1} and \eqref{2}) corresponds to a system (see \eqref{1coo} , \eqref{2coo} below) where the two species are {\it cooperative}, while the case with $\pm$ replaced by negative signs~$-$ in each equation \eqref{1} and \eqref{2} (see \eqref{1comp}, \eqref{2comp} below) reflects a {\it competition} of the species. The case with mixed signs, e.g. a negative sign~$-$ in \eqref{1} instead of $\pm$ and a positive sign~$+$ in \eqref{2} describes a {\it predator-prey}-system (see \eqref{1pp}, \eqref{2pp} below) with a prey density $u$ and a predator density~$v$. 

This last case of a predator-prey-system was studied in \cite{WalkerJRAM} and local and global bifurcation phenomena of positive nontrivial solutions with respect to the parameters $\eta$ and $\xi$ were obtained.
In the present paper, we shall derive global bifurcation results for the cooperative and the competition case. Depending on $\eta$ and $\xi$ we shall give a rather complete description of positive {\it coexistence solutions}, that is, of solutions $(u,v)$ with both components $u$ and $v$ positive and nontrivial. Moreover, we shall also improve the local bifurcation result \cite[Thm.2.4]{WalkerJRAM} to a global one.

We like to point out that variants of the elliptic counterparts to \eqref{1}-\eqref{2} when age-structure is neglected from the outset have been intensively studied in the past, e.g. see \cite{BlatBrown1,BlatBrown2,CantrellCosner,CasalEilbeckLG,CosnerLazer,DancerTAMS84,DancerJDE85,DancerLGOrtega,Leung,LopezGomezPardo,LopezGomezChapman,LopezGomezMolinaJDE06,LopezGomezJDE09,SchiaffinoTesei,ZhouPao}. Concerning equations for a single specie, e.g. variants of \eqref{1} subject to \eqref{3}, we refer to \cite{DelgadoEtAl,DelgadoEtAl2,WalkerSIMA,WalkerJDE,WalkerArchMath,WalkerAMPA}.\\

To state our results for the present situation, we shall keep $\xi$ fixed and regard $\eta$ as bifurcation parameter in the following. We thus write $(\eta,u,v)$ for solutions to \eqref{1}-\eqref{4} with $u$, $v$ belonging to the positive cone $\Wq^+$ of
$$
\Wq:=L_q((0,a_m),\Wqb^2(\Om))\cap W_q^1((0,a_m),L_q(\Om))
$$
for $q>n+2$ fixed, but remark that all our solutions will have smooth components $u$, $v$ with respect to both $a\in J$ and $x\in\Om$. We say that a continuum $\mathfrak{C}$ (i.e. a closed and connected set) in $\R^+\times\Wq^+\times\Wq^+$ of solutions $(\eta,u,v)$ to \eqref{1}-\eqref{4} is {\it unbounded with respect to $\eta$}, provided the $\eta$-projection of $\mathfrak{C}$ contains an interval of the form $(\eta_0,\infty)$ with $\eta_0\in\R^+$, and we say that $\mathfrak{C}$ is {\it unbounded with respect to the $u$-component in $\Wq$} provided there is a sequence $((\eta_j,u_j,v_j))_{j\in\N}$ in $\mathfrak{C}$ with $\|u_j\|_{\Wq}\rightarrow \infty$ as $j\rightarrow\infty$. An analogous terminology shall be used if $\mathfrak{C}$ is unbounded with respect to the $v$-component.\\

Clearly, problem \eqref{1}-\eqref{4} decouples when taking either $u$ or $v$ to be zero. Noticing that Theorem~\ref{A1} from the appendix provides for each $\eta>1$ a unique solution $u_\eta\in\Wq^+\setminus\{0\}$ to
\bqn\label{Aee1}
\partial_a u-\Delta_D u=-\alpha_1u^2\ ,\quad u(0,\cdot)=\eta\int_0^{a_m}b_1(a)\, u(a,\cdot)\,\rd a\ ,
\eqn
and similarly for each $\xi>1$ a unique solution $v_\xi\in\Wq^+\setminus\{0\}$ to
\bqn\label{Aee2}
\partial_a v-\Delta_D v=-\beta_1 v^2\ ,\quad v(0,\cdot)=\xi\int_0^{a_m}b_2(a)\, v(a,\cdot)\,\rd a\ ,
\eqn
there is, independent of what the signs $\pm$ in \eqref{1}, \eqref{2} are, for any $\xi\ge 0$ the trivial branch
$$
\mathfrak{B}_0:=\{(\eta,0,0)\,;\, \eta\ge 0\}
$$
and the semi-trivial branch
\bqn\label{k1}
\mathfrak{B}_1:=\{(\eta,u_\eta,0)\,;\, \eta> 1\}\subset\R^+\times(\Wq^+\setminus\{0\})\times\Wq^+
\eqn
of solutions. For $\xi>1$, an additional semi-trivial branch
\bqn\label{k2}
\mathfrak{B}_2:=\{(\eta,0,v_\xi)\,;\, \eta\ge 0\}\subset\R^+\times\Wq^+\times(\Wq^+\setminus\{0\})\ 
\eqn
exists. Depending on the signs $\pm$ in \eqref{1}, \eqref{2} we shall establish further local and global bifurcation of coexistence solutions from these semi-trivial branches.

\subsection{Cooperative Systems}\label{sec 1b}

We first consider the cooperative case
\begin{align}
\partial_a u-\Delta_D u&=-\alpha_1u^2+\alpha_2 vu\ ,\quad  a\in (0,a_m)\ ,\quad x\in\Om\ ,\label{1coo}\\
\partial_a v-\Delta_D v&=-\beta_1 v^2+\beta_2 u v\ ,\,\quad  a\in (0,a_m)\ ,\quad x\in\Om\ ,\label{2coo}
\end{align}
subject to the nonlocal initial conditions
\begin{align*}
u(0,x)=\eta\int_0^{a_m}b_1(a)\, u(a,x)\, \rd a\ , \quad   
v(0,x)=\xi\int_0^{a_m}b_2(a)\, v(a,x)\, \rd a\ ,  \qquad x\in\Om\ .
\end{align*} 
Recall that $\mathfrak{B}_1$ is the only semi-trivial branch of solutions to \eqref{1coo}-\eqref{2coo} if $\xi<1$.
Thus, if $\xi<1$ we shall derive bifurcation with respect to the parameter $\eta$ from the semi-trivial branch $\mathfrak{B}_1$ consisting of solutions of the form $(\eta,u_\eta,0)$. The case $\xi<1$ is therefore more involved than the case $\xi>1$ where we shall establish a bifurcation from the semi-trivial branch $\mathfrak{B}_2$ consisting of solutions of the form $(\eta,0,v_\xi)$.

\begin{thm}\label{T2}
There is $\nu\in [0,1)$ with the property that for each $\xi\in (\nu,1)$ there exists $\eta_0:=\eta_0(\xi)>1$ such that $(\eta_0,u_{\eta_0},0)\in\mathfrak{B}_1$ is a bifurcation point. There is an unbounded continuum $\mathfrak{C}_1$ of coexistence solutions $(\eta,u,v)$ in $\R^+\times(\Wq^+\setminus\{0\})\times (\Wq^+\setminus\{0\})$ to \eqref{1coo}-\eqref{2coo} subject to \eqref{3}-\eqref{4} emanating from $(\eta_0,u_{\eta_0},0)$. Near the branch $\mathfrak{B}_1$, the continuum $\mathfrak{C}_1$ is a continuous curve. There is no other bifurcation point on $\mathfrak{B}_1$ to positive coexistence solutions.
\end{thm}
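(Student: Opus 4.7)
My plan is to recast \eqref{1coo}--\eqref{2coo} subject to \eqref{3}--\eqref{4} as an abstract smooth equation $F(\eta,u,v)=0$ on suitable subspaces of $\R^+\times\Wq\times\Wq$, relying on parabolic maximal regularity and on the embedding of $\Wq$ into continuous functions guaranteed by $q>n+2$. Smoothness of $\eta\mapsto u_\eta$ and of the branch $\B_1$ follows from Theorem~\ref{A1} combined with the implicit function theorem applied to \eqref{Aee1}. Linearizing $F$ in $(u,v)$ at $(u_\eta,0)$, the $u$-block is the linearization of \eqref{Aee1} at $u_\eta$ and is invertible because $u_\eta$ is a non-degenerate zero of the single-specie problem; detection of bifurcation from $\B_1$ therefore reduces to the decoupled $v$-block
\begin{equation*}
\partial_a\psi-\Delta_D\psi=\beta_2\,u_\eta\,\psi,\qquad \psi(0,\cdot)=\xi\int_0^{a_m}b_2(a)\,\psi(a,\cdot)\,\rd a.
\end{equation*}
Solving the parabolic Cauchy problem with trace $\psi(0,\cdot)=h\in\Wqq$ and setting $\mk_\eta h := \int_0^{a_m}b_2(a)\,\psi(a,\cdot)\,\rd a$ produces a compact, strongly positive operator on $\Wqq$, and the Krein--Rutman theorem yields a simple principal eigenvalue $r(\eta)>0$ with strictly positive eigenvector. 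A bifurcation point on $\B_1$ at parameter $\eta$ is then equivalent to $\xi\,r(\eta)=1$.

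The next step I would take is to analyze the map $\eta\mapsto r(\eta)$. Since $u_\eta\to 0$ as $\eta\searrow 1$, the operator $\mk_\eta$ degenerates to $h\mapsto\int_0^{a_m}b_2(a)\,e^{a\Delta_D}h\,\rd a$, whose principal eigenvalue equals $1$ by the normalization \eqref{6}; hence $r(1^+)=1$. Strict monotonicity of $\eta\mapsto u_\eta$ in the logistic subproblem \eqref{Aee1}, combined with the positivity of $b_2$ near $a_m$ in \eqref{5} and strong positivity of the parabolic solution operator, lifts to strict ordering of the operators $\mk_\eta$, and hence by Krein--Rutman to strict monotonicity of $r$. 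Setting $\nu:=1/\lim_{\eta\to\infty}r(\eta)\in[0,1)$, the equation $\xi\,r(\eta)=1$ admits for each $\xi\in(\nu,1)$ a unique solution $\eta_0=\eta_0(\xi)>1$; this is the announced bifurcation point and also proves the ``no other bifurcation point on $\B_1$'' assertion.

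Local bifurcation at $(\eta_0,u_{\eta_0},0)$ would then follow from the Crandall--Rabinowitz theorem: one-dimensionality of the kernel comes from the simplicity of $r(\eta_0)$, the codim-$1$ Fredholm property of the range comes from compactness of $\mk_\eta$ and invertibility of the $u$-block, and transversality reduces to $\frac{\rd}{\rd\eta}r(\eta_0)\neq 0$, guaranteed by the strict monotonicity above. The sign of the principal eigenfunction and the cooperative structure force the bifurcating curve into $\R^+\times(\Wq^+\setminus\{0\})\times(\Wq^+\setminus\{0\})$. A Rabinowitz-type global continuation then extends this to a closed, connected continuum $\mathfrak{C}_1$ that is either unbounded in $\R^+\times\Wq^+\times\Wq^+$ or returns to $\B_0\cup\B_1\cup\B_2$. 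The branch $\B_2$ is absent for $\xi<1$, return to $\B_1$ at a different point is excluded by the uniqueness of $\eta_0$, and return to $\B_0$ is excluded because the only candidate bifurcation point on $\B_0$ for $\xi<1$ is $(1,0,0)$, where the singular direction is the pure $u$-direction and thus produces $\B_1$ rather than a coexistence continuum. Hence $\mathfrak{C}_1$ is unbounded.

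The main obstacles I anticipate are the strict monotonicity $r'(\eta)>0$ and the clean exclusion of the Rabinowitz alternatives. Upgrading strict pointwise ordering of $u_\eta$ to strict ordering of the Krein--Rutman eigenvalues requires the strong positivity of the parabolic solution operator together with the assumption that $b_2>0$ near $a_m$ from \eqref{5}, in order to prevent the principal eigenfunction from being insensitive to the perturbation $\beta_2(u_{\eta_2}-u_{\eta_1})$. For the global part, cooperative systems do not carry natural $L^\infty$-bounds on coexistence states, so ruling out that $\mathfrak{C}_1$ re-enters $\B_0\cup\B_1$ before escaping to infinity must be done via a-priori positivity bounds obtained by using $u_\eta$ as a subsolution of the $u$-equation together with supersolutions of the linear $v$-equation built from $u$; this positivity/compactness analysis is the most technical part of the argument.
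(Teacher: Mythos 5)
Your plan follows essentially the same route as the paper: the same spectral characterization of the bifurcation point via $\xi\, r\big(\hat H_{[-\beta_2 u_\eta]}\big)=1$, the same definition of $\nu$ as an inverse limit of spectral radii, a Crandall--Rabinowitz local bifurcation, then a Rabinowitz / L\'opez-G\'omez type global continuation. Two remarks on the points where your implementation diverges from or is thinner than the paper's.

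First, regarding transversality: you reduce the Crandall--Rabinowitz transversality condition to $\tfrac{\rd}{\rd\eta}r(\eta_0)\neq 0$, i.e.\ to analytic perturbation of the Krein--Rutman eigenvalue. This is a legitimate alternative and is consistent with the analyticity of $\eta\mapsto u_\eta$ from Theorem~\ref{A1}. The paper instead verifies $F_{\eta,(w,v)}(\eta_0,0,0)(\phi_*,\psi_*)\notin\mathrm{rg}\big(F_{(w,v)}(\eta_0,0,0)\big)$ directly by a shift argument (choosing $\tau$ so that $\tau\Psi_0-v(0)\in\mathrm{int}(\Wqqp)$) and then invoking the Krein--Rutman nonexistence statement; both arguments ultimately rest on the strict monotonicity of $r(\hat H_{[-\beta_2 u_\eta]})$ in $\eta$ through $\partial_\eta u_\eta>0$, which you correctly identify. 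What the $r'$-route buys you is a cleaner heuristic; what it still owes you is the explicit link between perturbation of the eigenvalue of the \emph{scalar} operator $\hat H_{[-\beta_2 u_\eta]}$ and the transversality condition for the \emph{coupled} linearization --- the paper's direct argument avoids having to make that link precise. Also note that the index-change needed for Rabinowitz's global theorem requires algebraic (not just geometric) simplicity of $1$ as an eigenvalue of the full $2\times 2$ compact operator $K(\eta_0)$; the paper establishes this separately in Lemma~\ref{L8}, and you would need to do so too before invoking the global alternative.

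Second, regarding the global exclusion: your reasoning ``return to $\B_1$ at a different point is excluded by the uniqueness of $\eta_0$'' conflates the Rabinowitz alternative (another characteristic value of $K(\eta)$) with the genuinely delicate issue of whether the continuum $\mathfrak C_1'$ of abstract solutions in $\R\times\Wq\times\Wq$ leaves the positive cone $\R^+\times\Wqd\times\Wqd$. Ruling out the latter is not accomplished by a-priori subsolution bounds alone. The paper's Lemma~\ref{L33} does it by a Blat--Brown normalization argument: if $(\eta_j,u_j,v_j)\to(\eta,u_\eta,0)$ along the continuum, then after setting $w_j:=u_j-u_{\eta_j}\ge 0$ and normalizing $(w_j,v_j)$ one extracts, by compactness of $K(\eta)$, a nonzero positive eigenvector of $K(\eta)$ to the eigenvalue $1$, which (via Lemma~\ref{L6} and Krein--Rutman) forces $\xi r(\hat H_{[-\beta_2 u_\eta]})=1$, hence $\eta=\eta_0$. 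A similar positive-cone argument handles the unilateral alternative where the continuum would contain a direction in $\mathrm{rg}(1-K(\eta_0))$. You correctly flag this as the most technical part, but the mechanism you name (a-priori positivity bounds from sub/supersolutions) is not, by itself, the tool the proof actually uses; the key step is the approximation--normalization--Krein--Rutman argument just described.
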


The precise values of $\nu$ and of $\eta_0(\xi)>1$ are related to spectral radii of some operators and are given in \eqref{nu} and \eqref{xi1}, respectively. Actually, we conjecture that $\nu=0$, see Remark~\ref{R32}.\\

If $\xi>1$, then a global continuum of coexistence solutions emanates from the branch $\mathfrak{B}_2$:

\begin{thm}\label{T1}
Given $\xi>1$, there is $\eta_1:=\eta_1(\xi)\in (0,1)$ such that $(\eta_1,0,v_\xi)\in\mathfrak{B}_2$ is a bifurcation point. An unbounded continuum $\mathfrak{C}_2$ of coexistence solutions $(\eta,u,v)$ in $\R^+\times(\Wq^+\setminus\{0\})\times (\Wq^+\setminus\{0\})$ to \eqref{1coo}-\eqref{2coo} subject to \eqref{3}-\eqref{4} emanates from $\big(\eta_1,0,v_\xi\big)$. Near the branch $\mathfrak{B}_2$, the continuum $\mathfrak{C}_2$ is a continuous curve. There is no other bifurcation point on $\mathfrak{B}_2$ or on $\mathfrak{B}_1$ to positive coexistence solutions.
\end{thm}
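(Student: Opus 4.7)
The plan is to recast the system \eqref{1coo}--\eqref{4} as an abstract equation $F(\eta, u, v) = 0$ between the Banach spaces of the paper and to apply Crandall--Rabinowitz local bifurcation combined with Rabinowitz's global alternative at a suitable point on $\mathfrak{B}_2$, mirroring the strategy used for Theorem~\ref{T2}. Since $\xi > 1$ is fixed, the positive function $v_\xi \in \Wq^+ \setminus \{0\}$ furnished by the appendix is strictly positive on the interior of $(0, a_m) \times \Om$, and $\mathfrak{B}_2$ is a smooth trivial branch independent of $\eta$.

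The Fréchet derivative $\partial_{(u,v)} F(\eta, 0, v_\xi)$ is triangular: acting on $(\phi, \psi)$, its $u$-component decouples as
\[
\partial_a \phi - \Delta_D \phi - \alpha_2 v_\xi \phi = 0, \quad \phi(0, \cdot) = \eta \int_0^{a_m} b_1(a) \phi(a, \cdot) \, \rd a,
\]
while the $v$-component yields an inhomogeneous equation of the form $G'(v_\xi) \psi = \beta_2 v_\xi \phi$, where $G'(v_\xi)$ is the linearization of the scalar single-species map at its unique positive steady state. The operator $G'(v_\xi)$ is an isomorphism by the uniqueness/nondegeneracy statement from the appendix, so the kernel of $\partial_{(u,v)} F$ is controlled entirely by the $\phi$-equation. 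Writing the latter as the eigenvalue problem $\eta K_\xi h = h$ on $L_q(\Om)$, with the compact, strongly positive operator
\[
K_\xi h := \int_0^{a_m} b_1(a)\, \Pi_\xi(a, 0) h \, \rd a,
\]
where $\Pi_\xi(\cdot, \cdot)$ is the parabolic evolution generated by $\Delta_D + \alpha_2 v_\xi$, the Krein--Rutman theorem provides a simple principal eigenvalue $r(K_\xi) > 0$ with a strictly positive eigenfunction $\phi_0$. I would then set $\eta_1(\xi) := 1/r(K_\xi)$. Since $\alpha_2 v_\xi > 0$ and the normalization \eqref{6} is equivalent to $r(K_0) = 1$, strict monotonicity of the principal eigenvalue with respect to a positive potential forces $r(K_\xi) > 1$, so $\eta_1(\xi) \in (0, 1)$.

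Next I would verify the remaining Crandall--Rabinowitz hypotheses at $(\eta_1, 0, v_\xi)$: one-dimensional kernel (spanned by $(\phi_0, G'(v_\xi)^{-1}(\beta_2 v_\xi \phi_0))$), closed range of codimension one (a Fredholm consequence of compactness of $K_\xi$ in the maximal regularity framework underlying $\Wq$), and the transversality condition, which reduces to a non-vanishing pairing of the adjoint Krein--Rutman eigenfunction against the $\eta$-derivative of the constraint along $\phi_0$ and is guaranteed by strict positivity. This delivers the local analytic curve of coexistence solutions and thus the claim that $\mathfrak{C}_2$ is a continuous curve near $\mathfrak{B}_2$. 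Rabinowitz's global alternative, in its unilateral positive-cone version, then extends $\mathfrak{C}_2$ either to an unbounded continuum or to one returning to another bifurcation point on $\mathfrak{B}_0 \cup \mathfrak{B}_1 \cup \mathfrak{B}_2$. Uniqueness of the bifurcation point on $\mathfrak{B}_2$ follows from simplicity of $r(K_\xi)$. On $\mathfrak{B}_1$ one performs the analogous linearization in the $v$-direction at $(\eta, u_\eta, 0)$ with $\eta > 1$, obtaining an operator $M_\eta$ built from $\Delta_D + \beta_2 u_\eta$ whose spectral radius satisfies $r(M_\eta) > r(M_0) = 1 > 1/\xi$; hence $\xi r(M_\eta) \neq 1$ for every $\eta > 1$ and no bifurcation point on $\mathfrak{B}_1$ exists. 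Intersection with $\mathfrak{B}_0$ at finite $\eta$ is excluded by the strict positivity of the $v$-component, which is enforced along the continuum by $v_\xi > 0$.

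The principal obstacle will be the global step, namely ensuring that $\mathfrak{C}_2$ remains in the positive coexistence cone so that the escape route is unboundedness rather than return to a trivial or semi-trivial branch. The cooperative structure of \eqref{1coo}--\eqref{2coo} preserves positivity along the continuum, but in this nonlocal age-diffusion setting the verification relies on a careful application of Krein--Rutman and strong-maximum-principle arguments to the triangular linearization together with a priori bounds of the type already developed for Theorem~\ref{T2}.
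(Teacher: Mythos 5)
Your proposal follows essentially the same route as the paper's: the same bifurcation value $\eta_1 = 1/r(H_{[-\alpha_2 v_\xi]})$, the triangular linearization at $(\eta_1,0,v_\xi)$ feeding Crandall--Rabinowitz via Krein--Rutman, the L\'opez-G\'omez/Rabinowitz unilateral global alternative, and exclusion of $\mathfrak{B}_1$-bifurcation points by spectral-radius monotonicity. The only cosmetic differences are in how transversality is stated (adjoint pairing vs.\ the paper's direct argument that the kernel vector lies outside the range) and in the level of detail for the positivity-preservation step, which the paper settles by a comparison argument showing $v \geq v_\xi$ along the branch.
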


The precise value of $\eta_1(\xi)$ is given in \eqref{17}. 
We can give a more specific characterization of the global nature of the the continua $\mathfrak{C}_1$ and $\mathfrak{C}_2$:

\begin{cor}\label{C0}
The global continua $\mathfrak{C}_1$ and $\mathfrak{C}_2$ provided by Theorem~\ref{T2} and Theorem~\ref{T1}, respectively, are unbounded with respect to both the parameter $\eta$ and the $u$-component in $\Wq$, or with respect to the $v$-component in $\Wq$. If 
\bqn\label{B}
b_2\in L_1((0,a_m),(1-e^{-sa})^{-1}\rd a)
\eqn
for some $s>0$, then they are unbounded with respect to the $u$-component in $\Wq$. 
\end{cor}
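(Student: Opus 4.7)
The plan is to combine a global Rabinowitz--Dancer bifurcation alternative with cooperative a priori bounds. First I would recast \eqref{1coo}--\eqref{2coo} subject to \eqref{3}--\eqref{4} as a fixed-point problem $(u,v)=\Phi(\eta,u,v)$ in $\Wq\times\Wq$ with $\Phi(\eta,\cdot,\cdot)$ a smooth compact operator, the compactness following from the maximal $L_q$-regularity of $\partial_a-\Delta_D$ (see~\cite{WalkerJRAM}). Since Theorems~\ref{T2} and~\ref{T1} explicitly rule out any further bifurcation points on $\mathfrak{B}_1\cup\mathfrak{B}_2$, the global alternative forces each continuum $\mathfrak{C}_i$ to be unbounded in the product space $\R^+\times\Wq\times\Wq$. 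Thus there is a sequence $(\eta_j,u_j,v_j)\in\mathfrak{C}_i$ with $\eta_j+\|u_j\|_{\Wq}+\|v_j\|_{\Wq}\to\infty$.

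To turn this raw unboundedness into the stated dichotomy, I would argue that if $(v_j)$ is bounded in $\Wq$, then both $(\eta_j)$ and $(u_j)$ must be unbounded. Since $q>n+2$, the embedding $\Wq\hookrightarrow C([0,a_m]\times\bar\Om)$ yields $\|v_j\|_\infty\le C$; the $u$-equation
\[
\partial_a u_j-\Delta_D u_j+\alpha_1 u_j^2=\alpha_2 v_j u_j
\]
then has a bounded linear source, so a super-solution constructed via Theorem~\ref{A1} applied to the scalar logistic equation with a modified birth coefficient gives $\|u_j\|_{\Wq}\le C(\eta^\ast,\|v_j\|_\infty)$ whenever $\eta_j\le\eta^\ast$, contradicting unboundedness. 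Hence $\eta_j\to\infty$. For the $u$-component I would exploit the cooperative structure to compare $u_j$ with the semi-trivial $u_{\eta_j}$: the extra source $\alpha_2 v_j u_j$ is nonnegative, so a minimum-principle argument adapted to the nonlocal condition \eqref{3} yields $u_j\ge u_{\eta_j}$, and since $\|u_{\eta_j}\|_{\Wq}\to\infty$ as $\eta_j\to\infty$ (readable off the formula in Theorem~\ref{A1}), $\|u_j\|_{\Wq}\to\infty$ follows. This establishes the main dichotomy.

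The sharper assertion under \eqref{B} would come from an a priori $\Wq$-bound on $v$ independent of $u$. The mechanism I have in mind is that the ODE envelope $\dot w+\lambda_1 w+\beta_1 w^2=0$ satisfies, for large initial datum, an upper profile proportional to $(1-e^{-\lambda_1 a})^{-1}$ that is independent of the initial size; integrating this universal profile against $b_2$ in the nonlocal birth condition \eqref{4} requires precisely the integrability \eqref{B} to produce a bound on $\|v(0,\cdot)\|_\infty$ independent of $\|u\|_\infty$, from which maximal regularity upgrades to a $\Wq$-bound. Consequently $v$ cannot carry the unboundedness and by the previous step the $u$-component does. The main obstacle lies in this last step: isolating the saturation envelope for $v$, verifying that the linear-in-$u$ source $\beta_2 uv$ does not disrupt the universal upper profile, and matching the weight $(1-e^{-sa})^{-1}$ with \eqref{B} demands a spectral analysis specific to the age-structure that goes beyond generic parabolic comparison.
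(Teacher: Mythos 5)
Your first paragraph and the treatment of the dichotomy are essentially the paper's argument: the comparison $u\ge u_\eta$ (obtained because the extra source $\alpha_2 v u\ge 0$, via the nonlocal comparison principle of Lemma~\ref{A3}) combined with $\|u_\eta(0)\|_\infty\to\infty$ as $\eta\to\infty$ is exactly how the paper shows that unboundedness in $\eta$ forces unboundedness of the $u$-component. The extra ``super-solution via a modified logistic'' step you sketch for showing $\eta_j\to\infty$ is not needed in the paper's formulation, which instead contraposes: assuming a uniform $L_\infty$-bound on $u$ forces $\eta$ bounded by $u\ge u_\eta$, and then Lemma~\ref{L00} bounds everything in $\Wq$, contradicting global unboundedness.

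The genuine gap is in the last paragraph, the crucial step under hypothesis \eqref{B}. The envelope ODE you propose, $\dot w+\lambda_1 w+\beta_1 w^2=0$, has the wrong sign on the linear term: its solutions \emph{decay} from their initial datum, so there is no universal upper profile of the form $(1-e^{-\lambda_1 a})^{-1}$ that is independent of the initial size. The correct comparison comes from assuming $\|u(a)\|_\infty\le M$ with $M>s/\beta_2$ and observing $\partial_a v-\Delta_D v\le -\beta_1 v^2+\beta_2 M v$, whose spatially constant super-solution solves
\begin{equation*}
f'=-\beta_1 f^2+m f,\qquad m:=\beta_2 M>s,\qquad f(0)=\|v(0)\|_\infty.
\end{equation*}
It is the \emph{positive} linear growth $+mf$ that produces a backward-in-age blow-up and hence the datum-independent envelope $f(a)\le m/(\beta_1(1-e^{-ma}))\le (m/\beta_1)(1-e^{-sa})^{-1}$, which is exactly what feeds through the nonlocal condition \eqref{4} and makes \eqref{B} the right integrability hypothesis. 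Your proposed envelope would only tie the weight to $\lambda_1$ rather than to an arbitrary $s>0$, and, more seriously, with the sign you wrote down there is no saturation phenomenon at all. You already flagged this step as the weak point; the fix is not a spectral refinement but simply using the correct scalar comparison ODE with the coefficient $\beta_2 M$, as above, which is the paper's argument in Corollary~\ref{L3}.
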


\subsection{Competing Systems}\label{sec 1c}

Next we consider the case of two competing species:
\begin{align}
\partial_a u-\Delta_D u&=-\alpha_1u^2-\alpha_2 vu\ ,\quad  a\in (0,a_m)\ ,\quad x\in\Om\ ,\label{1comp}\\
\partial_a v-\Delta_D v&=-\beta_1 v^2-\beta_2 u v\ ,\,\quad  a\in (0,a_m)\ ,\quad x\in\Om\ ,\label{2comp}
\end{align}
subject to the initial conditions
\begin{align*}
u(0,x)=\eta\int_0^{a_m}b_1(a)\, u(a,x)\, \rd a\ , \quad   
v(0,x)=\xi\int_0^{a_m}b_2(a)\, v(a,x)\, \rd a\ ,  \qquad x\in\Om\ .
\end{align*} 
The following theorem characterizes the competition coexistence solutions.

\begin{thm}\label{T3}
If $\xi\le 1$, then there is no coexistence solution $(\eta,u,v)\in\R^+\times(\Wq^+\setminus\{0\})\times(\Wq^+\setminus\{0\})$ to \eqref{1comp}-\eqref{2comp} subject to \eqref{3}-\eqref{4}. If $\xi>1$, then there is $\eta_2:=\eta_2(\xi)>1$ such that $\big(\eta_2,0,v_{\xi}\big)\in\mathfrak{B}_2$ is a bifurcation point. A continuum $\mathfrak{C}_3$ of positive coexistence solutions in $\R^+\times(\Wq^+\setminus\{0\})\times (\Wq^+\setminus\{0\})$ emanates from $\big(\eta_2,0,v_{\xi}\big)$ satisfying the alternative
\begin{itemize}
\item[(a)] $\mathfrak{C}_3$ joins $\mathfrak{B}_2$ with $\mathfrak{B}_1$, or
\item[(b)] $\mathfrak{C}_3$ is unbounded with respect to the parameter $\eta$,
\end{itemize}
and near the bifurcation point $\big(\eta_2,0,v_{\xi}\big)$, the continuum $\mathfrak{C}_3$ is a continuous curve.
There exists some $N>1$ such that alternative (a) occurs for each $\xi\in (1,N)$. Moreover, if
\bqn\label{dominated}
\beta_2\ge \alpha_1\ ,\quad \beta_1\ge\alpha_2\ ,\quad b_1\ge b_2\ \text{on}\ (0,a_m)\ ,
\eqn
then the $\eta$-projection of $\mathfrak{C}_3$ is contained in the interval $(1,\xi]$; in particular, alternative (a) occurs for each~$\xi>1$.
\end{thm}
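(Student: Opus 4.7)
The theorem has four assertions and I would tackle them in order. For the non-existence when $\xi\le 1$, I would test \eqref{2comp} against the principal Dirichlet eigenfunction $\vp_1$ of $-\Delta_D$ (with eigenvalue $\lambda_1$) and set $V(a):=\int_\Om v(a,\cdot)\vp_1\,\rd x$; then $\partial_a V+\lambda_1 V\le -\beta_2\int_\Om uv\vp_1\,\rd x$, with strict inequality on a positive-measure set of ages whenever $u,v\not\equiv 0$. Integrating the linear inequality and using $V(0)=\xi\int_0^{a_m}b_2 V\,\rd a$ together with the normalization \eqref{6} yields $(1-\xi)V(0)<0$, which is impossible for $\xi\le 1$.

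To locate the bifurcation point on $\mathfrak{B}_2$, I would linearize \eqref{1comp} at $(0,v_\xi)$ in the $u$-direction, obtaining the eigenproblem $\partial_a u-\Delta_D u=-\alpha_2 v_\xi u$, $u(0,\cdot)=\eta\int_0^{a_m} b_1 u\,\rd a$. The associated solution map is a compact strongly positive operator $K_\xi$ on $\Wqq$, and Krein-Rutman supplies a unique simple positive dominant eigenvalue $r(\xi)\in(0,1)$ (strictly below $1$ because $v_\xi>0$ adds a strictly positive absorption relative to the normalized case \eqref{6}). Setting $\eta_2(\xi):=1/r(\xi)>1$ and applying Crandall-Rabinowitz produces a local smooth curve of positive coexistence solutions through $(\eta_2,0,v_\xi)$.

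To extend this local curve to a global continuum $\mathfrak{C}_3$ I would invoke the unilateral Rabinowitz theorem in the positive cone $\Wq^+\times\Wq^+$. The standard four-fold alternative collapses to the stated (a)-(b) dichotomy as follows: $\mathfrak{C}_3$ cannot revisit $\mathfrak{B}_2$ by simplicity of the Krein-Rutman eigenvalue; it cannot meet $\mathfrak{B}_0$ since $\xi>1$ separates $\mathfrak{B}_2$ from $\mathfrak{B}_0$ in the $v$-component; and the comparison estimates $u\le u_\eta$, $v\le v_\xi$, obtained by noting that the extra negative terms in \eqref{1comp}-\eqref{2comp} render $u$ and $v$ subsolutions of \eqref{Aee1}, \eqref{Aee2}, combined with the maximal regularity bounds from the appendix, force any norm-unbounded sequence on $\mathfrak{C}_3$ to have unbounded $\eta$.

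For alternative (a) near $\xi=1$, the idea is that $v_\xi\to 0$ in $\Wq$ as $\xi\downarrow 1$ gives $\eta_2(\xi)\to 1$, and a symmetric linearization at $(u_\eta,0)$ yields a companion bifurcation point $\eta_2'(\xi)\in\mathfrak{B}_1$ with $\eta_2'(\xi)\to 1$ as well; a Leray-Schauder degree computation on a $\xi$-uniform bounded box in $(\eta,u,v)$ then forces $\mathfrak{C}_3$ to terminate on $\mathfrak{B}_1$ for $\xi$ in some right-neighborhood $(1,N)$ of $1$. Rigorously isolating $N$ and excluding secondary bifurcations that might spoil this index count is the main obstacle of the theorem. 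The dominance hypothesis \eqref{dominated} bypasses this by a direct argument: from $\alpha_1\le\beta_2$ and $\alpha_2\le\beta_1$ one has $\partial_a u-\Delta_D u\ge -u(\beta_2 u+\beta_1 v)$ and $\partial_a v-\Delta_D v=-v(\beta_2 u+\beta_1 v)$, so $u$ is a supersolution and $v$ a solution of the same linear age-diffusion equation with common absorption $P:=\beta_2 u+\beta_1 v$; if $\eta>\xi$ and $b_1\ge b_2$ then $\eta b_1\ge \xi b_2$, and a Krein-Rutman comparison through the positive adjoint eigenfunction of this common linear operator forces $v\equiv 0$, contradicting coexistence. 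Hence $\eta\le\xi$ on $\mathfrak{C}_3$, which together with the lower bound $\eta>1$ needed for $u_\eta$ to exist rules out (b) for every $\xi>1$.
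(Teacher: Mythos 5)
Your treatment of the first three assertions (non-existence for $\xi\le 1$ by pairing against $\varphi_1$, locating $\eta_2(\xi)=1/r(H_{[\alpha_2 v_\xi]})$ via Krein--Rutman and Crandall--Rabinowitz, and the global $(a)$--$(b)$ dichotomy from the unilateral Rabinowitz theorem together with the a priori bound $u\le u_\eta$, $v\le v_\xi$ and maximal regularity) follows the same route as the paper and is essentially correct. Your argument under hypothesis \eqref{dominated} is phrased differently but is equivalent: you push both $u$ and $v$ into the single linear problem with absorption $P=\beta_2 u+\beta_1 v$ and compare via a supersolution/solution Krein--Rutman argument, whereas the paper directly compares $r(H_{[\alpha_1 u+\alpha_2 v]})\ge r(\hat H_{[\beta_1 v+\beta_2 u]})$ using \eqref{sp1}, \eqref{sp2}; both give $1=\xi r(\hat H_{[\beta_1 v+\beta_2 u]})\le \xi/\eta$, hence $\eta\le\xi$, which with Lemma~\ref{L10} rules out~(b).

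The real gap is in the claim that alternative (a) holds for all $\xi\in(1,N)$: what you offer there is a Leray--Schauder degree count on a $\xi$-uniform box, and you yourself concede that ``rigorously isolating $N$ and excluding secondary bifurcations'' is unresolved. That concession is precisely the missing step. To run a box-degree argument you need an a priori bound on the $\eta$-projection of $\mathfrak{C}_3$ that is uniform for $\xi$ near $1$, and nothing in your proposal supplies it --- the bounds $u\le u_\eta$, $v\le v_\xi$ only give norm control once $\eta$ is bounded, not the other way around. The paper closes this hole by a \emph{lower} bound: it constructs an explicit function $z_\eta=f_\eta\varphi_1$ (Lemma~\ref{L66}) which is a subsolution of the first equation for every $(\eta,u,v)\in\mathfrak{C}_3$ with $\eta>m_0$ (using the already-established upper bound $v\le v_\xi$), so that $u\ge z_\eta$. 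From $u\ge z_\eta$ and \eqref{sp2} one obtains $1=\xi r(\hat H_{[\beta_1 v+\beta_2 u]})\le \xi r(\hat H_{[\beta_2 z_\eta]})$, and since the right-hand side is strictly decreasing in $\eta$ with limit $\xi/N$, this forces $\eta\le M(\xi)$ whenever $\xi<N$; boundedness then yields (a) via Lemma~\ref{L10} and Lemma~\ref{L55}. This is the argument you would need to either reproduce or genuinely replace; as written your proposal does not establish the $\xi\in(1,N)$ clause.

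One further minor remark: in the non-existence step you attribute the strict inequality to the interaction term $-\beta_2 uv$, requiring $u\not\equiv 0$; it is cleaner (and what the paper does in Lemma~\ref{L44}) to use the self-limiting term $-\beta_1 v^2$, which gives strictness for any $v\in\Wqd$ and hence proves non-existence of solutions in $\R^+\times\Wq^+\times\Wqd$, not merely non-existence of coexistence pairs.
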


The value of $\eta_2(\xi)$ as well as the value of $\eta_3:=\eta_3(\xi)$ corresponding to the point $\big(\eta_3,u_{\eta_3},0\big)\in\mathfrak{B}_1$ where $\mathfrak{C}_3$ joins up with $\mathfrak{B}_1$ if alternative (a) occurs, are determined exactly, see \eqref{xi2} and \eqref{xi3}. Actually, we conjecture $N=\infty$ even if \eqref{dominated} does not hold, see Remark~\ref{R42}. Observe that \eqref{dominated} implies a biological advantage of the specie with density $u$ due to a higher birth but lower death rate.

\subsection{Predator-Prey-Systems}\label{sec 1d}

The case of a predator-prey-system,
\begin{align}
\partial_a u-\Delta_D u&=-\alpha_1u^2-\alpha_2 vu\ ,\quad  a\in (0,a_m)\ ,\quad x\in\Om\ ,\label{1pp}\\
\partial_a v-\Delta_D v&=-\beta_1 v^2+\beta_2 u v\ ,\,\quad  a\in (0,a_m)\ ,\quad x\in\Om\ ,\label{2pp}
\end{align}
subject to the initial conditions
\begin{align*}
u(0,x)=\eta\int_0^{a_m}b_1(a)\, u(a,x)\, \rd a\ , \quad   
v(0,x)=\xi\int_0^{a_m}b_2(a)\, v(a,x)\, \rd a\ ,  \qquad x\in\Om\ ,
\end{align*} 
was  studied in detail in \cite{WalkerJRAM}. A quite complete description of the structure of positive solutions was provided when $\xi$ is regarded as bifurcation parameter and $\eta>0$ is kept fixed \cite[Thm.2.2]{WalkerJRAM} or when $\eta$ is regarded as bifurcation parameter and $\xi>1$ is kept fixed [Thm.2.3]. However, for the case $\xi<1$ being fixed with parameter $\eta$, a {\it local} bifurcation and thus a merely partial result was obtained in \cite[Thm.2.4]{WalkerJRAM}. More precisely, provided that $\xi\in (\delta,1)$ for some suitable \mbox{$\delta\in [0,1)$}, it was shown in \cite[Thm.2.4]{WalkerJRAM} that there are $\ve_0>0$ and a unique point $(\eta_4,u_{\eta_4},0)$ with $\eta_4:=\eta_4(\xi)>1$ on the semi-trivial branch $\mathfrak{B}_1$
such that a local continuous curve
$$
\mathfrak{C}_4:=\{(\eta(\ve),u(\ve),v(\ve))\,;\,0<\ve<\ve_0 \}
\subset \R^+\times (\Wq^+\setminus\{0\})\times(\Wq^+\setminus\{0\})
$$
of positive coexistence solutions bifurcates to the right from $(\eta_4,u_{\eta_4},0)$. Actually, this result can be improved:

\begin{thm}\label{T4}
The branch $\mathfrak{C}_4$ extends to an unbounded continuum in $\R^+\times (\Wq^+\setminus\{0\})\times(\Wq^+\setminus\{0\})$ of positive coexistence solutions to \eqref{1pp}-\eqref{2pp} subject to \eqref{3}-\eqref{4}. If \eqref{B} holds, then $\mathfrak{C}_4$ is unbounded with respect to the parameter $\eta$.
\end{thm}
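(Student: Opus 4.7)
I will extend the local curve $\mathfrak{C}_4$ of \cite[Thm.~2.4]{WalkerJRAM} by the same unilateral global bifurcation machinery (Rabinowitz--L\'opez-G\'omez) used for Theorems~\ref{T1}--\ref{T3}. First, recast \eqref{1pp}--\eqref{2pp} with \eqref{3}--\eqref{4} as a fixed point equation $(u,v)=\Phi(\eta,u,v)$ on $\Wq\times\Wq$ by inverting the linear parabolic operator with nonlocal-in-age boundary condition (the same solution operator underlying the construction of $u_\eta$ in Theorem~\ref{A1}); standard compactness makes $\Phi$ completely continuous. After the translation $(u,v)\mapsto(u-u_\eta,v)$, the branch $\mathfrak{B}_1$ becomes the axis $\{v=0\}$, the linearization in the $v$-direction decouples, and a Krein--Rutman argument applied to the resulting eigenvalue problem with coefficient $\beta_2 u_\eta$ identifies $\eta_4(\xi)$ as the unique crossing point on $\mathfrak{B}_1$, with simple eigenvalue and positive eigenfunction. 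This is precisely the setting in which the local curve of \cite[Thm.~2.4]{WalkerJRAM} is born.

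\textbf{Global continuum via the Rabinowitz alternative.} The unilateral version of Rabinowitz' global bifurcation theorem (as in \cite{LopezGomezChapman}) then produces a maximal continuum $\mathfrak{C}_4\subset\R^+\times(\Wq^+\setminus\{0\})\times(\Wq^+\setminus\{0\})$ containing the local curve, satisfying the alternative that either (i) $\mathfrak{C}_4$ is unbounded in $\R^+\times\Wq\times\Wq$, or (ii) $\mathfrak{C}_4$ meets $\mathfrak{B}_0\cup\mathfrak{B}_1$ at a point different from $(\eta_4,u_{\eta_4},0)$ (note that $\mathfrak{B}_2=\emptyset$ since $\xi<1$). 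Along a sequence $(\eta_n,u_n,v_n)\in\mathfrak{C}_4$ with $v_n\to 0$, normalizing $w_n:=v_n/\|v_n\|_{\Wq}$ and invoking compactness of $\Phi$ yields in the limit a positive eigenfunction of the linearized $v$-problem on $\mathfrak{B}_1$; uniqueness of the Krein--Rutman eigenpair forces the limit to coincide with $(\eta_4,u_{\eta_4},0)$. A parallel, simultaneous renormalization in the case $(u_n,v_n)\to(0,0)$, combined with the sharp normalization \eqref{6} (which pegs the critical spectral radius of the associated Perron-type problem to $1$ exactly at $\xi=1$), excludes $\xi<1$ as a possible limit. Thus alternative (i) holds and $\mathfrak{C}_4$ is unbounded.

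\textbf{A priori bounds under \eqref{B}.} Since $v\ge 0$, the prey equation \eqref{1pp} yields $\partial_a u-\Delta_D u\le -\alpha_1 u^2$ subject to \eqref{3}; the comparison principle for the nonlocal logistic problem \eqref{Aee1} implies $0\le u\le u_\eta$ pointwise on $(0,a_m)\times\Om$, so $\|u\|_{\Wq}\le C(\eta)$ uniformly on compact $\eta$-intervals. Substituting this into \eqref{2pp} gives $\partial_a v-\Delta_D v+\beta_1 v^2\le \beta_2 u_\eta v$; under hypothesis \eqref{B} a Krein--Rutman / spectral radius argument on the associated linear fixed point map (analogous to the one used in the proof of Corollary~\ref{C0}) then yields a uniform bound $\|v\|_{\Wq}\le C'(\eta)$ on compact $\eta$-intervals. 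Hence any unbounded sequence in $\mathfrak{C}_4$ must have $\eta_n\to\infty$, proving the second claim. The principal technical obstacle lies in Step~2: excluding the return of $\mathfrak{C}_4$ to $\mathfrak{B}_0$ requires a careful simultaneous renormalization argument for the two components that accommodates their (possibly different) rates of decay, and crucially exploits that the normalization \eqref{6} makes $\xi=1$ the critical threshold of the birth operator, so that $\xi<1$ rigorously forbids a nontrivial limiting eigenpair in the $v$-direction.
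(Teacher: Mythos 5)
Your overall plan matches what the paper in fact does: the paper omits this proof precisely because it is the proof of Theorem~\ref{T2} repeated with $\alpha_2$'s sign flipped (unilateral global bifurcation via Rabinowitz--L\'opez-G\'omez from $\mathfrak{B}_1$, decomposition through $w=u_\eta-u\ge 0$ rather than $w=u-u_\eta$, the same compact operator $K(\eta)$ since the linear parts of the two translated systems coincide, and the same spectral identification $\xi\,r(\hat H_{[-\beta_2 u_{\eta_4}]})=1$). Two of your descriptions are, however, off the mark. The operator being inverted is $T=(\partial_a-\Delta_D,\gamma_0)^{-1}$, the parabolic solution operator with the pointwise age-zero trace condition; the \emph{nonlocal} age condition is not absorbed into the operator but threaded through the fixed-point formulation, as in~\eqref{19t}. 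More substantively, the a priori bound on $v$ under~\eqref{B} in Corollary~\ref{C0}/\ref{L3} is \emph{not} a Krein--Rutman or spectral-radius argument on the linear map: it is an explicit comparison. The parabolic maximum principle dominates $v(a)$ by the logistic ODE solution $f(a)$ with $f(a)\sim c/a$ near $a=0$, and hypothesis~\eqref{B} is exactly what guarantees $\int_0^{a_m}b_2(a)f(a)\,\rd a<\infty$, hence an a priori bound on $v(0)=\xi V$. A spectral radius of $\hat H_{[\cdot]}$ would never see the role of~\eqref{B}, which is to tame the $a\downarrow 0$ singularity of the comparison function. Your deduction from there---$u\le u_\eta$ (note the inequality is reversed relative to Theorem~\ref{T2}), thus $u$ controlled on compact $\eta$-intervals, thus $v$ controlled by the comparison under~\eqref{B}, hence unboundedness forces $\eta\to\infty$---is then correct. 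Finally, the exclusion of $\mathfrak{B}_0$ as a limit point is simpler than your ``simultaneous renormalization'': if $(u_j,v_j)\to(0,0)$ with $v_j(0)\neq0$, then \eqref{sp2} and continuity of the spectral radius give $\xi\,r(\hat H_{[0]})=1$, i.e.\ $\xi=1$, contradicting $\xi<1$; no separate renormalization of the two components is needed.
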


The proof of this theorem is along the lines of the one of Theorem~\ref{T2} with only minor modifications necessary. We shall thus omit it here. \\

The outline of the present paper is as follows: In the next section, Section~\ref{sec 2}, notations and some preliminary results are introduced. In Section~\ref{sec 2b2}  a detailed proof of Theorem~\ref{T2} is provided so that the proof of Theorem~\ref{T1} in Section~\ref{sec 2b1} is basically a straightforward modification thereof and can thus be kept short. Section~\ref{sec 2c} is dedicated to the proof of Theorem~\ref{T3}.
Appendix~A contains some results regarding the semi-trivial branches induced by \eqref{Aee1}, \eqref{Aee2} which are of importance for the proofs in Sections~\ref{sec 2b2}-\ref{sec 2c}.

\section{Notations and Preliminaries}\label{sec 2}

Throughout we assume that $\Om$ is a bounded and smooth domain in $\R^n$. We fix $q> n+2$ and set, for $\kappa>1/q$,
$$
 \Wqb^\kappa:=\Wqb^\kappa (\Om):=\{u\in W_q^\kappa; u=0\ \text{on}\ \partial\Om\}\ ,
 $$
where $W_q^\kappa:=W_q^\kappa (\Om)$ stands for the usual Sobolev-Slobodeckii spaces (for arbitrary $\kappa>0$) and values on the boundary are interpreted in the sense of traces. Recall from \cite[III.Thm.4.10.2]{LQPP} and the Sobolev embedding theorem that
\bqn\label{emb}
\Wq\hookrightarrow C\big([0,a_m],\Wqb^{2-2/q}\big)\hookrightarrow C\big([0,a_m],C^1(\bar{\Om})\big)
\eqn 
so that the trace $\gamma_0 u:=u(0)\in \Wqb^{2-2/q}\hookrightarrow C^1(\bar{\Om})$ is well-defined for $u\in\Wq$. Moreover, since also
$$
\Wq\hookrightarrow W_q^1((0,a_m),L_q)\hookrightarrow C^{1-1/q}([0,a_m],L_q)
$$
with $L_q:=L_q(\Om)$, the interpolation inequality in \cite[I.Thm.2.11.1]{LQPP} yields in fact
\bqn\label{embb}
\Wq\hookrightarrow C^{1-1/q-\vartheta}([0,a_m],W_q^{2\vartheta})\ ,\quad 0\le \vartheta\le 1-1/q\ .
\eqn
Note that the interior, $\mathrm{int}(\Wqb^{2-2/q,+})$, of the positive cone $\Wqb^{2-2/q,+}$ of $\Wqb^{2-2/q}$ is not empty.
We set 
$$
\Lq:=L_q((0,a_m),L_q(\Om))\quad \text{and}\quad \Wqd:=\mathbb{W}_q^+\setminus\{0\}\ .
$$
Put $J:=[0,a_m]$. Given $\varrho>0$ and $h\in C^\varrho (J,C(\bar{\Om}))$, we let
$\Pi_{[h]}(a,\sigma)$, $0\le \sigma\le a\le a_m$,
denote the unique parabolic evolution operator corresponding to  
$-\Delta_D + h\in  C^\varrho \big(J,\ml(\Wqb^2,L_q)\big)$,
that is, $$z(a)=\Pi_{[h]}(a,\sigma)\Phi\ ,\quad a\in (\sigma,a_m)\ ,$$ defines the unique strong solution to 
$$
\partial_a z-\Delta_D z+hz=0\ ,\quad a\in (\sigma,a_m)\ ,\qquad z(\sigma)=\Phi\ ,
$$
for any given $\sigma\in (0,a_m)$ and $\Phi\in L_q$ (see \cite[II.Cor.4.4.1]{LQPP}). Note that the evolution operator is positive, i.e. 
$$
\Pi_{[h]}(a,\sigma)\Phi \in L_q^+\ ,\quad 0\le \sigma\le a\le a_m\ ,\quad \Phi\in L_q^+\ .
$$ 
As $J$ is a compact interval and $-\Delta_D$ has bounded imaginary powers, it follows from \eqref{emb} and \cite{LQPP} (in particular, see I.Cor.1.3.2, III.Thm.4.8.7, III.Thm.4.10.10 therein) that the operator $-\Delta_D + h$ has maximal $L_q$-regularity, i.e. the operator
$$
(\partial_a-\Delta_D+h,\gamma_0)\in\ml \big(\Wq,\Lq\times\Wqq\big)
$$
is a toplinear isomorphism. In particular,
$\Pi_{[h]}(\cdot,0)\Phi \in \Wq$ for $\Phi\in\Wqb^{2-2/q}$.
We set
$$
H_{[h]}:=\int_0^{a_m}b_1(a)\,\Pi_{[h]}(a,0)\,\rd a\ ,\quad \hat{H}_{[h]}:=\int_0^{a_m}b_2(a)\,\Pi_{[h]}(a,0)\,\rd a\ .
$$
Then $H_{[h]}$ and $\hat{H}_{[h]}$ belong to $\mk(\Wqb^{2-2/q})$, that is, they define compact linear operators on $\Wqb^{2-2/q}$, and they are strongly positive, that is, e.g.
\bqn\label{ssss}
H_{[h]}\Phi \in \mathrm{int}(\Wqqp)\ ,\quad \Phi\in\Wqqp\setminus\{0\}\ .
\eqn
The corresponding spectral radii $r(H_{[h]})$ and $r(\hat{H}_{[h]})$ can thus be characterized according to the Krein-Rutman theorem \cite[Thm.3.2]{AmannSIAMReview} (see Lemma~\ref{A2} from the appendix). In particular, the normalizations \eqref{6}  readily imply
\bqn\label{1mio}
r(H_{[0]})=r(\hat{H}_{[0]})=1\ 
\eqn
since any positive eigenfunction of $-\Delta_D$ is an eigenfunction of $H_{[h]}$ and $\hat{H}_{[h]}$ as well. 
It is worthwhile to point out that \eqref{embb} warrants an equivalent formulation of a solution $(u,v)\in\Wq\times\Wq$ to \eqref{1}-\eqref{4} as
\begin{align}
u(a)&=\Pi_{[\alpha_1 u\mp\alpha_2 v]}(a,0)\, u(0)\ ,\quad a\in J\ ,  &u(0)=\eta\, H_{[\alpha_1 u\mp\alpha_2 v]}\, u(0)\ ,\label{darst1}\\
 v(a)&=\Pi_{[\beta_1 v \mp\beta_2 u]}(a,0)\, v(0)\ ,\quad a\in J\ ,  &v(0)= \xi\, \hat{H}_{[\beta_1 v \mp\beta_2 u]}\, v(0)\ .\label{darst2}
\end{align}
Observe that $u$, $v$ are nonzero or nonnegative provided $u(0)$, $v(0)$ are nonzero or nonnegative. Hence, if $(u,v)\in\Wq^+\times\Wq^+$ solves \eqref{1}-\eqref{4}, then
\begin{align}
&\eta\, r(H_{[\alpha_1 u\mp\alpha_2 v]})=1\quad\text{if}\quad u(0)\in\Wqqp\setminus\{0\}\ ,\label{sp1}\\
&\xi\, r(\hat{H}_{[\beta_1 v \mp\beta_2 u]})=1\quad\text{if}\quad v(0)\in\Wqqp\setminus\{0\}\ ,\label{sp2}
\end{align}
owing to Lemma~\ref{A2}.
In particular, we have
\bqn\label{sp}
\eta\, r(H_{[\alpha_1 u_\eta]})=\xi\, r(\hat{H}_{[\beta_1 v_\xi]})=1\ ,\quad \eta,\xi>1\ 
\eqn
by \eqref{Aee1} and \eqref{Aee2} since $u_\eta(0), v_\xi(0)\in\Wqqp\setminus\{0\}$. We conclude this section with the following auxiliary result:

\begin{lem}\label{L00}
Given $M>0$ there is $c(M)>0$ such that
\bqn\label{iii}
\|u(a)\|_\infty+\|v(a)\|_\infty \le M\ ,\quad a\in J\ ,
\eqn
implies
\bqnn\label{bound}
\|u\|_{\Wq}\le c(M)(\eta+1)\ ,\quad \|v\|_{\Wq}\le c(M) (\xi+1)
\eqnn
for any solution $(u,v)\in\Wq^+\times\Wq^+$ to \eqref{1}-\eqref{4} with $\eta, \xi >0$.
\end{lem}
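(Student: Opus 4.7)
The plan is to combine maximal $L_q$-regularity of the pair $(\partial_a-\Delta_D,\gamma_0)\in\ml(\Wq,\Lq\times\Wqq)$ with a separate estimate of the trace $u(0)$ in $\Wqq$ deduced from the nonlocal initial condition. I would first observe that, since $|u|,|v|\le M$ pointwise on $J\times\Om$, the right-hand side of \eqref{1} satisfies $|\alpha_1u^2\mp\alpha_2 vu|\le (\alpha_1+\alpha_2)M^2$, so (as $J\times\Om$ has finite measure) $\|\alpha_1u^2\mp\alpha_2 vu\|_{\Lq}\le c(M)$, and analogously for the right-hand side of \eqref{2}.

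Next I would estimate $\|u(0)\|_{\Wqq}$ by exploiting the equivalent representation \eqref{darst1}, namely $u(0)=\eta\, H_{[h]}u(0)$ with $h:=\alpha_1 u\mp\alpha_2 v$ satisfying $\|h\|_\infty\le (\alpha_1+\alpha_2)M$. The analyticity of the semigroup generated by $-\Delta_D$ (recalled in Section~\ref{sec 2}) produces a smoothing estimate of the form $\|\Pi_{[h]}(a,0)\phi\|_{\Wqq}\le c(M)\,a^{-(1-1/q)}\|\phi\|_{L_q}$ for $\phi\in L_q$ and $a\in(0,a_m]$, with constant depending on $\|h\|_\infty$ only through $M$; since $a^{-(1-1/q)}\in L_1((0,a_m))$ and $b_1\in L_\infty$, integration in $a$ against $b_1$ yields $\|H_{[h]}\phi\|_{\Wqq}\le c(M)\|\phi\|_{L_q}$. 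The hypothesis \eqref{iii} evaluated at $a=0$ gives $\|u(0)\|_\infty\le M$ and thus $\|u(0)\|_{L_q}\le c\,M$, whence $\|u(0)\|_{\Wqq}\le c(M)\,\eta$. Maximal $L_q$-regularity applied to \eqref{1} then produces $\|u\|_{\Wq}\le K\bigl(\|\alpha_1u^2\mp\alpha_2 vu\|_{\Lq}+\|u(0)\|_{\Wqq}\bigr)\le c(M)(\eta+1)$; the analogous bound for $v$ follows identically from \eqref{darst2} with $\hat H_{[h']}$, $h':=\beta_1 v\mp\beta_2 u$, $\xi$, and $b_2$ in place of $H_{[h]}$, $h$, $\eta$, and $b_1$.

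The main technical obstacle is the uniform smoothing estimate for $\Pi_{[h]}$ with constant depending only on $\|h\|_\infty$: the unperturbed analytic semigroup satisfies $\|e^{a\Delta_D}\phi\|_{\Wqq}\le C\,a^{-(1-1/q)}\|\phi\|_{L_q}$, and the bounded-multiplier perturbation $h$ is absorbed via Duhamel's formula together with a Gronwall argument, producing a constant depending only on $M$ and on $a_m$. Everything else in the scheme is a routine assembly of inequalities.
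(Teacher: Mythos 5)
Your argument is correct and follows essentially the same route as the paper: maximal $L_q$-regularity reduces the problem to bounding $\|u(0)\|_{\Wqq}$, which is then extracted from the nonlocal initial condition by using the parabolic smoothing estimate $\|e^{a\Delta_D}\|_{\ml(L_q,\Wqq)}\lesssim a^{1/q-1}$ together with the pointwise bound $M$. The only cosmetic difference is that you phrase the trace estimate through a uniform smoothing bound for the perturbed evolution operator $\Pi_{[h]}$ (obtained by Duhamel from $e^{a\Delta_D}$), whereas the paper writes $u$ directly via Duhamel with the quadratic right-hand side as a forcing term in $\Lq$ and then substitutes into the birth law; the two are the same identity since $hu=\alpha_1u^2\mp\alpha_2vu$.
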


\begin{proof}
If $(u,v)\in\Wq^+\times\Wq^+$ solves \eqref{1}-\eqref{4}, we derive from \eqref{1}, \eqref{iii}, and the property of maximal $L_q$-regularity of $-\Delta_D$ that there is $c_0(M)>0$ such that
$$
\|u\|_{\Wq}\,\le\, c\,\big(\|u(0)\|_{\Wqq}+\|-\alpha_1 u^2\pm\alpha_2 uv\|_{\Lq}\big)\,\le\, c_0(M)\, \big(\|u(0)\|_{\Wqq}+1\big)\ .
$$
Writing \eqref{1} in the form
$$
u(a)=e^{a\Delta_D}u(0)+\int_0^a e^{(a-\sigma)\Delta_D}\big(-\alpha_1 u(\sigma)^2\pm\alpha_2 u(\sigma)v(\sigma)\big)\,\rd \sigma\ , \quad a\in J\ ,
$$
and using $\|e^{a\Delta_D}\|_{\ml(L_q,\Wqq)}\le c a^{1/q-1}$ for $a>0$, we obtain from \eqref{3} and \eqref{iii}
\bqnn
\begin{split}
\|u(0)\|_{\Wqq}\,&\le\,\eta\,\|b_1\|_\infty\int_0^{a_m} \|e^{a\Delta_D}\|_{\ml(L_q,\Wqq)}\, \|u(0)\|_{L_q}\,\rd a\\
&\quad+\eta\,\|b_1\|_\infty\int_0^{a_m}\int_0^a  \|e^{(a-\sigma)\Delta_D}\|_{\ml(L_q,\Wqq)}\, \big(\|\alpha_1 u(\sigma)^2\|_{L_q}+\|\alpha_2 u(\sigma) v(\sigma)\|_{L_q}\big)\,\rd \sigma\,\rd a\\
&\le\, c_1(M)\,\eta\ .
\end{split}
\eqnn
Consequently, $\|u\|_{\Wq}\le c(M)(\eta+1)$. Similarly we deduce $\|v\|_{\Wq}\le c(M)(\xi+1)$.

\end{proof}
 
\section{Cooperative Systems with $\xi<1$: Proof of Theorem~\ref{T2}}\label{sec 2b2}
We focus our attention on \eqref{1coo}-\eqref{2coo} subject to \eqref{3}-\eqref{4} when $\xi<1$.
First we show local bifurcation of a continuous curve from $\mathfrak{B}_1$ by using the results of Crandall-Rabinowitz~\cite{CrandallRabinowitz}.
We remark that 
\bqn\label{u}
\big(\eta\mapsto r(\hat{H}_{[-\beta_2 u_\eta]})\big)\in C\big( (1,\infty),(1,\infty)\big)\ \text{is strictly increasing}\ ,\qquad \lim_{\eta\rightarrow 1} r(\hat{H}_{[-\beta_2 u_\eta]})=1
\eqn
according to Lemma~\ref{A2}, Theorem~\ref{A1},  and \eqref{1mio}, so 
\bqn\label{nu}
\nu:=\dfrac{1}{ \lim\limits_{\eta\rightarrow \infty} r(\hat{H}_{[-\beta_2 u_\eta]})}\in [0,1) 
\eqn
is well-defined.

\begin{rem}\label{R32}
As $\|u_\eta\|_\infty\rightarrow \infty$ for $\eta\rightarrow\infty$ by Theorem~\ref{A1}, we conjecture $\nu=0$ in~\eqref{nu}.
\end{rem}

For the remainder of this section we fix $\xi\in (\nu,1)$. The observations above ensure the existence of a unique value $\eta_0:=\eta_0(\xi)>1$ for which
\bqn\label{xi1}
\xi\, r(\hat{H}_{[-\beta_2 u_{\eta_0}]})=1\ .
\eqn
Note then that
\bqn\label{z1}
\mathrm{ker}\big(1-\xi\hat{H}_{[-\beta_2 u_{\eta_0}]}\big)=\mathrm{span}\{\Psi_{0}\}\quad\text{with}\quad \Psi_{0}\in \mathrm{int}(\Wqqp)
\eqn
by the Krein-Rutman theorem. 
With these notations we have:

\begin{lem}\label{P22}
There is a local continuous curve $\mathfrak{C}_1\subset \R^+\times\Wqd\times\Wqd$ of coexistence solutions to \eqref{1coo}-\eqref{2coo} subject to \eqref{3}-\eqref{4} bifurcating from $(\eta_0,u_{\eta_0},0)\in \mathfrak{B}_1$, and all positive coexistence solutions near $(\eta_0,u_{\eta_0},0)$ lie on this curve.
\end{lem}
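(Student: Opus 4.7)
My plan is to apply the Crandall--Rabinowitz local bifurcation theorem after flattening the semi-trivial branch $\mathfrak{B}_1$. First I would define the smooth map $F\colon\R^+\times\Wq\times\Wq\to Y:=\Lq\times\Wqq\times\Lq\times\Wqq$ by
\begin{equation*}
F(\eta,u,v) := \begin{pmatrix} \partial_a u-\Delta_D u+\alpha_1 u^2-\alpha_2 uv \\ \gamma_0 u - \eta\int_0^{a_m} b_1(a)\,u(a)\,\rd a \\ \partial_a v-\Delta_D v+\beta_1 v^2-\beta_2 uv \\ \gamma_0 v - \xi\int_0^{a_m} b_2(a)\,v(a)\,\rd a \end{pmatrix},
\end{equation*}
so that solutions of \eqref{1coo}--\eqref{2coo} with \eqref{3}--\eqref{4} correspond to zeros of $F$. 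Theorem~\ref{A1} furnishes smoothness of $\eta\mapsto u_\eta$ on $(1,\infty)$, hence the change of variables $G(\eta,w,v):=F(\eta,u_\eta+w,v)$ rewrites the problem as $G(\eta,w,v)=0$ with \emph{trivial} branch $G(\eta,0,0)\equiv 0$.

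\textbf{Kernel and Fredholm property.} Next I would analyze the Fr\'echet derivative $L:=D_{(w,v)}G(\eta_0,0,0)$,
\begin{equation*}
L(\phi,\psi)=\begin{pmatrix} (\partial_a-\Delta_D+2\alpha_1 u_{\eta_0})\phi-\alpha_2 u_{\eta_0}\psi\\ \phi(0)-\eta_0\int_0^{a_m} b_1\phi\,\rd a\\ (\partial_a-\Delta_D-\beta_2 u_{\eta_0})\psi\\ \psi(0)-\xi\int_0^{a_m} b_2\psi\,\rd a\end{pmatrix}.
\end{equation*}
Maximal $L_q$-regularity will make the principal part an isomorphism $\Wq\to\Lq\times\Wqq$ on each factor, and the nonlocal birth integrals are compact by~\eqref{emb}, so $L$ is Fredholm of index zero. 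The $\psi$-subsystem decouples, and \eqref{darst2} together with \eqref{z1} forces $\psi=t\,\Pi_{[-\beta_2 u_{\eta_0}]}(\cdot,0)\Psi_0=:t\psi_0$ with $\psi_0\in\Wqd$. For any such $\psi$ the $\phi$-equation admits a unique solution: the strict pointwise inequality $2\alpha_1 u_{\eta_0}>\alpha_1 u_{\eta_0}$ together with the monotonicity statement of Lemma~\ref{A2} gives $\eta_0\, r(H_{[2\alpha_1 u_{\eta_0}]})<\eta_0\, r(H_{[\alpha_1 u_{\eta_0}]})=1$ by \eqref{sp}, so the associated homogeneous problem is trivial. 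Therefore $\ker L=\mathrm{span}\{(\phi_0,\psi_0)\}$ with $\phi_0$ determined uniquely by $\psi_0$.

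\textbf{Transversality.} A direct computation yields
\begin{equation*}
D_\eta L(\phi_0,\psi_0)=\begin{pmatrix} 2\alpha_1(\partial_\eta u_{\eta_0})\phi_0-\alpha_2(\partial_\eta u_{\eta_0})\psi_0\\ -\int_0^{a_m} b_1\phi_0\,\rd a\\ -\beta_2(\partial_\eta u_{\eta_0})\psi_0\\ 0\end{pmatrix}.
\end{equation*}
Since the $\phi$-block of $L$ is already surjective, a spanning cokernel functional $\ell\in Y^*$ is furnished, on the third and fourth slots, by the Krein--Rutman positive adjoint eigenfunctional of the compact operator $\xi\hat{H}_{[-\beta_2 u_{\eta_0}]}$. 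Differentiating the fixed-point identity for $u_\eta$ and invoking the strong parabolic maximum principle will give $\partial_\eta u_{\eta_0}>0$ pointwise; combined with $\psi_0>0$, this renders the pairing $\ell\bigl(D_\eta L(\phi_0,\psi_0)\bigr)$ a strictly signed integral against a positive functional, hence nonzero. Equivalently, the transversality condition is the infinitesimal counterpart of the strict monotonicity~\eqref{u} of $\eta\mapsto\xi\,r(\hat H_{[-\beta_2 u_\eta]})$ through the value $1$ at $\eta_0$.

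\textbf{Conclusion and main obstacle.} Crandall--Rabinowitz~\cite{CrandallRabinowitz} will then produce a smooth curve $(\eta(\varepsilon),w(\varepsilon),v(\varepsilon))=(\eta_0,0,0)+\varepsilon(\dot\eta,\phi_0,\psi_0)+o(\varepsilon)$, $|\varepsilon|<\varepsilon_0$, of zeros of $G$ exhausting all nontrivial zeros near $(\eta_0,0,0)$; for small $\varepsilon>0$, the strict positivity of $\psi_0$ and \eqref{ssss} together with continuity of $u_{\eta(\varepsilon)}+w(\varepsilon)$ near $u_{\eta_0}\in\Wqd$ will keep both components in $\Wqd$, yielding the local coexistence curve $\mathfrak{C}_1$. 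I expect the main obstacle to be the transversality step: extracting a concrete positive cokernel functional from the adjoint Krein--Rutman problem for the backward nonlocal $\psi$-subsystem and ensuring that the strict sign $\partial_\eta u_{\eta_0}>0$ propagates to a strict sign in the resulting pairing.
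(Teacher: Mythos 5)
Your proposal is correct and follows essentially the same strategy as the paper: flatten the semi-trivial branch via $(w,v)=(u-u_\eta,v)$, compute the linearization, identify the one-dimensional kernel spanned by $(\phi_0,\psi_0)$ with the $\psi$-part coming from the decoupled subsystem and the $\phi$-part determined uniquely because $\eta_0\,r(H_{[2\alpha_1 u_{\eta_0}]})<1$, verify transversality, apply Crandall--Rabinowitz, and finally use strong positivity of $\Psi_0$ and $u_{\eta_0}(0)$ together with \eqref{darst1}--\eqref{darst2} to push the local curve into $\Wqd\times\Wqd$.

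The one place where your route genuinely deviates from the paper's is the transversality step. You propose to compute the cokernel pairing directly: since the $\phi$-block of $L$ is surjective, the cokernel lives entirely in the $\psi$-slots, and you pull back the Krein--Rutman \emph{adjoint} eigenfunctional of $\xi\hat H_{[-\beta_2 u_{\eta_0}]}$ and pair it against the $\psi$-component $-\beta_2(\partial_\eta u_{\eta_0})\psi_0$ of $D_\eta L(\phi_0,\psi_0)$. That pairing is strictly signed because $\partial_\eta u_{\eta_0}\in\mathrm{int}(\Wqqp)$ (Theorem~\ref{A1}), $\psi_0\in\Wqd$, and the dual Krein--Rutman eigenfunctional is strictly positive. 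The paper instead proves transversality by contradiction without ever invoking the dual problem: it assumes $F_{\eta,(w,v)}(\eta_0,0,0)(\phi_*,\psi_*)\in\mathrm{rg}\,F_{(w,v)}(\eta_0,0,0)$, constructs a suitable positive interior element $p(0)=\tau\Psi_0-v(0)$, and derives an equation of the form $(1-\xi\hat H_{[-\beta_2 u_{\eta_0}]})p(0)=\text{(strictly positive element)}$ which, by \cite[Thm.3.2]{AmannSIAMReview} and $\xi\,r(\hat H_{[-\beta_2 u_{\eta_0}]})=1$, has no solution in the positive cone. Both arguments are valid and encode the same underlying fact that $\eta\mapsto r(\hat H_{[-\beta_2 u_\eta]})$ crosses $1/\xi$ with nonzero speed at $\eta_0$; the paper's version has the small advantage of staying entirely in the primal cone (no need to discuss existence and strict positivity of the adjoint eigenfunctional, which you yourself flagged as the obstacle). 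Be aware that making your version rigorous requires citing precisely that the Krein--Rutman eigenvalue of the compact strongly positive operator $\xi\hat H_{[-\beta_2 u_{\eta_0}]}$ admits a strictly positive dual eigenfunctional, and that $\partial_\eta u_{\eta_0}$ is in the interior of the positive cone—both of which are available from \cite[Thm.3.2]{AmannSIAMReview} and Theorem~\ref{A1}—so your plan closes.
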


\begin{proof}
The proof is in the spirit of the one of \cite[Prop.2.7]{WalkerJRAM}. We are linearizing \eqref{1coo}-\eqref{2coo} around $(\eta_0,u_{\eta_0},0)\in \mathfrak{B}_1$. For this observe that $(\eta,u,v)=(\eta,u_\eta+w,v)\in \R\times\Wq\times \Wq$ solves \eqref{1coo}-\eqref{2coo} subject to \eqref{3}-\eqref{4} if and only if $(\eta,w,v)\in \R\times\Wq\times \Wq$ solves
\begin{align}
&\partial_a w-\Delta_D w=-\alpha_1w^2-2\alpha_1 u_\eta w+\alpha_2 v(u_\eta+w)\ ,& w(0)=\eta W\ ,\label{18aa}\\
&\partial_av-\Delta_Dv=-\beta_1 v^2+\beta_2 v(u_\eta+w)\ ,& v(0)=\xi V\ ,\label{18bb}
\end{align}
where we slightly abuse notation by writing
$$
W:=\int_0^{a_m} b_1(a)\, w(a)\,\rd a\ ,\qquad V:=\int_0^{a_m} b_2(a)\, v(a)\,\rd a\ 
$$
when $w,v\in\Wq$. We shall use this notation also for other capital letters since it will always be clear from the context, which of the profiles $b_1$ or $b_2$ is meant. Using maximal $L_q$-regularity of $-\Delta_D$, we may introduce the operator
$$
T:=\big(\partial_a-\Delta_D,\gamma_0\big)^{-1}\in\ml(\Lq\times\Wqq,\Wq)\ 
$$
so that the solutions to \eqref{18aa}-\eqref{18bb} are the zeros of the function
$$
F(\eta,w,u):=\left(\begin{matrix} w-T\big(-\alpha_1w^2-2\alpha_1 u_\eta w+\alpha_2 v(u_\eta+w)\, , \,\eta W\big)\\ v-T\big(-\beta_1 v^2+\beta_2 v(u_\eta+w)\, ,\, \xi V\big)\end{matrix}\right)\ .
$$
Observe that
$$
F\in C^2\big((1,\infty)\times\Wq\times\Wq,\Wq\times\Wq\big)
$$ 
with partial Frech\'et derivatives at $(\eta,w,v)=(\eta,0,0)$ given by 
$$
F_{(w,v)}(\eta,0,0)(\phi,\psi)=\left(\begin{matrix} \phi-T(-2\alpha_1 u_\eta \phi+\alpha_2 u_\eta\psi\, , \,\eta \Phi)\\ \psi-T( \beta_2 u_\eta\psi \, ,\, \xi \Psi)\end{matrix}\right)
$$
and
$$
F_{\eta,(w,v)}(\eta,0,0)(\phi,\psi)=\left(\begin{matrix} -T(-2\alpha_1 u'_\eta \phi+\alpha_2 u'_\eta\psi\, , \, \Phi)\\ -T( \beta_2 u'_\eta\psi \, ,\, 0)\end{matrix}\right)
$$
for $(\phi,\psi)\in \Wq\times\Wq$, where $u'_\eta:=\frac{\partial}{\partial\eta} u_\eta$ is well defined according to Theorem~\ref{A1}.
We next show that the kernel of $F_{(w,v)}(\eta_0,0,0)$ is one-dimensional.
Given $(\phi,\psi)\in \mathrm{ker}(F_{(w,v)}(\eta_0,0,0))$ we have
\begin{align}
\partial_a\phi-\Delta_D\phi &=-2\alpha_1 u_{\eta_0} \phi+\alpha_2 u_{\eta_0}\psi\ ,& \phi(0)=\eta_0\,\Phi\ ,\label{t1}\\
\partial_a\psi-\Delta_D\psi &=\beta_2 u_{\eta_0}\psi\ ,& \psi(0)=\xi\Psi\ .\label{tt1}
\end{align}
From \eqref{tt1} and \eqref{z1} we conclude that $\psi=\mu \psi_*$ for some $\mu\in\R$ with
$$
\psi_*:=\Pi_{[-\beta_2 u_{\eta_0}]}(\cdot,0)\,\Psi_{0}\in \Wq\ .
$$
Plugging this into \eqref{t1} and observing that $1-\eta_0 H_{[2\alpha_1 u_{\eta_0}]}$ is invertible since $$\eta_0 r(H_{[2\alpha_1 u_{\eta_0}]})<\eta_0 r(H_{[\alpha_1 u_{\eta_0}]})=1$$ by \eqref{sp}, Lemma~\ref{A2}, and the positivity of $u_{\eta_0}$, we derive $\phi=\mu \phi_*$, where
$$
\phi_{*}:=\Pi_{[2\alpha_1 u_{\eta_0}]}(\cdot,0)\Phi_{0}+ S\psi_{*}\in \Wq\ ,\quad a\in J\ , 
$$
with
$$(S\psi_{*})(a):=
{\alpha_2}\int_0^a \Pi_{[2\alpha_1 u_{\eta_0}]}(a,\sigma)\,\big( u_{\eta_0}(\sigma) \psi_{*}(\sigma)\big)\,\rd \sigma\ , \quad a\in J\  ,
$$
$$
\Phi_{0}:=\eta_0 \big(1-\eta_0 H_{[2\alpha_1 u_{\eta_0}]}\big)^{-1} \int_0^{a_m} b_1(a)(S\psi_{*})(a)\,\rd a\ .
$$
Therefore,
$$
\mathrm{ker}\big(F_{(w,v)}(\eta_0,0,0)\big)=\mathrm{span}\big\{(\phi_*,\psi_*)\big\}\ .
$$ 
As \cite[Thm.1.1]{AmannGlasnik00} and Sobolev's embedding theorem ensure the compact embedding $\Wq\dhr L_\infty(J,C(\bar{\Om}))$ since $q>n+2$, we have 
\bqn\label{compp}
\Wq\times\Wq\rightarrow \Lq\ ,\quad (w,v)\mapsto wv\quad\text{is compact}\ .
\eqn
In particular, it readily follows that the derivative of $F$ has the form $F_{(w,v)}(\eta_0,0,0)=1-\hat{T}$ with a compact operator $\hat{T}$. From this we get that also the codimension of $\mathrm{rg}\big(F_{(w,v)}(\eta_0,0,0)\big)$ equals one.
We next check the transversality condition of \cite{CrandallRabinowitz}. For, suppose that
$$
F_{\eta,(w,v)}(\eta_0,0,0)(\phi_*,\psi_*)\in\mathrm{rg}\big(F_{(w,v)}(\eta_0,0,0)\big)\ .
$$
Then there exists $v\in\Wq$ with 
$$
v-T(\beta_2 u_{\eta_0} v, \xi V)=-T(\beta_2 u'_{\eta_0} \psi_{*}\,,\,0)\ .
$$
Choosing $\tau>0$ such that $\tau\Psi_{0}-v(0)\in\mathrm{int}(\Wqqp)$ and putting $p:=\tau\psi_{*}-v$, we obtain from the definition of $\psi_*$
$$
p(a)=\Pi_{[-\beta_2 u_{\eta_0}]}(a,0) p(0)+\int_0^a \Pi_{[-\beta_2 u_{\eta_0}]}(a,\sigma)\, \big(\beta_2 u'_{\eta_0}(\sigma) \psi_{*}(\sigma)\big)\, \rd \sigma\ ,\quad a\in J\ .
$$
Thus, since $p(0)=\xi P$, 
$$
\big(1-\xi\hat{H}_{[-\beta_2 u_{\eta_0}]}\big) p(0)=\xi \int_0^{a_m} b_2(a)\int_0^a \Pi_{[-\beta_2 u_{\eta_0}]}(a,\sigma)\, \big(\beta_2 u'_{\eta_0}(\sigma) \psi_{*}(\sigma)\big)\, \rd \sigma\rd a\ .
$$
However, as the right hand side belongs to $\Wqqp\setminus\{0\}$ due to \eqref{5}, Theorem~\ref{A1}, and the strong positivity of the evolution operator $\Pi_{[-\beta_2 u_{\eta_0}]}(a,\sigma)$ on $\Wqq$ for $0\le\sigma <a\le a_m$,
this last equation admits no positive solution $p(0)$ according to \cite[Thm.3.2]{AmannSIAMReview} and \eqref{xi1} which clearly contradicts the fact that $p(0)=\tau\Psi_{0}-v(0)$ belongs to $\mathrm{int}(\Wqqp)$. Consequently,
$$
F_{\eta,(w,v)}(\eta_0,0,0)(\phi_{*},\psi_{*})\notin\mathrm{rg}\big(F_{(w,v)}(\eta_0,0,0)\big)\ .
$$
We are thus in a position to apply \cite[Thm.1.7]{CrandallRabinowitz} and deduce the existence of some $\ve_0>0$ and functions $\eta\in C((-\ve_0,\ve_0),\R)$ and $\theta_j\in C((-\ve_0,\ve_0),\Wq)$ with $\eta(0)=\eta_0$, $\theta_j(0)=0$ such that the nontrivial zeros of the function $F$ close to $(\eta_0,0,0)$ lie on the curve
$$
\big\{\big(\eta(\ve),\ve(\phi_*,\psi_*)+\ve(\theta_1(\ve),\theta_2(\ve))\big)\, ;\, \vert\ve\vert<\ve_0\big\}\ .
$$
By Theorem~\ref{A1},
$$
\mathfrak{C}_1:=\big\{\big(\eta(\ve),u_{\eta(\ve)}+\ve\phi_*+\ve\theta_1(\ve),\ve\psi_*+\ve\theta_2(\ve)\big)\, ;\, 0<\ve<\ve_0\big\}
$$
is then a continuous curve of solutions to \eqref{1coo}-\eqref{2coo}, \eqref{3}-\eqref{4} bifurcating from \mbox{ $(\eta_0,u_{\eta_0},0)\in\mathfrak{B}_1$}. As all traces $u_{\eta_0}(0)$, $\phi_*(0)=\Phi_{0}$, and $\psi_*(0)=\Psi_{0}$ belong to $\mathrm{int}(\Wqqp)$, it follows from \eqref{emb} and the continuity of $\theta_j$ that the initial values $u(0)$ and $v(0)$ for a solution $(\eta,u,v)\in \mathfrak{C}_1$ belong to $\mathrm{int}(\Wqqp)$ provided $\ve_0>0$ is sufficiently small, whence
$$
(u,v)\in\Wqd\times\Wqd\ ,\quad (\eta,u,v)\in \mathfrak{C}_1\ ,
$$ 
by \eqref{darst1}, \eqref{darst2}, and positivity of the corresponding evolution operators. This completes the proof of the lemma.
\end{proof}

Next we show that $\mathfrak{C}_1$ extends to a global continuum of positive coexistence solutions by invoking Rabinowitz' global alternative~\cite{Rabinowitz} along with the unilateral global results of L\'opez-G\'omez \cite{LopezGomezChapman}. The main steps of the proof are the same as in the proof of \cite[Thm.2.2]{WalkerJRAM}, but we have to argue here more subtle at several points  since 
we are deriving bifurcation with respect to the parameter $\eta$ by linearizing around a point $(\eta,u_\eta,0)\in\mathfrak{B}_1$ . \\

 

Setting $u_\eta:=0$ for $\eta\le 1$ it follows from Theorem~\ref{A1} that
\bqn\label{100}
\big(\eta\mapsto u_\eta\big)\in C(\R,\Wq^+)\ .
\eqn
Hence, defining
\begin{align*}
&Z_1[\eta]:=\big(\partial_a-\Delta_D+2\alpha_1u_\eta,\gamma_0)^{-1}\in \ml(\Lq\times\Wqq, \Wq)\ ,\\
&Z_2[\eta]:=\big(\partial_a-\Delta_D-\beta_2u_\eta,\gamma_0)^{-1}\in \ml(\Lq\times\Wqq, \Wq)\ ,
\end{align*}
based on maximal $L_q$-regularity (see Section~\ref{sec 2}), we deduce
\bqn\label{102}
\big(\eta\mapsto Z_j[\eta]\big)\in C\big(\R,\ml(\Lq\times \Wqq,\Wq)\big)\ ,\quad j=1, 2\ .
\eqn
Writing again $(\eta,u,v)=(\eta,u_\eta+w,v)\in \R\times\Wq\times \Wq$ and
recalling \eqref{18aa} and \eqref{18bb}, it follows that \eqref{1coo}-\eqref{2coo} subject to \eqref{3}-\eqref{4} may be recast equivalently as
\bqn\label{19t}
(w,v)-K(\eta)(w,v)+R(\eta,w,v)=0\ 
\eqn
by setting
$$
K(\eta)(w,v):=\left(\begin{matrix} Z_1[\eta](\alpha_2u_\eta v,\eta W)\\ Z_2[\eta](0,\xi V)\end{matrix}\right)\ ,\qquad R(\eta,w,v):=-\left(\begin{matrix} Z_1[\eta](-\alpha_1 w^2+\alpha_2 w v,0)\\ Z_2[\eta](-\beta_1v^2+\beta_2wv,0)\end{matrix}\right)\ 
$$
for $(w,v)\in \Wq\times\Wq$ still using the notation
\bqn\label{nota}
W:=\int_0^{a_m} b_1(a)\, w(a)\,\rd a\ ,\qquad V:=\int_0^{a_m} b_2(a)\, v(a)\,\rd a\ .
\eqn
This notation we shall use throughout the remainder of this section as no confusion seems likely.
Then \eqref{compp}, \eqref{100}, \eqref{102}, and the compact embedding $W_q^2\dhr \Wqq$ entail  
\bqn\label{K}
K(\eta)\in\mk(\Wq\times\Wq)\ \text{depends continuously on $\eta\in\R$}\ ,
\eqn
and 
\bqn\label{R}
R\in C(\R\times\Wq\times\Wq,\Wq\times\Wq)\ \text{is compact}
\eqn 
with
\bqn\label{RR}
R(\eta,w,v)=o\big(\|(w,v)\|_{\Wq\times\Wq}\big)\ \, \text{as}\ \, \|(w,v)\|_{\Wq\times\Wq}\rightarrow 0\ ,
\eqn
uniformly with respect to $\eta$ in compact intervals. Moreover, we have:

\begin{lem}\label{L6}
Let $\eta\in\R$. If $\mu\ge 1$ is an eigenvalue of the compact operator $K(\eta)$ with eigenvector $(w,v)\in\Wq\times\Wq$, then either $\eta>1$ and $\mu/\xi$ is an eigenvalue of~$\hat{H}_{[-\beta_2 u_\eta]}$ with eigenvector $v(0)\in\Wqq$ or $\mu=\eta=1$.
\end{lem}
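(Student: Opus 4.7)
The plan is to unpack the eigenvalue equation $\mu(w,v) = K(\eta)(w,v)$ componentwise by inverting $Z_1[\eta]$ and $Z_2[\eta]$, and then use the evolution-operator representation of the resulting linear parabolic problems to reduce everything to spectral identities on the initial traces in $\Wqq$. Concretely, the second component $\mu v = Z_2[\eta](0,\xi V)$ means that $\mu v$ solves the Cauchy problem
$$
\partial_a(\mu v) - \Delta_D(\mu v) - \beta_2 u_\eta (\mu v) = 0, \qquad \mu v(0) = \xi V,
$$
so $v(a) = \Pi_{[-\beta_2 u_\eta]}(a,0)\, v(0)$; integrating against $b_2$ yields the identity $\mu v(0) = \xi\,\hat{H}_{[-\beta_2 u_\eta]}\, v(0)$. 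Analogously, $\mu w = Z_1[\eta](\alpha_2 u_\eta v,\eta W)$ together with variation of parameters produces
$$
\mu w(0) = \eta\, H_{[2\alpha_1 u_\eta]}\, w(0) + \frac{\eta}{\mu}\int_0^{a_m} b_1(a)\int_0^a \Pi_{[2\alpha_1 u_\eta]}(a,\sigma)\,\alpha_2 u_\eta(\sigma) v(\sigma)\,\rd\sigma\,\rd a.
$$

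Next I would split on whether $v$ vanishes. If $v\neq 0$, then the representation $v(a) = \Pi_{[-\beta_2 u_\eta]}(a,0)\,v(0)$ forces $v(0) \neq 0$, and the first identity says precisely that $\mu/\xi$ is an eigenvalue of $\hat{H}_{[-\beta_2 u_\eta]}$ with eigenvector $v(0) \in \Wqq$. To deduce $\eta > 1$ I would argue by contradiction: if $\eta \le 1$, then $u_\eta = 0$ by the extension preceding \eqref{100}, so $\hat{H}_{[-\beta_2 u_\eta]} = \hat{H}_{[0]}$, whose spectral radius equals $1$ by \eqref{1mio}. But the standing hypothesis $\xi < 1$ together with $\mu \ge 1$ yields $|\mu/\xi| > 1 = r(\hat{H}_{[0]})$, contradicting that $\mu/\xi$ is an eigenvalue.

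If instead $v = 0$, the double-integral term vanishes and the identity collapses to $\mu w(0) = \eta H_{[2\alpha_1 u_\eta]} w(0)$. Since $(w,v) \neq 0$ we must have $w \neq 0$, hence $w(0) \neq 0$ by the now-homogeneous evolution representation for $w$. For $\eta > 1$, the strict monotonicity of the spectral radius in the zeroth-order coefficient (Lemma~\ref{A2}) together with $\eta\, r(H_{[\alpha_1 u_\eta]}) = 1$ from \eqref{sp} and the positivity of $u_\eta$ yields $r(H_{[2\alpha_1 u_\eta]}) < r(H_{[\alpha_1 u_\eta]}) = 1/\eta$, forcing $\mu < 1$ and contradicting $\mu \ge 1$. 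For $\eta \le 1$ the operator reduces to $H_{[0]}$ with spectral radius $1$, so $|\mu/\eta| \le 1$; Krein-Rutman positivity rules out $\eta \le 0$ (otherwise $\mu w(0) = 0$ would force $w(0) = 0$), and one is left with $\mu \le \eta \le 1$, which combined with $\mu \ge 1$ gives $\mu = \eta = 1$.

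The main subtlety lies in the $\eta > 1$ subcase of $v = 0$: one truly needs the strict Krein-Rutman monotonicity $r(H_{[h+g]}) < r(H_{[h]})$ for strictly positive $g$, as merely nonstrict monotonicity would leave $\mu = 1$ as a possibility and destroy the dichotomy. Once this strict ingredient from Lemma~\ref{A2} is in hand, together with the extension convention that trivialises $u_\eta$ in the range $\eta \le 1$, the remainder of the argument is direct bookkeeping with the normalizations \eqref{1mio} and \eqref{sp}.
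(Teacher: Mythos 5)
Your proof is correct and follows essentially the same route as the paper's: split on whether $v$ vanishes, reduce the eigenvalue equation to the trace identities $\mu v(0)=\xi\hat{H}_{[-\beta_2 u_\eta]}v(0)$ (resp.\ $\mu w(0)=\eta H_{[2\alpha_1 u_\eta]}w(0)$) via the evolution operators, and then compare against the spectral radii using \eqref{1mio}, \eqref{sp}, and the strict monotonicity in Lemma~\ref{A2}. The only cosmetic difference is that the paper records the intermediate inequality $\mu\le\eta\,r(H_{[2\alpha_1 u_\eta]})$ before branching on $\eta$, whereas you argue the two subcases directly; the substance is identical (and both treatments are equally brief about genuinely negative $\eta$, which is immaterial in context).
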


\begin{proof}
Let $\mu\ge 1$ and $(w,v)\in (\Wq\times\Wq)\setminus\{(0,0)\}$ with $K(\eta)(w,v)=\mu (w,v)$. On the one hand, if $v= 0$, then
$$
\partial_a w-\Delta_Dw+2\alpha_1 u_\eta w=0\ ,\quad w(0)=\frac{\eta}{\mu} W\ ,
$$
from which
$$
w(a)=\Pi_{[2\alpha_1 u_\eta]}(a,0) w(0)\ ,\quad a\in J\ ,\qquad
w(0)= \frac{\eta}{\mu} H_{[2\alpha_1 u_\eta]} w(0)\ .
$$
In particular, $\eta\not= 0$ and $w(0)\not=0$ since otherwise $(w,v)=(0,0)$, and hence 
\bqn\label{104}
\mu\le \eta r(H_{[2\alpha_1 u_\eta]})\ .
\eqn
Next, $\eta>1$ is impossible since otherwise $u_\eta\in\Wqd$ and so
$$
\frac{\eta}{\mu}r(H_{[2\alpha_1 u_\eta]}) < \eta r(H_{[\alpha_1 u_\eta]}) =1
$$
by Lemma~\ref{A2} and \eqref{sp} contradicting \eqref{104}. Hence $\eta\le 1$ and thus
$$
\frac{\eta}{\mu}r(H_{[2\alpha_1 u_\eta]}) = \frac{\eta}{\mu}r(H_{[0]}) =\frac{\eta}{\mu}\le 1
$$
by \eqref{1mio} what is only possible if $\mu=\eta=1$ according to \eqref{104}. On the other hand, if $v\not= 0$, then from
$$
\partial_a v-\Delta_Dv-\beta_2u_\eta v=0\ ,\quad v(0)=\frac{\xi}{\mu}V\ 
$$
it follows
$$
v(a)=\Pi_{[-\beta_2 u_\eta]}(a,0) v(0)\ ,\quad a\in J\ ,\qquad
v(0)= \frac{\xi}{\mu} \hat{H}_{[-\beta_2 u_\eta]} v(0)\ ,
$$
and so $v(0)\not= 0$ and $\xi\not=0$ since otherwise $v= 0$. Consequently, $\mu/\xi$ is an eigenvalue of~$\hat{H}_{[-\beta_2 u_\eta]}$ with eigenvector $v(0)$. Assuming $\eta\le 1$ we have $u_\eta=0$ and thus $\mu/\xi\le r(\hat{H}_{[0]})=1$ by \eqref{1mio} contradicting $0< \xi<1$ and $\mu\ge 1$.
\end{proof}

As a consequence of Lemma~\ref{L6} the set of singular values of the family $K(\eta)$ is discrete:

\begin{cor}\label{C22}
The set $\Sigma:=\{\eta\in\R ; \mathrm{dim}(\mathrm{ker}(1-K(\eta)))\ge 1\}$ is discrete.
\end{cor}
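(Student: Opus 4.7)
The plan is to combine the structural reduction provided by Lemma~\ref{L6} with the analytic Fredholm theorem. Applying Lemma~\ref{L6} with $\mu=1$ restricts $\Sigma$ as follows: any $\eta\in\Sigma$ satisfies either $\eta=1$, or else $\eta>1$ together with $1/\xi\in\sigma_p(\hat{H}_{[-\beta_2 u_\eta]})$. Thus $\Sigma\subseteq\{1\}\cup\Sigma_*$, where
\begin{equation*}
\Sigma_*:=\big\{\eta\in(1,\infty):1/\xi\in\sigma_p(\hat{H}_{[-\beta_2 u_\eta]})\big\}\ ,
\end{equation*}
so it suffices to show that $\Sigma_*$ is discrete in $(1,\infty)$. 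Equivalently, I want to show that the family of Fredholm operators of index zero
\begin{equation*}
\Phi(\eta):=1-\xi\hat{H}_{[-\beta_2 u_\eta]}\in\ml(\Wqq)\ ,\quad \eta>1\ ,
\end{equation*}
is invertible outside a discrete subset of $(1,\infty)$.

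To carry this out, I would first upgrade the continuity of $\eta\mapsto u_\eta$ from $(1,\infty)$ into $\Wq$ to real-analyticity: the defining equation \eqref{Aee1} is polynomial in $u$, its linearization at $u_\eta$ corresponding to $1-\eta H_{[2\alpha_1 u_\eta]}$ is invertible (since $\eta\, r(H_{[2\alpha_1 u_\eta]})<\eta\, r(H_{[\alpha_1 u_\eta]})=1$, as already used in the proof of Lemma~\ref{P22}), and the analytic implicit function theorem delivers a holomorphic extension to a complex neighborhood of $(1,\infty)$. Standard perturbation arguments then transport analyticity to $\eta\mapsto\hat{H}_{[-\beta_2 u_\eta]}\in\mk(\Wqq)$, making $\Phi$ an analytic family of compact perturbations of the identity. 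To apply the analytic Fredholm theorem I need a single point of invertibility: by~\eqref{u}, $r(\hat{H}_{[-\beta_2 u_\eta]})\to 1$ as $\eta\searrow 1$, hence $\xi\, r(\hat{H}_{[-\beta_2 u_\eta]})<1$ for $\eta>1$ close enough to $1$, which forces $1\notin\sigma(\xi\hat{H}_{[-\beta_2 u_\eta]})$ and invertibility of $\Phi(\eta)$ at any such $\eta$. The analytic Fredholm theorem then yields the desired discreteness of $\Sigma_*$ in $(1,\infty)$.

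The main obstacle I anticipate is the analyticity claim for $\eta\mapsto u_\eta$: the text preceding the corollary only explicitly records continuity in \eqref{100} and tacitly uses $C^1$-dependence in Lemma~\ref{P22}. Either one verifies that Theorem~\ref{A1} in the appendix indeed delivers holomorphic dependence (which is standard for implicit-function-theorem arguments with polynomial nonlinearities), or one circumvents analyticity by a local Riesz-projection reduction: at any putative accumulation point $\eta^*\in\Sigma_*$, the spectral projection attached to the cluster of eigenvalues of $\xi\hat{H}_{[-\beta_2 u_\eta]}$ near $1$ yields a continuous family of finite-rank projections, reducing the question to the zero set of a continuous determinant and allowing a contradiction from non-constancy of that determinant near $\eta^*$. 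Once the regularity issue is settled, the remaining steps are a routine application of standard perturbation theory.
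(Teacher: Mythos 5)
Your proposal is correct and takes essentially the same route as the paper: reduction via Lemma~\ref{L6} to the family $1-\xi\hat{H}_{[-\beta_2 u_\eta]}$, real-analyticity of $\eta\mapsto u_\eta$ (hence of $\eta\mapsto\hat{H}_{[-\beta_2 u_\eta]}$), invertibility for $\eta$ slightly above $1$, and an analytic Fredholm argument (the paper invokes \cite[Thm.4.4.4]{LopezGomezChapman}). Your worry about analyticity is moot: Theorem~\ref{A1} in the appendix explicitly states that $\eta\mapsto u_\eta$ is real analytic, so the backup Riesz-projection/determinant argument is unnecessary.
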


\begin{proof}
Lemma~\ref{L6} ensures 
$$
\Sigma\cap (1,\infty)\subset \Xi:=\{\eta>1 ; \mathrm{dim}(\mathrm{ker}(1-\xi\hat{H}_{[-\beta_2 u_\eta]}))\ge 1\}\ .
$$
Due to
$$
\Pi_{[-\beta_2 u_\eta]}(\cdot,0)\Phi=\big(\partial_a-\Delta_D-\beta_2 u_\eta,\gamma_0)^{-1}(0,\Phi)\ ,\quad \Phi\in \Wqq\ ,\quad \eta>1\ ,
$$
it follows from the analyticity of the inversion map for linear operators and  the analyticity of the map $\eta\mapsto u_\eta$ stated in Theorem~\ref{A1} that also the map $(1,\infty)\rightarrow \mk(\Wqq)$, $ \eta\mapsto \xi\hat{H}_{[-\beta_2 u_\eta]}$ is real analytic. Thus, since $1-\xi\hat{H}_{[-\beta_2 u_\eta]}$ is invertible for $\eta\in (1,\eta_0)$ owing to \eqref{u} and \eqref{xi1}, we are in a position to apply \cite[Thm.4.4.4]{LopezGomezChapman} and conclude that $\Xi$ is discrete. If $\eta\in \Sigma$ with $\eta\le 1$, then necessarily $\eta=1$ by Lemma~\ref{L6}. 
\end{proof}

Next, we characterize the dependence on the parameter $\eta$ of the fixed point index $\mathrm{Ind}(0,K(\eta))$ of zero with respect to $K(\eta)$. Recall that $\mathrm{Ind}(0,K(\eta))=(-1)^{\zeta(\eta)}$, where $\zeta(\eta)$ is the sum of the algebraic multiplicities of all real eigenvalues of $K(\eta)$ greater than one, see e.g. \cite[Sect.5.6]{LopezGomezChapman}.

\begin{lem}\label{L8}
The fixed point index $\mathrm{Ind}(0,K(\eta))$ of zero with respect to $K(\eta)$ changes sign as $\eta$ crosses $\eta_0$.
\end{lem}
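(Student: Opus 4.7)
The plan is to compute the index via the standard formula $\mathrm{Ind}(0, K(\eta)) = (-1)^{\zeta(\eta)}$, where $\zeta(\eta)$ is the sum of algebraic multiplicities of real eigenvalues of $K(\eta)$ strictly greater than $1$, and to show that exactly one algebraically simple real eigenvalue of $K(\eta)$ crosses $1$ from below as $\eta$ increases through $\eta_0$. The key structural observation is that the second component of $K(\eta)(w,v)$ depends only on $v$, so $K(\eta)$ is block-triangular on $\Wq \times \Wq$ with diagonal blocks $A(\eta) w := Z_1[\eta](0, \eta W)$ and $C(\eta) v := Z_2[\eta](0, \xi V)$. Since for a compact block-triangular operator the nonzero spectrum equals, with algebraic multiplicities, the union of the diagonal spectra, it suffices to analyze $A(\eta)$ and $C(\eta)$ separately.

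For $A(\eta)$ the first case in the proof of Lemma~\ref{L6} applies verbatim and shows that any real eigenvalue $\mu \ge 1$ would satisfy $\mu \le \eta\, r(H_{[2\alpha_1 u_\eta]}) < \eta\, r(H_{[\alpha_1 u_\eta]}) = 1$ for $\eta > 1$, a contradiction. Thus $A(\eta)$ contributes nothing to $\zeta(\eta)$ on a neighborhood of $\eta_0$, and the entire crossing is carried by $C(\eta)$.

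To analyze $C(\eta)$, introduce the maps $i \Phi := \Pi_{[-\beta_2 u_\eta]}(\cdot, 0) \Phi$ and $p v := \int_0^{a_m} b_2(a) v(a) \, \rd a$, so that $C(\eta) = \xi\, i \circ p$ while $\hat{H}_{[-\beta_2 u_\eta]} = p \circ i$. The classical identity $\sigma(XY) \setminus \{0\} = \sigma(YX) \setminus \{0\}$ for products of bounded operators, which preserves algebraic multiplicities when one factor is compact, then matches the nonzero spectrum of $C(\eta)$ with that of $\xi\, \hat{H}_{[-\beta_2 u_\eta]}$. Since $\hat{H}_{[-\beta_2 u_\eta]}$ is a strongly positive compact operator on $\Wqq$ by \eqref{ssss}, the Krein-Rutman theorem yields that $r(\hat{H}_{[-\beta_2 u_\eta]})$ is an algebraically simple eigenvalue strictly dominating all others in modulus; hence $\mu(\eta) := \xi\, r(\hat{H}_{[-\beta_2 u_\eta]})$ is an algebraically simple, strictly dominant eigenvalue of $C(\eta)$.

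By \eqref{u}, $\mu(\eta)$ depends continuously and strictly monotonically on $\eta$, and by \eqref{xi1} we have $\mu(\eta_0) = 1$. Continuity of the family $\eta \mapsto C(\eta)$ in the operator norm then ensures, for some $\delta > 0$, that every other eigenvalue of $C(\eta)$ remains strictly inside the unit disk for $\eta \in (\eta_0 - \delta, \eta_0 + \delta)$. Consequently $\zeta(\eta) = 0$ on $(\eta_0 - \delta, \eta_0)$ and $\zeta(\eta) = 1$ on $(\eta_0, \eta_0 + \delta)$, so that $\mathrm{Ind}(0, K(\eta))$ flips sign across $\eta_0$. The main technical delicacy is the transfer of algebraic simplicity from the Krein-Rutman eigenvalue of $\hat{H}_{[-\beta_2 u_\eta]}$ on $\Wqq$ to the lifted eigenvalue of $C(\eta)$ on $\Wq$; this is handled cleanly by the $\sigma(ip) = \sigma(pi)$ identity, valid because $\mu(\eta_0) = 1 \ne 0$.
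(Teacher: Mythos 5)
Your proof is correct and reaches the same conclusion, but it takes a genuinely different route at the crucial step of establishing the algebraic simplicity of the crossing eigenvalue. The paper's proof, after using Lemma~\ref{L6} to locate any eigenvalue $\mu\ge 1$ of $K(\eta)$ inside the spectrum of $\xi\hat{H}_{[-\beta_2 u_\eta]}$, verifies simplicity of $\mu_*$ for $K(\eta)$ by an explicit hands-on computation: it exhibits the kernel element $(\phi_*,\psi_*)$ through the formulas involving $Z_1[\eta]$, $S\psi_*$, and $\Phi_1$, and then shows $(\phi_*,\psi_*)\notin\mathrm{rg}(K(\eta)-\mu_*)$ by reducing to the Krein--Rutman simplicity of $\mu_*/\xi$ for $\hat{H}_{[-\beta_2 u_\eta]}$. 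You replace this with two abstract structural observations: (a) $K(\eta)$ is block upper-triangular on $\Wq\times\Wq$, so its nonzero spectrum (with algebraic multiplicities, via the Riesz projection of the block resolvent) is the union of the spectra of the diagonal blocks $A(\eta)$ and $C(\eta)$; and (b) writing $C(\eta)=\xi\, i\circ p$ and $\hat{H}_{[-\beta_2 u_\eta]}=p\circ i$ with $p:\Wq\to\Wqq$ the birth-integral and $i:\Wqq\to\Wq$ the evolution-operator lift, the classical identity $\sigma(ip)\setminus\{0\}=\sigma(pi)\setminus\{0\}$ (with multiplicities for $\lambda\ne 0$) transports the Krein--Rutman simplicity and strict dominance of $r(\hat{H}_{[-\beta_2 u_\eta]})$ directly to $C(\eta)$ and thence to $K(\eta)$. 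Your approach is more conceptual, avoids the tedious kernel/range bookkeeping, and makes it transparent that only the $v$-block carries the crossing; the paper's route is more self-contained in that it does not invoke these spectral equivalences, and as a by-product it produces the concrete eigenvector $(\phi_\star,\psi_\star)$ that is reused in Corollary~\ref{C3} and later in the proof of Lemma~\ref{L33}. Both correctly yield $\zeta(\eta)=0$ to the left of $\eta_0$ and $\zeta(\eta)=1$ to the right, whence the sign change.
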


\begin{proof}
First, let $1<\eta<\eta_0$ and suppose there is an eigenvalue \mbox{$\mu>1$} of $K(\eta)$. Let  $(w,v)\in\Wq\times\Wq$ be a corresponding eigenvector. Then Lemma~\ref{L6} yields 
\bqn\label{105}
\mu\le \xi\, r(\hat{H}_{[-\beta_2 u_\eta]})\ .
\eqn
Since $\eta <\eta_0$ we have $u_\eta \le u_{\eta_0}$ owing to Theorem~\ref{A1}. But then, by Lemma~\ref{A2}, \eqref{105}, and the assumption $\mu>1$,
$$
1<\xi r(\hat{H}_{[-\beta_2 u_\eta]})< \xi r(\hat{H}_{[-\beta_2 u_{\eta_0}]})
$$
in contradiction to the definition of $\eta_0$ in \eqref{xi1}. Thus there is no eigenvalue $\mu>1$ of $K(\eta)$ if $1<\eta<\eta_0$, consequently
$$
\mathrm{Ind}(0,K(\eta))=1\ ,\quad 1<\eta<\eta_0\ .
$$
Next, it follows from \cite[II.Lem.5.1.4]{LQPP} and Theorem~\ref{A1} that the evolution operator $\Pi_{[-\beta_2 u_\eta]}(\cdot,0)$ depends continuously on $\eta$ (actually: analytically, cf. the proof of Corollary~\ref{C22}) and hence
$$
\hat{H}_{[-\beta_2 u_\eta]}\longrightarrow \hat{H}_{[-\beta_2 u_{\eta_0}]} \ \, \text{in}\ \, \mathcal{K}(\Wqq)\ \, \text{as}\ \, \eta\longrightarrow \eta_0\ .
$$
According to \cite[IV.$\S$3.5]{Kato},
$$
\lambda_2\big(\xi\hat{H}_{[-\beta_2 u_\eta]}\big)\longrightarrow \lambda_2\big(\xi\hat{H}_{[-\beta_2 u_{\eta_0}]}\big) < r(\xi\hat{H}_{[-\beta_2 u_{\eta_0}]})=1 \ \, \text{as}\ \, \eta\longrightarrow \eta_0\ ,
$$
with $\lambda_2(H)$ denoting the second eigenvalue of a compact operator $H$. Choose $\ve>0$ with
\bqn\label{107}
\lambda_2\big(\xi\hat{H}_{[-\beta_2 u_\eta]}\big) < 1-\ve\ ,\quad \eta_0<\eta<\eta_0+\ve\ .
\eqn
Let $\eta_0\le\eta<\eta_0+\ve$ and let $\mu\ge 1$ be an eigenvalue of $K(\eta)$. Then $\mu\ge 1$ is an eigenvalue of $\xi \hat{H}_{[-\beta_2 u_\eta]}$ due to Lemma~\ref{L6} and thus $\mu=r(\xi\hat{H}_{[-\beta_2 u_\eta]})=:\mu_*$ since $\mu_*$ is the only eigenvalue in $(1-\ve,\infty)$ by \eqref{107}. But $\mu_*$ is a simple eigenvalue of $K(\eta)$. Indeed, noticing that
$$
K(\eta) (\phi,\psi)=\mu_* (\phi,\psi)
$$
is equivalent to
\begin{align*}
\partial_a\phi-\Delta_D\phi &=-2\alpha_1 u_{\eta} \phi+\frac{\alpha_2}{\mu_*} u_{\eta}\psi\ ,& \phi(0)=\frac{\eta}{\mu_*}\,\Phi\ ,\\
\partial_a\psi-\Delta_D\psi &=\beta_2 u_{\eta}\psi\ ,& \psi(0)=\frac{\xi}{\mu_*}\Psi\ ,
\end{align*}
it follows as in the proof of Lemma~\ref{L6} (see \eqref{t1} and \eqref{tt1}) that
$$
\mathrm{ker}\big(K(\eta)-\mu_*\big)=\mathrm{span}\{(\phi_{*},\psi_{*})\}\ ,
$$
where
$$
\psi_{*}:=Z_2[\eta](0,\Psi_1)=\Pi_{[-\beta_2 u_\eta]}(\cdot,0)\, \Psi_1\in \Wqd
$$
with $\Psi_1\in\mathrm{int}(\Wqqp)$ spanning $\mathrm{ker}(\mu_*-\xi\hat{H}_{[-\beta_2 u_\eta]})$ and
\begin{align*}
\Phi_1&:=\frac{\eta}{\mu_*}
\big(1-\frac{\eta}{\mu_*}H_{[2\alpha_1 u_\eta]}\big)^{-1} \int_0^{a_m} b_1(a)(S\psi_{*})(a)\,\rd a\in\Wqqp\  ,\\ 
\phi_{*}&:=\Pi_{[2\alpha_1 u_\eta]}(\cdot,0)\Phi_1+ S\psi_{*} =Z_1[\eta](S\psi_{*},\Phi_1)\in \Wq^+ . 
\end{align*}
Invertibility of $1-\frac{\eta}{\mu_*}H_{[2\alpha_1 u_\eta]}$ is due to $\mu_*\ge 1$, \eqref{sp}, and Lemma~\ref{A2}.
It then merely remains to prove that $\mu_*$ is simple. For, let $(\phi_{*},\psi_{*})\in\mathrm{rg}(K(\eta)-\mu_*)$. Then $$Z_2[\eta](0,\xi V)-\mu_*v=\psi_{*}$$ for some $v\in\Wq$, that is,
$$
\partial_av-\Delta_D v-\beta_2 u_\eta v=-\frac{1}{\mu_*}\big(\partial_a\psi_{*}-\Delta_D\psi_{*}-\beta_2 u_\eta \psi_{*}\big)=0\ ,\quad v(0)=\frac{\xi}{\mu_*}V-\frac{1}{\mu_*}\Psi_1\ .
$$
This readily implies
$$
\big(1-\frac{\xi}{\mu_*}\hat{H}_{[-\beta_2 u_\eta]}\big)\, v(0)=-\frac{1}{\mu_*}\Psi_1
$$
so that 
$$
\Psi_1\in \mathrm{ker}\big(1-\frac{\xi}{\mu_*}\hat{H}_{[-\beta_2 u_\eta]}\big)\cap \mathrm{rg}\big(1-\frac{\xi}{\mu_*}\hat{H}_{[-\beta_2 u_\eta]}\big)\
$$
contradicting the fact that the intersection equals $\{0\}$ since $\mu_*/\xi=r(\hat{H}_{[-\beta_2 u_\eta]})$ is a simple eigenvalue of $\hat{H}_{[-\beta_2 u_\eta]}$. Thus $(\phi_{*},\psi_{*})\notin\mathrm{rg}(K(\xi)-\mu_*)$ and $\mu_*$ is indeed a simple eigenvalue of $K(\eta)$. This ensures
$$
\mathrm{Ind}(0,K(\eta))=-1\ ,\quad \eta_0\le\eta<\eta_0+\ve\ ,
$$
and the assertion follows.
\end{proof}

Recalling the definition of $\Psi_0$ in \eqref{z1} and taking $\eta=\eta_0$ (and so $\mu_*=1$), the proof of Lemma~\ref{L8} reveals:

\begin{cor}\label{C3}
$\mu_\star=1$ is a simple eigenvalue of $K(\eta_0)$. Thus
$$
\Wq\times\Wq=\mathrm{ker}\big(1-K(\eta_0)\big)\oplus \mathrm{rg}\big(1-K(\eta_0)\big)\ ,\qquad \mathrm{ker}\big(1-K(\eta_0)\big)=\mathrm{span}\{(\phi_{\star},\psi_{\star})\} 
$$
with $\psi_{\star}=Z_2[\eta_0](0,\Psi_0)\in\Wqd$, $\Psi_0=\xi\Psi_\star\in\mathrm{int}(\Wqqp)$, and $\phi_{\star}\in\Wqd$.
\end{cor}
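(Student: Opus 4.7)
The plan is to specialize the argument already given in the proof of Lemma~\ref{L8} to the boundary case $\eta=\eta_0$. By \eqref{xi1} we have $\mu_\star := r\big(\xi\hat{H}_{[-\beta_2 u_{\eta_0}]}\big) = 1$, so the value $\mu=1$ plays the role of $\mu_*$ in that proof. Everything in the computation of the kernel and of the range goes through verbatim upon setting $\eta=\eta_0$ and $\mu_*=1$.

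More concretely, first I would compute $\ker(1-K(\eta_0))$. The equation $K(\eta_0)(\phi,\psi)=(\phi,\psi)$ rewrites as the coupled system \eqref{t1}--\eqref{tt1} appearing in the proof of Lemma~\ref{P22}. From the second equation together with \eqref{z1} one obtains $\psi=\mu\psi_\star$ with $\psi_\star=\Pi_{[-\beta_2 u_{\eta_0}]}(\cdot,0)\Psi_0 = Z_2[\eta_0](0,\Psi_0) \in\Wqd$; plugging this into the first equation and using that $1-\eta_0 H_{[2\alpha_1 u_{\eta_0}]}$ is invertible (since $\eta_0 r(H_{[2\alpha_1 u_{\eta_0}]})<\eta_0 r(H_{[\alpha_1 u_{\eta_0}]})=1$ by \eqref{sp} and Lemma~\ref{A2}), one deduces $\phi=\mu\phi_\star$ with $\phi_\star\in\Wqd$ given by the same formula as in Lemma~\ref{L8}. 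This yields $\ker(1-K(\eta_0))=\mathrm{span}\{(\phi_\star,\psi_\star)\}$.

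Next I would establish that $\mu_\star=1$ is algebraically simple, following the final paragraph of the proof of Lemma~\ref{L8} line for line. Suppose toward contradiction that $(\phi_\star,\psi_\star)\in \mathrm{rg}(1-K(\eta_0))$, i.e.\ that there is $v\in\Wq$ with $Z_2[\eta_0](0,\xi V)-v=\psi_\star$. Applying $\partial_a-\Delta_D-\beta_2 u_{\eta_0}$ and using $\psi_\star=Z_2[\eta_0](0,\Psi_0)$ one obtains
\[
\big(1-\xi\hat{H}_{[-\beta_2 u_{\eta_0}]}\big)v(0) = -\Psi_0 ,
\]
so that $\Psi_0$ lies in both $\ker\big(1-\xi\hat{H}_{[-\beta_2 u_{\eta_0}]}\big)$ and $\mathrm{rg}\big(1-\xi\hat{H}_{[-\beta_2 u_{\eta_0}]}\big)$. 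But $\xi\,r(\hat{H}_{[-\beta_2 u_{\eta_0}]})=1$ is a simple eigenvalue by the Krein--Rutman theorem \cite[Thm.3.2]{AmannSIAMReview}, so this intersection is $\{0\}$, contradicting $\Psi_0\in \mathrm{int}(\Wqqp)$.

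Finally, since $K(\eta_0)$ is compact by \eqref{K}, the Riesz--Schauder theory gives the Fredholm decomposition $\Wq\times\Wq = \ker(1-K(\eta_0))\oplus\mathrm{rg}(1-K(\eta_0))$ as soon as the algebraic multiplicity of the eigenvalue $1$ equals its geometric multiplicity, which is what simplicity asserts. The only conceptual point to be careful about is that the argument in Lemma~\ref{L8} was stated for $\eta>\eta_0$ where $\mu_*>1$ is used to invoke \eqref{sp} and Lemma~\ref{A2} for the invertibility of $1-(\eta/\mu_*)H_{[2\alpha_1 u_\eta]}$; at $\eta=\eta_0$ with $\mu_\star=1$ the same inequality $\eta_0 r(H_{[2\alpha_1 u_{\eta_0}]})<1$ holds by strict monotonicity in the potential (Lemma~\ref{A2}) and \eqref{sp}, so no difficulty arises. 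I do not expect a genuine obstacle here; the statement is essentially a summary of what has already been proved, restricted to the bifurcation point.
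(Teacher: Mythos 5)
Your proposal is correct and follows precisely the route the paper intends: the paper's own ``proof'' of Corollary~\ref{C3} is just the one-line remark that one sets $\eta=\eta_0$, $\mu_*=1$ in the proof of Lemma~\ref{L8}, and your write-up reproduces exactly the kernel computation, the invertibility of $1-\eta_0 H_{[2\alpha_1 u_{\eta_0}]}$ via \eqref{sp} and Lemma~\ref{A2}, the range argument showing $(\phi_\star,\psi_\star)\notin\mathrm{rg}(1-K(\eta_0))$, and the Riesz--Schauder decomposition. Your closing caveat is also already handled in the paper, since the case analysis in Lemma~\ref{L8} explicitly includes $\eta_0\le\eta<\eta_0+\ve$.
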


Corollary~\ref{C22} and Lemma~\ref{L8} warrant that we may apply Rabinowitz' global alternative \cite[Cor.6.3.2]{LopezGomezChapman} to \eqref{19t}. Hence, we obtain a continuum $\mathfrak{C}'_1$ of solutions $(\eta,u,v)$ in $\R\times\Wq\times\Wq$ to \eqref{1coo}-\eqref{2coo} subject to \eqref{3}-\eqref{4} emanating from $(\eta_0,u_{\eta_0},0)\in\mathfrak{B}_1$. In combination with the unilateral global bifurcation result \cite[Thm.6.4.3]{LopezGomezChapman} and Corollary~\ref{C3} we derive that $\mathfrak{C}'_1$ satisfies the alternatives
\begin{itemize}
\item[(i)] $\mathfrak{C}'_1$ is unbounded in $\R\times\Wq\times\Wq$, or
\item[(ii)] there is $\eta\in\Sigma\setminus\{\eta_0\}$ with $(\eta,u_\eta,0)\in \mathfrak{C}'_1$, or
\item[(iii)] there is $(\eta,u_{\eta}+w,v)\in\mathfrak{C}'_1$ with $(w,v)\in\mathrm{rg}(1-K(\eta_0))\setminus\{(0,0)\}$.
\end{itemize}
By Lemma~\ref{P22}, $\mathfrak{C}'_1$ close to $(\eta_0,u_{\eta_0},0)$ coincides with $\mathfrak{C}_1\subset \R^+\times\Wqd\times\Wqd$ suggesting to abuse notation by putting
$$
\mathfrak{C}_1:=\mathfrak{C}'_1\cap (\R^+\times\Wqd\times\Wqd)\not=\emptyset\ .
$$
In fact, we have:

\begin{lem}\label{L33}
$\mathfrak{C}_1$ is unbounded in $\R^+\times\Wqd\times\Wqd$.
\end{lem}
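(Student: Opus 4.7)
The plan is to apply the three-way alternative (i)--(iii) just stated for $\mathfrak{C}'_1$, rule out (ii) and (iii), and transfer the resulting unboundedness to the positive cone $\R^+ \times \Wqd \times \Wqd$.

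To rule out alternative (ii), suppose $\eta \in \Sigma \setminus \{\eta_0\}$ with $(\eta, u_\eta, 0) \in \mathfrak{C}'_1$. By Lemma~\ref{L6}, either $\eta > 1$ and $\xi\, r(\hat{H}_{[-\beta_2 u_\eta]}) = 1$, or $\eta = 1$. The first case contradicts the uniqueness of $\eta_0$ guaranteed by \eqref{xi1} together with the strict monotonicity \eqref{u}. The second case would require $1/\xi$ to be an eigenvalue of $\hat{H}_{[0]}$, contradicting $r(\hat{H}_{[0]}) = 1 < 1/\xi$ obtained from \eqref{1mio} and $\xi < 1$.

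To rule out alternative (iii) and simultaneously propagate positivity, I would establish cone-preservation along $\mathfrak{C}_1$. Near $(\eta_0, u_{\eta_0}, 0)$ this is provided by Lemma~\ref{P22}. Globally, suppose $(\eta_n, u_n, v_n) \in \mathfrak{C}_1$ converges in $\mathfrak{C}'_1$ to $(\eta^*, u^*, v^*)$ with $u^*, v^* \in \Wq^+$ but not both in $\Wqd$. Then the representations \eqref{darst1}--\eqref{darst2} combined with the strong positivity \eqref{ssss} force either $u^* \equiv 0$ or $v^* \equiv 0$. The case $u^* \equiv 0$ forces $v^* \equiv 0$ as well, by Theorem~\ref{A1} applied to \eqref{Aee2} with $\xi < 1$, placing the limit in $\mathfrak{B}_0$; the case $v^* \equiv 0$ sends it into $\overline{\mathfrak{B}_1}$. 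Passing to the limit in the spectral identities \eqref{sp1}--\eqref{sp2} valid along $\mathfrak{C}_1$ then yields the spectral condition already handled in the discussion of (ii), so the same uniqueness argument forces $\eta^* = \eta_0$. Hence $\mathfrak{C}_1$ can exit the open positive cone only at the bifurcation point, and this excludes alternative (iii), which would otherwise provide a distinct boundary intersection of $\mathfrak{C}'_1$ with $\mathfrak{B}_1$ via the kernel--range decomposition of Corollary~\ref{C3}.

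Therefore alternative (i) holds: $\mathfrak{C}'_1$ is unbounded in $\R \times \Wq \times \Wq$. The cone-preservation step further gives $\mathfrak{C}_1 \cup \{(\eta_0, u_{\eta_0}, 0)\} = \mathfrak{C}'_1$, so the unboundedness transfers to $\mathfrak{C}_1$ in $\R^+ \times \Wqd \times \Wqd$. The main obstacle is the cone-preservation step — specifically, tying the algebraic condition $(w, v) \in \mathrm{rg}(1 - K(\eta_0)) \setminus \{(0, 0)\}$ of alternative (iii) to a geometric exit of the positive branch at a point of $\mathfrak{B}_1$ distinct from $(\eta_0, u_{\eta_0}, 0)$, and thereby reducing it to the already-settled uniqueness of the bifurcation point.
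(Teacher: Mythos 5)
Your exit-point analysis (the first and second paragraphs, culminating in passing to the limit in \eqref{sp1}--\eqref{sp2}) is sound in spirit and is a workable alternative to the Blat--Brown mean-value-theorem argument the paper uses; continuity of $h \mapsto r(\hat{H}_{[h]})$ does deliver the identity $\xi\, r(\hat{H}_{[-\beta_2 u_\eta]}) = 1$ at the putative exit point and hence $\eta = \eta_0$. (A small caveat in your (ii) argument: Lemma~\ref{L6} only says $1/\xi$ is \emph{some} eigenvalue of $\hat{H}_{[-\beta_2 u_\eta]}$, not the spectral radius; you still need positivity of the eigenvector to invoke \eqref{u}, which your limiting argument from \eqref{sp1}--\eqref{sp2} supplies but your first paragraph does not. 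Also, when the limit lands in $\mathfrak{B}_0$ you should say why that is excluded --- the paper uses that the only nonnegative solutions near $\mathfrak{B}_0$ are on $\mathfrak{B}_1$, while here $v_n \not\equiv 0$.)

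The genuine gap is in your disposal of alternative~(iii). You try to reduce it to cone-preservation, asserting that (iii) ``would otherwise provide a distinct boundary intersection of $\mathfrak{C}'_1$ with $\mathfrak{B}_1$.'' That is not what (iii) says: (iii) is the purely algebraic condition that some $(\eta, u_\eta + w, v) \in \mathfrak{C}'_1$ has $(w,v) \in \mathrm{rg}(1 - K(\eta_0)) \setminus \{(0,0)\}$. This can perfectly well happen at a \emph{coexistence} solution with $u, v \in \Wqd$ and $\eta$ far from $\eta_0$; nothing about (iii) forces the continuum to touch $\mathfrak{B}_1$ or the boundary of the cone. The paper rules out (iii) by a separate positivity argument: assuming $(w,v) = (1 - K(\eta_0))(f,g)$ with $v \in \Wqd$, one sets $p := g - v + \tau\psi_\star$ for $\tau$ large enough that $p(0) = g(0) - v(0) + \tau\Psi_0 \in \mathrm{int}(\Wqqp)$, computes from the $Z_2$-component that $(1 - \xi\hat{H}_{[-\beta_2 u_{\eta_0}]})\,p(0) = \xi V \in \Wqqp\setminus\{0\}$, and then invokes the Krein--Rutman alternative \cite[Thm.3.2]{AmannSIAMReview} together with $\xi\,r(\hat{H}_{[-\beta_2 u_{\eta_0}]}) = 1$ to conclude that this equation admits no solution in $\mathrm{int}(\Wqqp)$ --- a contradiction. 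Without an argument of this kind, alternative (iii) remains open and the conclusion does not follow.
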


\begin{proof}
Suppose $\mathfrak{C}'_1$ leaves $\R^+\times\Wqd\times\Wqd$ at some point $(\eta,u,v)\in \mathfrak{C}'_1$ different from $(\eta_0,u_{\eta_0},0)$ and let
$(\eta_j,u_j,v_j)\in \mathfrak{C}_1$
such that
$$
(\eta_j,u_j,v_j)\longrightarrow (\eta,u,v)\quad\text{in}\quad \R\times \Wq\times\Wq\ .
$$
Since obviously $\eta\ge 0$, $u\ge 0$, and $v\ge 0$, the only possibility is that $u= 0$ or $v= 0$. However, as the only solutions in $\R^+\times\Wq^+\times\Wq^+$ close to $\mathfrak{B}_0$ lie on $\mathfrak{B}_1$, the case $(u,v)= (0,0)$ is impossible since $v_j\not= 0$. If $u= 0$ but $v\not= 0$, then $(\eta,u,v)=(\eta,0,v)\in\mathfrak{C}'_1$ and $v\in\Wqd$ solves
$$
\partial_a v-\Delta_D v=-\beta_1 v^2\ ,\quad v(0)=\xi V\ 
$$
with $\xi<1$ contradicting Theorem~\ref{A1}. Consequently, $v= 0$ but $u\not= 0$, thus $u\in\Wqd$ solves
$$
\partial_a u-\Delta_D u=-\alpha_1 u^2\ ,\quad u(0)=\eta U\ ,
$$
whence $\eta>1$ and $u=u_\eta$ by Theorem~\ref{A1}. Therefore, $(\eta,u,v)=(\eta,u_\eta,0)\in\mathfrak{C}'_1$ and we may assume that $\eta_j>1$.
To demonstrate that this also leads to a contradiction, we adapt an argument of \cite[Thm.3.1]{BlatBrown2}. Put $z_j:=(w_j,v_j)$, where $w_j:=u_j-u_{\eta_j}$, and note that $z_j\rightarrow (0,0)$ as $j\rightarrow \infty$ by the previous observation. Moreover, since $u_j,v_j\ge 0$ and $\eta_j>1$, we obtain from
$$
\partial_a u_j-\Delta_D u_j=-\alpha_1 u_j^2+\alpha_2 v_j u_j\ge -\alpha_1 u_j^2\ ,\quad u_j(0)=\eta_j U_j\ ,
$$
that $u_j\ge u_{\eta_j}$ by invoking Lemma~\ref{A2}, whence $z_j\in\Wq^+\times\Wq^+$.
We then define 
$$
Q:\R\times\Wq^2\rightarrow\Wq^2\ ,\quad Q(\zeta,z):=K(\zeta)z-R(\zeta,z)
$$ 
and observe that $Q$ is differentiable with respect to $z\in\Wq^2$, $Q(\zeta,0)=0$ for $\zeta\in\R$, and $Q(\eta_j,z_j)=z_j$. The mean value theorem ensures
$$
z_j-Q_z(\eta,0)z_j=\int_0^1\big[Q_z(\eta_j,sz_j)z_j-Q_z(\eta,0)z_j\big]\,\rd s
$$
and hence, setting $m_j:=z_j/\|z_j\|_{\Wq^2}\in\Wq^+\times\Wq^+$ and taking $Q_z(\eta,0)=K(\eta)$ into account,
$$
m_j-K(\eta)m_j=\int_0^1\big[Q_z(\eta_j,sz_j)m_j-Q_z(\eta,0)m_j\big]\,\rd s\longrightarrow 0\ \, \text{as}\ \, j\rightarrow\infty
$$
by the boundedness of $(m_j)_{j\in\N}$, $(\eta_j,z_j)\rightarrow (\eta,0)$, and Lebesgue's theorem. As $K(\eta)$ is compact, this readily implies the existence of $m\in\Wq^+\times\Wq^+$ with $\|m\|_{\Wq^2}=1$ and $m=K(\eta)m$. Owing to Lemma~\ref{L6} we conclude that $1/\xi$ is an eigenvalue of $\hat{H}_{[-\beta_2 u_\eta]}$ with positive eigenvector. Hence $1=\xi r(\hat{H}_{[-\beta_2 u_\eta]})$ due to the Krein-Rutman theorem (see Lemma~\ref{A3a}) yielding \mbox{$\eta=\eta_0$} what is impossible since $(\eta,u,v)$ then coincides with $(\eta_0,u_{\eta_0},0)$. Therefore, $\mathfrak{C}_1=\mathfrak{C}_1'$ does not leave $\R^+\times\Wqd\times\Wqd$ except at $(\eta_0,u_{\eta_0},0)$. 

As a consequence of the preceding observation, alternative (ii) above can be ruled out. Suppose then that alternative (iii) above occurs, i.e. let $(\eta,u_{\eta}+w,v)\in\mathfrak{C}'_1$ be such that
$$
(0,0)\not=(w,v)=(1-K(\eta_0)) (f,g)\ 
$$
for some $(f,g)\in\Wq\times\Wq$. To derive a contradiction we argue similarly as in the proof of Lemma~\ref{P22}.
As $v\in\Wqd$, we have $v(0)=\xi V\in\Wqqp\setminus\{0\}$. Recall $\psi_{\star}(0)=\Psi_0\in\mathrm{int}(\Wqqp)$ from Corollary~\ref{C3} so that we may choose $\tau>0$ with 
$$
g(0)-v(0)+\tau\Psi_0\in\mathrm{int}(\Wqqp)\ .
$$ 
Note that
$$
v=g-Z_2[\eta_0](0,\xi G)\ ,\quad \psi_{\star}=Z_2[\eta_0](0,\xi\Psi_{\star})\ ,\quad p:=g-v+\tau\psi_{\star}=Z_2[\eta_0]\big(0,\xi(G+\tau\Psi_{\star})\big)\ .
$$
The last equality reads
$$
\partial_a p-\Delta_D p-\beta_2u_{\eta_0} p=0\ ,\quad p(0)=\xi(G+\tau\Psi_{\star})=\xi P +\xi V\ ,
$$
from which we deduce that
$$
\big(1-\xi \hat{H}_ {[-\beta_2 u_{\eta_0}]}\big) p(0)=\xi V \in\Wqqp\setminus\{0\}
$$
with $p(0)\in\mathrm{int}(\Wqqp)$ by the choice of $\tau$. However, this equation has no positive solution owing to \cite[Thm.3.2]{AmannSIAMReview} and the definition of $\xi$ in \eqref{xi1}. This shows that alternative (iii) above is impossible as well and the only remaining possibility is that $\mathfrak{C}_1=\mathfrak{C}'_1$ is unbounded in $\R^+\times\Wqd\times\Wqd$. 
\end{proof}

We remark that the bifurcation point $(\eta_0,u_{\eta_0},0)$ is unique:

\begin{cor}\label{C13}
There is no other bifurcation point on $\mathfrak{B}_1$ to positive coexistence solutions than $(\eta_0,u_{\eta_0},0)$.
\end{cor}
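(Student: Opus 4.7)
The strategy is to adapt the normalization-and-limit argument appearing in the final paragraph of the proof of Lemma~\ref{L33}, which already packages everything needed. Suppose $(\eta_*, u_{\eta_*}, 0)\in\mathfrak{B}_1$ (so $\eta_*>1$) is a bifurcation point and let $(\eta_j,u_j,v_j)\in\R^+\times\Wqd\times\Wqd$ be a sequence of coexistence solutions with $(\eta_j,u_j,v_j)\to(\eta_*,u_{\eta_*},0)$ in $\R\times\Wq\times\Wq$. The goal is to force $\eta_*=\eta_0$, which by \eqref{u} is the unique $\eta>1$ satisfying $\xi\, r(\hat{H}_{[-\beta_2 u_\eta]})=1$.

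First I would exploit the cooperative structure exactly as in the proof of Lemma~\ref{L33}: since $v_j\ge 0$, the equation for $u_j$ gives $\partial_a u_j-\Delta_D u_j\ge -\alpha_1 u_j^2$ with $u_j(0)=\eta_j U_j$, so Lemma~\ref{A2} yields $u_j\ge u_{\eta_j}$. Setting $w_j:=u_j-u_{\eta_j}\ge 0$ and $z_j:=(w_j,v_j)\in\Wq^+\times\Wq^+$, the reformulation \eqref{19t} reads $z_j=K(\eta_j)z_j-R(\eta_j,z_j)$. Normalize $m_j:=z_j/\|z_j\|_{\Wq\times\Wq}\in\Wq^+\times\Wq^+$. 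Using the mean value theorem in $z$, the smallness estimate \eqref{RR}, and the continuity \eqref{K} of $\eta\mapsto K(\eta)$, one deduces $m_j-K(\eta_*)m_j\to 0$. Compactness of $K(\eta_*)$ then delivers a subsequential limit $m\in\Wq^+\times\Wq^+$ with $\|m\|_{\Wq\times\Wq}=1$ and $K(\eta_*)m=m$.

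Finally I would invoke Lemma~\ref{L6} with eigenvalue $\mu=1$ applied to the nonnegative eigenvector $m=(m_1,m_2)$. The alternative $\mu=\eta_*=1$ is excluded because $\eta_*>1$; in particular the second component $m_2$ cannot vanish (otherwise that forbidden branch of the lemma would be forced). Lemma~\ref{L6} therefore yields that $1/\xi$ is an eigenvalue of $\hat{H}_{[-\beta_2 u_{\eta_*}]}$ with positive eigenvector $m_2(0)\in\Wqqp\setminus\{0\}$, and Krein--Rutman (Lemma~\ref{A3a}) upgrades this to $\xi\, r(\hat{H}_{[-\beta_2 u_{\eta_*}]})=1$. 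The strict monotonicity in \eqref{u} together with the defining identity \eqref{xi1} then force $\eta_*=\eta_0$.

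The subtlest point is the verification that the non-zero limit $m$ has non-vanishing second component, so that the nontrivial branch of Lemma~\ref{L6} applies; as indicated, this is automatic from the case distinction in that lemma combined with $\eta_*>1$. Apart from this, the proof is essentially a distillation of machinery already established in Lemma~\ref{L6}, Lemma~\ref{L33}, and the spectral facts \eqref{u}--\eqref{xi1}, so no new technical input is required.
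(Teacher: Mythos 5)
Your proposal is correct and follows essentially the same route as the paper: the paper's proof of Corollary~\ref{C13} explicitly reduces to ``the same argument as in the proof of Lemma~\ref{L33}'' to obtain a nonnegative eigenvector of $K(\eta_*)$ for eigenvalue $1$, and then applies Lemma~\ref{L6} plus the Krein--Rutman characterization to identify $\eta_*=\eta_0$. Your expanded write-up of the normalization/limit step and of why the second component cannot vanish (since $\eta_*>1$ rules out the alternative $\mu=\eta=1$ in Lemma~\ref{L6}) is exactly the content the paper leaves implicit.
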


\begin{proof}
Suppose $(\eta,u_{\eta},0)\in \mathfrak{B}_1$ is a bifurcation point to positive coexistence solutions. Approximating this point by positive solutions we derive as in the proof of Lemma~\ref{L33} that $1$ is an eigenvalue of $K(\eta)$ with an eigenvector in \mbox{$\Wq^+\times\Wq^+$} so that, according to Lemma~\ref{L6}, $1/\xi$ is an eigenvalue of $\hat{H}_{[-\beta_2 u_\eta]}$ with positive eigenvector. As above, this implies $1=\xi r(\hat{H}_{[-\beta_2 u_\eta]})$ due to the Krein-Rutman theorem (see Lemma~\ref{A3a}), whence \mbox{$\eta=\eta_0$}.
\end{proof}

This completes the proof of Theorem~\ref{T2}. It remains to give a more precise characterization of the global nature of $\mathfrak{C}_1$ as stated in Corollary~\ref{C0}:

\begin{cor}\label{L3}
The continuum $\mathfrak{C}_1$ is unbounded with respect to both the parameter $\eta$ and the $u$-component in $\Wq$, or with respect to the $v$-component in $\Wq$. If \eqref{B} holds
for some $s>0$, then $\mathfrak{C}_1$ is unbounded with respect to the $u$-component in $\Wq$.
\end{cor}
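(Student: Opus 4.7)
The plan is to derive both assertions from Lemma~\ref{L33} (unboundedness of $\mathfrak{C}_1$ in $\R^+\times\Wqd\times\Wqd$) by combining the spectral identity~\eqref{sp1}, the maximal-regularity estimate of Lemma~\ref{L00}, the embedding~\eqref{emb}, and comparison of $u$ and $v$ with spatially constant logistic super-solutions.

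For the weak dichotomy I would argue by contradiction, assuming $\|v\|_{\Wq}$ is bounded along $\mathfrak{C}_1$, hence $\|v\|_\infty\le C_1$ by~\eqref{emb}. First I would rule out that $\|u\|_{\Wq}$ is also bounded: in that case $\|u\|_\infty$ is bounded, $|\alpha_1u-\alpha_2v|\le M$, and monotonicity of the spectral radius gives $r(H_{[\alpha_1u-\alpha_2v]})\ge r(H_{[M]})>0$, whence~\eqref{sp1} forces $\eta$ bounded, contradicting Lemma~\ref{L33}. Next I would rule out that $\eta\le E$ along $\mathfrak{C}_1$: applying Lemma~\ref{L00} to a sequence with $\|u_j\|_{\Wq}\to\infty$ forces $\|u_j\|_\infty\to\infty$, but comparing $u_j$ with the unique positive solution $\tilde u_\eta$ of the modified scalar problem
\begin{equation*}
\partial_a\tilde u-\Delta_D\tilde u=-\alpha_1\tilde u^2+\alpha_2C_1\tilde u\,,\qquad\tilde u(0)=\eta\int_0^{a_m}b_1(a)\tilde u(a)\,\rd a\,,
\end{equation*}
(which exists for $\eta$ above $1/r(H_{[-\alpha_2C_1]})$ and is uniformly $L_\infty$-bounded on $\eta\in(1/r(H_{[-\alpha_2C_1]}),E]$ by the argument of Theorem~\ref{A1} adapted to this equation) yields $\|u_j\|_\infty\le\|\tilde u_E\|_\infty<\infty$, a contradiction. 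As $\mathfrak{C}_1$ is connected, its $\eta$-projection is an interval unbounded above and accumulating at $\eta_0$, and therefore contains a half-line $(\eta_\ast,\infty)$.

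For the sharper statement under~\eqref{B} I would assume $\|u\|_{\Wq}$ is bounded on $\mathfrak{C}_1$, so $\|u\|_\infty\le C'$; by the weak alternative just proved, $\|v\|_\infty$ must blow up along some sequence $(\eta_j,u_j,v_j)\in\mathfrak{C}_1$. Applying the parabolic maximum principle to $\partial_av_j-\Delta_Dv_j\le v_j(\beta_2C'-\beta_1v_j)$ with spatially constant super-solutions obtained from the logistic ODE $\dot m=m(\beta_2C'-\beta_1m)$ gives, uniformly in $\|v_j(0)\|_\infty$, the $a$-pointwise bound
\begin{equation*}
v_j(a,x)\le\frac{\beta_2C'/\beta_1}{1-e^{-\beta_2C'a}}\,,\qquad a\in(0,a_m],\ x\in\Omega\,,
\end{equation*}
obtained by passing to the limit $\|v_j(0)\|_\infty\to\infty$ in the explicit logistic formula. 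Enlarging $C'$ if necessary so that $s:=\beta_2C'$ satisfies~\eqref{B} (admissible since only an upper bound on $u$ is needed and~\eqref{B} is monotone in $s$), insertion into the nonlocal condition~\eqref{4} produces the uniform estimate
\begin{equation*}
\|v_j(0)\|_\infty\le\frac{\xi\beta_2C'}{\beta_1}\int_0^{a_m}\frac{b_2(a)}{1-e^{-sa}}\,\rd a<\infty\,,
\end{equation*}
contradicting $\|v_j(0)\|_\infty\to\infty$ (which itself follows from the crude bound $\|v_j\|_\infty\le\max(\|v_j(0)\|_\infty,\beta_2C'/\beta_1)$).

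The main obstacle will be the comparison $u_j\le\tilde u_{\eta_j}$ in Part~1 together with the existence and $L_\infty$-boundedness of $\tilde u_\eta$: this requires extending the arguments of Theorem~\ref{A1} from the pure scalar logistic equation to one with an additional linear reaction term $\alpha_2C_1\tilde u$, which should be a routine adaptation of the sub-/super-solution and Krein-Rutman techniques used in the appendix. The role of~\eqref{B} in Part~2 is precisely to tame the $1/a$-singularity at $a=0$ of the solution-independent super-solution $m_\infty(a)=(\beta_2C'/\beta_1)/(1-e^{-\beta_2C'a})$ against the birth profile $b_2$.
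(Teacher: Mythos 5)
The second half of your proposal (the refinement under hypothesis~\eqref{B}) matches the paper's argument almost verbatim: the paper also compares $v$ with a spatially constant solution of the logistic ODE \(\dot f = -\beta_1 f^2 + m f\) with \(m=\beta_2 M\), then bounds \(f(a)\le m/\bigl(\beta_1(1-e^{-ma})\bigr)\) uniformly in \(f(0)=\|v(0)\|_\infty\), and feeds this into the nonlocal birth condition \eqref{4} together with \eqref{B} to get a uniform bound on \(\|v(0)\|_\infty\); Lemma~\ref{L00} then bounds the \(v\)-component in \(\Wq\) and contradicts Lemma~\ref{L33}. Your passage to the limit \(\|v_j(0)\|_\infty\to\infty\) in the logistic formula is the same computation in different clothing, and your observation that \eqref{B} is monotone in \(s\) (so that one may enlarge \(C'\)) is the same device the paper uses by choosing \(M>s/\beta_2\).

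For the weak dichotomy, however, your route diverges from the paper's and carries a real load‑bearing gap that you have correctly flagged. The paper's argument for this part is short: it uses only the comparison \(u\ge u_\eta\) for \(\eta>1\) (from Lemma~\ref{A3}, since \(\alpha_2 vu\ge 0\)) together with \(\|u_\eta(0)\|_\infty\to\infty\) as \(\eta\to\infty\) (Theorem~\ref{A1}) and Lemma~\ref{L00}; no auxiliary scalar problem is introduced. Your Step~1 (spectral‑radius monotonicity from \eqref{sp1} and Lemma~\ref{A2} to force \(\eta\) bounded when both \(\|u\|_\infty\) and \(\|v\|_\infty\) are bounded) is correct and is a legitimate substitute for the paper's \(u\ge u_\eta\) comparison. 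The problem is your Step~2: to rule out the case ``\(\eta\) bounded, \(v\) bounded, \(u\) unbounded'' you compare \(u_j\) with a positive solution \(\tilde u_\eta\) of the \emph{modified} scalar problem \(\partial_a\tilde u-\Delta_D\tilde u=-\alpha_1\tilde u^2+\alpha_2C_1\tilde u\) with the same nonlocal condition. Neither the existence, uniqueness, and \(L_\infty\)-bound for \(\tilde u_\eta\), nor the comparison principle that would give \(u_j\le\tilde u_{\eta_j}\), are provided anywhere in the paper: the appendix only treats the pure logistic equation \eqref{Ae1} (Theorem~\ref{A1}) and the comparison Lemmas~\ref{A3},~\ref{A3a} are formulated with the bare operator \(-\Delta_D\) and a \emph{nonnegative} perturbation of the \emph{wrong sign} for a cooperative interaction. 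Adapting them to the shifted operator \(-\Delta_D-\alpha_2C_1\) is plausible but is genuine extra work, not a citation — and it is precisely the step on which your Part~1 hinges, so ``routine adaptation'' is optimistic rather than a proof. In short: your Part~2 is the paper's proof; your Part~1 is a different and more detailed argument whose crucial comparison with \(\tilde u_\eta\) remains to be justified from scratch.
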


\begin{proof}
(i) We have $u\ge u_\eta$ for any $(\eta,u,v)\in\mathfrak{C}_1$ with $\eta>1$ by the comparison principle of Lemma~\ref{A3} since
$$
\partial_a u-\Delta_D u=-\alpha_1 u^2+\alpha_2 vu\ge-\alpha_1 u^2\ ,\quad u(0)=\eta U\ .
$$ 
Since $\| u_\eta (0)\|_\infty\rightarrow\infty$ as $\eta\rightarrow\infty$ according to Theorem~\ref{A1}, we conclude that $\mathfrak{C}_1$ is unbounded with respect to $\eta$ only if it is unbounded with respect to the $u$-component in $\Wq$. 

(ii) Next suppose \eqref{B} and that there is $M>s/\beta_2$ such that $\|u(a)\|_\infty\le M$, $a\in J$, for all $(\eta,u,v)\in \mathfrak{C}_1$. Noticing
$$
\partial_a v-\Delta_D v=-\beta_1 v^2+\beta_2 u v\le -\beta_1 v^2+\beta_2 M v\ ,\,\quad  a\in (0,a_m)\ ,\quad x\in\Om\ ,
$$
it follows from the parabolic maximum principle \cite[Thm.13.5]{DanersKochMedina} that $v(a)\le f(a)$ on $\bar{\Om}$ for $a\in J$, where 
$$
f(a)\,:=m\,\|v(0)\|_\infty\,\left(\beta_1\,\|v(0)\|_\infty\big(1-e^{-ma}\big)+me^{-ma}\right)^{-1}\ ,\quad a\in J\ ,
$$
with $m:=\beta_2 M>s$ satisfies
$$
f'(a)=-\beta_1 f^2(a)+mf(a)\ ,\quad a\in J\ ,\qquad f(0)=\|v(0)\|_\infty\ .
$$ 
Thus \eqref{4} and \eqref{B} imply
$$
v(0)=\xi V\le  \xi \int_0^{a_m} b_2(a) f(a)\, \rd a \le \frac{\xi m}{\beta_1}\int_0^{a_m} b_2(a) (1-e^{-sa})^{-1}\, \rd a <\infty\quad\text{on}\ \bar{\Om}\ ,
$$
and so, owing to the definition of $f$, there is some $c>0$ such that $\|v(a)\|_\infty\le c$, $a\in J$ for all $(\eta,u,v)\in \mathfrak{C}_1$. Hence, $\mathfrak{C}_1$ is bounded with respect to the $v$-component by Lemma~\ref{L00} contradicting our findings in~(i). Consequently, if \eqref{B} holds, then $\mathfrak{C}_1$ is unbounded with respect to the $u$-component in $\Wq$.
\end{proof}

\section{Cooperative Systems with $\xi>1$: Proof of Theorem~\ref{T1}}\label{sec 2b1}

We still focus our attention on \eqref{1coo}-\eqref{2coo} subject to \eqref{3}-\eqref{4}, but let now $\xi>1$ be arbitrarily fixed for the remainder of this section and put
 \bqn\label{17}
 \eta_1:=\eta_1(\xi):=\frac{1}{r(H_{[-\alpha_2 v_\xi]})}\ .
\eqn
Then $\eta_1\in (0,1)$ according to \eqref{1mio} and Lemma~\ref{A2}. The Krein-Rutman theorem ensures
\bqn\label{z}
\mathrm{ker}\big(1-\eta_1H_{[-\alpha_2 v_\xi]}\big)=\mathrm{span}\{\Phi^0\}\quad\text{with}\quad \Phi^0\in \mathrm{int}(\Wqqp)\ .
\eqn
We first prove local bifurcation of a continuous curve from $(\eta_1(\xi),0,v_\xi)\in\mathfrak{B}_2$ by invoking the theorem of Crandall-Rabinowitz \cite{CrandallRabinowitz}. The present situation, however, turns out to be simpler than in the previous section.

\begin{lem}\label{L1}
A local continuous curve $\mathfrak{C}_2$ of positive coexistence solutions to \eqref{1coo}-\eqref{2coo} subject to \eqref{3}-\eqref{4} bifurcates from $(\eta_1(\xi),0,v_\xi)\in\mathfrak{B}_2$, and all positive coexistence solutions near  $(\eta_1(\xi),0,v_\xi)$ lie on this curve.
\end{lem}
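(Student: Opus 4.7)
The plan is to follow the template of the proof of Lemma~\ref{P22} and apply the Crandall-Rabinowitz theorem \cite[Thm.1.7]{CrandallRabinowitz} at the point $(\eta_1,0,v_\xi)\in\mathfrak{B}_2$. The situation here is notably simpler than in Lemma~\ref{P22}: when one linearises around the semitrivial state $(0,v_\xi)$, the $u$-equation decouples from the perturbation of $v$, so the kernel of the linearisation can be pinned down in a single step rather than two. First, I would translate the bifurcation point to the origin via $v=v_\xi+z$ so that, using maximal $L_q$-regularity and $T:=(\partial_a-\Delta_D,\gamma_0)^{-1}$, the system \eqref{1coo}-\eqref{2coo}, \eqref{3}-\eqref{4} takes the equivalent form $F(\eta,u,z)=0$ with
$$
F(\eta,u,z):=\left(\begin{matrix} u-T\big(-\alpha_1 u^2+\alpha_2(v_\xi+z) u\, , \,\eta U\big)\\ z-T\big(-\beta_1 z^2-2\beta_1 v_\xi z+\beta_2 u(v_\xi+z)\, ,\, \xi Z\big)\end{matrix}\right),
$$
where capital letters denote integrals against the appropriate birth profiles as in \eqref{nota}.

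Computing $F_{(u,z)}(\eta_1,0,0)(\phi,\psi)$, the first component of the kernel equation reduces to $\phi(0)=\eta_1 H_{[-\alpha_2 v_\xi]}\phi(0)$, so by \eqref{17} and \eqref{z} one has $\phi=\mu\phi_\star$ with $\phi_\star:=\Pi_{[-\alpha_2 v_\xi]}(\cdot,0)\Phi^0$. Substituting this into the second equation, I would invert $1-\xi\hat{H}_{[2\beta_1 v_\xi]}$, which is permissible since $\xi\,r(\hat{H}_{[2\beta_1 v_\xi]})<\xi\,r(\hat{H}_{[\beta_1 v_\xi]})=1$ by Lemma~\ref{A2} and \eqref{sp}, to obtain a unique $\psi=\mu\psi_\star$. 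This shows $\mathrm{ker}(F_{(u,z)}(\eta_1,0,0))=\mathrm{span}\{(\phi_\star,\psi_\star)\}$, and the compactness observation \eqref{compp} together with the compact embedding $W_q^2\dhr\Wqq$ gives $F_{(u,z)}(\eta_1,0,0)=1-\hat T$ with $\hat T$ compact, so the range also has codimension one.

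The hard part is the transversality condition. The derivative $F_{\eta,(u,z)}(\eta_1,0,0)(\phi_\star,\psi_\star)$ has zero second component and a first component proportional to $-T\big(0,H_{[-\alpha_2 v_\xi]}\Phi^0\big)=-T(0,\Phi^0/\eta_1)$. If this were in the range of $F_{(u,z)}(\eta_1,0,0)$, the first component would force some $f\in\Wq$ with $(1-\eta_1 H_{[-\alpha_2 v_\xi]})f(0)=-\Phi^0/\eta_1$. Mimicking the endgame of the proof of Lemma~\ref{P22}, I would choose $\tau>0$ so that $q(0):=\tau\Phi^0-f(0)\in\mathrm{int}(\Wqqp)$; then $(1-\eta_1 H_{[-\alpha_2 v_\xi]})q(0)=\Phi^0/\eta_1\in\Wqqp\setminus\{0\}$, contradicting \cite[Thm.3.2]{AmannSIAMReview} together with \eqref{17}.

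Once transversality is established, Crandall-Rabinowitz yields a continuous curve $\mathfrak{C}_2$ through $(\eta_1,0,v_\xi)$ of the form $(\eta(\varepsilon),\varepsilon\phi_\star+\varepsilon\theta_1(\varepsilon),v_\xi+\varepsilon\psi_\star+\varepsilon\theta_2(\varepsilon))$ for $0<\varepsilon<\varepsilon_0$, exhausting all nontrivial zeros of $F$ near $(\eta_1,0,0)$. Since $\Phi^0$ and $v_\xi(0)$ both lie in $\mathrm{int}(\Wqqp)$ and $\theta_j(0)=0$ with $\theta_j$ continuous, the traces $u(0)$ and $v(0)$ stay interior for $\varepsilon_0$ small; the strong positivity of the evolution operators in \eqref{darst1}, \eqref{darst2} then forces $(u,v)\in\Wqd\times\Wqd$ along the curve.
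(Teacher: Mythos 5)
Your proposal is correct and follows essentially the same route as the paper: the same translation $v=v_\xi+z$, the same identification of the one-dimensional kernel (first component pinned down by $\eta_1 H_{[-\alpha_2 v_\xi]}$, second component solved by inverting $1-\xi\hat H_{[2\beta_1 v_\xi]}$ using Lemma~\ref{A2} and \eqref{sp}), the same compactness argument for codimension one, and the same conclusion from \cite[Thm.1.7]{CrandallRabinowitz} together with \eqref{darst1}--\eqref{darst2} for positivity. The one place you deviate is the transversality step: you re-import the positivity argument (choose $\tau$, shift to an interior element, apply \cite[Thm.3.2]{AmannSIAMReview}) from the proof of Lemma~\ref{P22}, which is valid but longer than needed. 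The paper instead observes directly that the relevant element $\Phi^0/\eta_1$ would have to lie simultaneously in $\mathrm{ker}(1-\eta_1 H_{[-\alpha_2 v_\xi]})$ and in $\mathrm{rg}(1-\eta_1 H_{[-\alpha_2 v_\xi]})$, which is impossible because $\eta_1 r(H_{[-\alpha_2 v_\xi]})=1$ is a \emph{simple} eigenvalue of a compact operator, giving the spectral splitting $\Wqq=\mathrm{ker}\oplus\mathrm{rg}$. That shortcut is precisely what makes the $\xi>1$ case ``simpler'' than Lemma~\ref{P22}, as the paper remarks: since the $u$-equation decouples at $(0,v_\xi)$, the transversality obstruction lands squarely on the spectral problem for $H_{[-\alpha_2 v_\xi]}$ and the simplicity of the eigenvalue closes the argument in one line. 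Your version is also correct; you just did not take advantage of the simplification.
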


\begin{proof}
We proceed similar to the proof of Lemma~\ref{P22}. Writing solutions to \eqref{1coo}-\eqref{2coo} subject to \eqref{3}-\eqref{4} in the form $(\eta,u,v)=(\eta,u,v_\xi+w)\in \R\times\Wq\times \Wq$, we have
\begin{align}
&\partial_a u-\Delta_D u=-\alpha_1u^2+\alpha_2 u(v_\xi+w)\ ,& u(0)=\eta U\ ,\label{18a}\\
&\partial_aw-\Delta_Dw=-\beta_1w^2-2\beta_1 v_\xi w+\beta_2 u(v_\xi+w)\ ,& w(0)=\xi W\ ,\label{18b}
\end{align}
where we agree upon the notation (and similarly for other capital letters)
$$
U:=\int_0^{a_m}b_1(a) u(a)\,\rd a\ ,\quad W:=\int_0^{a_m}b_2(a) w(a)\,\rd a\ .
$$
Thus we are lead to examine the zeros of the function $G\in C^2((1,\infty)\times\Wq\times\Wq,\Wq\times\Wq)$ given by
$$
G(\eta,u,w):=\left(\begin{matrix} u-T(-\alpha_1u^2+\alpha_2 u(v_\xi+w)\, , \,\eta U)\\ w-T(-\beta_1 w^2 -2\beta_1 v_\xi w+\beta_2 u (v_\xi+w)\, ,\, \xi W)\end{matrix}\right)\ ,
$$
with $T$ as in the proof of Lemma~\ref{P22}. For the partial Frech\'et derivatives at $(\eta,u,w)=(\eta,0,0)$ we compute
$$
G_{(u,w)}(\eta,0,0)(\phi,\psi)=\left(\begin{matrix} \phi-T(\alpha_2 \phi v_\xi\, , \,\eta \Phi)\\ \psi-T( -2\beta_1 v_\xi \psi+\beta_2 \phi v_\xi\, ,\, \xi \Psi)\end{matrix}\right)
$$
and
$$
G_{\eta,(u,w)}(\eta,0,0)(\phi,\psi)=\left(\begin{matrix} -T(0, \Phi)\\ 0\end{matrix}\right) 
$$
for $(\phi,\psi)\in \Wq\times\Wq$. Arguments similar to the ones in the proof of Lemma~\ref{P22} yield
$$
\mathrm{ker}\big(G_{(u,w)}(\eta_1,0,0)\big)=\mathrm{span}\big\{(\phi_{\varstar},\psi_{\varstar})\big\}\ ,
$$ 
where (see \eqref{z} and Lemma~\ref{A2}) 
\bqn\label{phistern}
\phi_{\varstar}:=\Pi_{[-\alpha_2 v_\xi]}(\cdot,0)\,\Phi^0\in \Wq^+\ ,
\eqn
and
$$
\psi_{\varstar}:=\Pi_{[2\beta_1 v_\xi]}(\cdot,0)\Psi^0+ S\phi_{\varstar}\in \Wq\ ,\qquad \Psi^0:=\xi \big(1-\xi \hat{H}_{[2\beta_1 v_\xi]}\big)^{-1} \int_0^{a_m} b_2(a)(S\phi_{\varstar})(a)\,\rd a\  ,
$$
with
$$
(S\phi_{\varstar})(a):=
{\beta_2}\int_0^a \Pi_{[2\beta_1 v_\xi]}(a,\sigma)\,\big(v_\xi(\sigma) \phi_{\varstar}(\sigma)\big)\,\rd \sigma\ , \quad a\in J\ .
$$ 
Observing that the derivative of $G$ has the form $G_{(u,w)}(\eta_1,0,0)=1-\hat{T}$ with a compact operator $\hat{T}$ (see \eqref{compp}), we get that also the codimension of $\mathrm{rg}\big(G_{(u,w)}(\eta_1,0,0)\big)$ equals one.
Next assume that
$$
G_{\eta,(u,w)}(\eta_1,0,0)(\phi_{\varstar},\psi_{\varstar})\in\mathrm{rg}\big(G_{(u,w)}(\eta_1,0,0)\big)
$$
and let $u\in\Wq$ be with 
$$
u-T(\alpha_2 v_\xi u, \eta_1 U)=-T(0, \Phi_{\varstar})\ .
$$
Then
$$
\partial_a u-\Delta_D u =\alpha_2 v_\xi u\ ,\quad a\in J\ ,\qquad u(0)=\eta_1U-\Phi_{\varstar}\ .
$$ 
This readily implies 
$$
\Phi_{\varstar}=-\big(1-\eta_1 H_{[-\alpha_2 v_\xi]}\big) u(0)\in\mathrm{rg}\big(1-\eta_1 H_{[-\alpha_2 v_\xi]}\big)
$$
in contradiction to
$$
\Phi_{\varstar}=\frac{1}{\eta_1}\Phi^0\in\mathrm{ker}\big(1-\eta_1 H_{[-\alpha_2 v_\xi]}\big)
$$
by \eqref{z} and \eqref{phistern} since $\eta_1 r(H_{[-\alpha_2 v_\xi]})=1$ is a simple eigenvalue of the compact operator $\eta_1 H_{[-\alpha_2 v_\xi]}$. Consequently, 
$$
G_{\eta,(u,w)}(\eta_1,0,0)(\phi_{\varstar},\psi_{\varstar})\notin\mathrm{rg}\big(G_{(u,w)}(\eta_1,0,0)\big),
$$
and we may again apply \cite[Thm.1.7]{CrandallRabinowitz}. Thus, the nontrivial zeros of the function $G$ lie on the curve
$$
\big\{\big(\eta(\ve),\ve(\phi_{\varstar},\psi_{\varstar})+\ve(\theta_1(\ve),\theta_2(\ve))\big)\, ;\, \vert\ve\vert<\ve_0\big\}\ ,
$$
for some $\ve_0>0$ and functions $\eta\in C((-\ve_0,\ve_0),\R)$ and $\theta_j\in C((-\ve_0,\ve_0),\Wq)$ with $\eta(0)=\eta_1$, $\theta_j(0)=0$. 
Thus,
$$
\mathfrak{C}_2:=\big\{\big(\eta(\ve),\ve\phi_{\varstar}+\ve\theta_1(\ve),v_\xi+\ve\psi_{\varstar}+\ve\theta_2(\ve)\big)\, ;\, 0<\ve<\ve_0\big\}
$$
defines a continuous curve of solutions to \eqref{1coo}-\eqref{2coo}, \eqref{3}-\eqref{4} bifurcating from \mbox{ $(\eta_1,0,v_\xi)\in\mathfrak{B}_2$}. As $\phi_{\varstar}(0)=\Phi^0\in\mathrm{int}(\Wqqp)$ and $v_\xi(0)\in \mathrm{int}(\Wqqp)$, it follows from \eqref{darst1} and \eqref{darst2} that $$(u,v)\in\Wqd\times\Wqd\ ,\quad (\eta,u,v)\in \mathfrak{C}_2\ ,
$$ 
provided $\ve_0>0$ is sufficiently small. This completes the proof of the lemma.
\end{proof}

To prove the assertion on global bifurcation of Theorem~\ref{T1} we invoke Rabinowitz' global alternative~\cite{Rabinowitz} and the unilateral global theorem ~\cite{LopezGomezChapman} as in the proof of Theorem~\ref{T2}. Again, the present situation is considerably simpler than in the proof of Theorem~\ref{T2}. 

\begin{lem}\label{L2}
The local curve $\mathfrak{C}_2$ extends to an unbounded continuum of coexistence solutions $(\eta,u,v)$ in $\R^+\times\Wqd\times\Wqd$ to \eqref{1coo}-\eqref{2coo} subject to \eqref{3}-\eqref{4}.
\end{lem}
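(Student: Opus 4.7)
The plan is to reprise the Rabinowitz--L\'opez-G\'omez scheme from the proof of Lemma~\ref{L33}, with the reference branch $\mathfrak{B}_2$ replacing $\mathfrak{B}_1$. Writing solutions in the form $(\eta,u,v)=(\eta,u,v_\xi+w)$ and recalling \eqref{18a}--\eqref{18b}, I introduce the Green's operators
\bqnn
\tilde Z_1:=(\partial_a-\Delta_D-\alpha_2 v_\xi,\gamma_0)^{-1}\ ,\quad \tilde Z_2:=(\partial_a-\Delta_D+2\beta_1 v_\xi,\gamma_0)^{-1}
\eqnn
in $\ml(\Lq\times\Wqq,\Wq)$, and recast the system as
\bqnn
(u,w)-\tilde K(\eta)(u,w)+\tilde R(\eta,u,w)=0\ ,\qquad \tilde K(\eta)(u,w):=\left(\begin{matrix}\tilde Z_1(0,\eta U)\\ \tilde Z_2(\beta_2 v_\xi u,\xi W)\end{matrix}\right),
\eqnn
with $\tilde R$ collecting the quadratic terms. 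As in \eqref{K}--\eqref{RR}, $\tilde K(\eta)\in\mk(\Wq\times\Wq)$ depends continuously on $\eta$, and $\tilde R$ is compact with $\tilde R(\eta,u,w)=o(\|(u,w)\|_{\Wq\times\Wq})$ uniformly on compact $\eta$-intervals.

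Next I would run the spectral analysis of $\tilde K(\eta)$ in analogy with Lemma~\ref{L6}. If $\mu\ge 1$ is an eigenvalue with eigenvector $(\phi,\psi)$ and $\phi=0$, then $\mu\psi(0)=\xi\hat H_{[2\beta_1 v_\xi]}\psi(0)$ forces $\mu\le\xi\, r(\hat H_{[2\beta_1 v_\xi]})<\xi\, r(\hat H_{[\beta_1 v_\xi]})=1$ by Lemma~\ref{A2} and \eqref{sp}, contradicting $\mu\ge 1$. Hence $\phi(0)\neq 0$ is an eigenvector of $\eta H_{[-\alpha_2 v_\xi]}$ at eigenvalue $\mu$, yielding $\mu\le\eta\, r(H_{[-\alpha_2 v_\xi]})=\eta/\eta_1$; in particular $\mu\ge 1$ requires $\eta\ge\eta_1$. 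Consequently $\tilde\Sigma:=\{\eta:\mathrm{dim}\,\mathrm{ker}(1-\tilde K(\eta))\ne 0\}=\{\eta_1\}$ and $\mathrm{Ind}(0,\tilde K(\eta))=1$ on $(0,\eta_1)$; the simple-eigenvalue crossing at $\eta_1$, established via the perturbation argument of Lemma~\ref{L8} with $\Phi^0\in\mathrm{int}(\Wqqp)$ from \eqref{z} playing the role of $\Psi_0$, then yields $\mathrm{Ind}(0,\tilde K(\eta))=-1$ for $\eta$ slightly above $\eta_1$.

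With these ingredients the unilateral global bifurcation theorem \cite[Cor.6.3.2]{LopezGomezChapman}, \cite[Thm.6.4.3]{LopezGomezChapman} combined with Lemma~\ref{L1} produces a continuum $\mathfrak{C}'_2$ from $(\eta_1,0,v_\xi)$ satisfying the standard trichotomy. I would rule out each alternative except unboundedness for $\mathfrak{C}_2:=\mathfrak{C}'_2\cap(\R^+\times\Wqd\times\Wqd)$ following Lemma~\ref{L33}. The $\mathrm{rg}(1-\tilde K(\eta_1))$-return alternative is excluded verbatim. If $\mathfrak{C}'_2$ leaves $\Wqd\times\Wqd$ at a point $(\eta,u,v)\neq(\eta_1,0,v_\xi)$, then either $u=v=0$ (ruled out as in Lemma~\ref{L33}), or $u=0,\,v\neq 0$ which forces $v=v_\xi$ by Theorem~\ref{A1}, with the normalization argument of Lemma~\ref{L33} yielding a fixed point of $\tilde K(\eta)$ with non-trivial non-negative $u$-component, hence $\eta=\eta_1$ by the strong positivity of $\eta H_{[-\alpha_2 v_\xi]}$ and the Krein-Rutman theorem (contradicting the choice of $(\eta,u,v)$); or $u\neq 0,\,v=0$ with $u=u_\eta,\,\eta>1$, in which case the analogous normalization combined with the Section~\ref{sec 2b2} operator $K(\eta)$ and Lemma~\ref{L6} would require $1=\xi r(\hat H_{[-\beta_2 u_\eta]})$, but $\xi r(\hat H_{[-\beta_2 u_\eta]})\ge\xi r(\hat H_{[0]})=\xi>1$ by Lemma~\ref{A2} and \eqref{1mio}, a contradiction. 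Therefore $\mathfrak{C}_2$ must be unbounded in $\R^+\times\Wqd\times\Wqd$.

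The main obstacle, as in Section~\ref{sec 2b2}, is ruling out that $\mathfrak{C}_2$ re-approaches the semi-trivial branches; the present case is genuinely simpler because $\xi>1$ itself supplies the strict inequality $\xi r(\hat H_{[-\beta_2 u_\eta]})>1$ which forbids any attachment to $\mathfrak{B}_1$, so no input beyond Lemma~\ref{L6} and Lemma~\ref{A2} is required.
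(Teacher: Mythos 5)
Your proposal follows essentially the same route as the paper: linearize around $\mathfrak{B}_2$ via $\tilde Z_1,\tilde Z_2$, establish the eigenvalue/index analysis for $\tilde K(\eta)$ analogous to Lemmas~\ref{L6}/\ref{L8}, invoke the L\'opez-G\'omez unilateral global alternative, and rule out the non-unbounded alternatives by showing the continuum cannot leave $\Wqd\times\Wqd$. Two small remarks: your spectral computation only shows $\ker(1-\tilde K(\eta))=\{0\}$ for $\eta<\eta_1$, so the correct conclusion is that $\tilde\Sigma$ is discrete with least element $\eta_1$ (not $\tilde\Sigma=\{\eta_1\}$, since $1/\eta$ could hit a smaller eigenvalue of $H_{[-\alpha_2 v_\xi]}$ for $\eta>\eta_1$), and the paper collapses your three cone-leaving cases into one by first noting $v_j\ge v_\xi$ from the comparison principle of Lemma~\ref{A3}, which is slightly cleaner than your direct case analysis but equivalent.
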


\begin{proof}
Introducing the operators
\begin{align*}
&\tilde{Z}_1:=\big(\partial_a-\Delta_D-\alpha_2 v_\xi,\gamma_0)^{-1}\in \ml(\Lq\times\Wqq, \Wq)\ ,\\
&\tilde{Z}_2:=\big(\partial_a-\Delta_D+2\beta_1 v_\xi,\gamma_0)^{-1}\in \ml(\Lq\times\Wqq, \Wq)\ ,
\end{align*}
we may rewrite \eqref{18a}-\eqref{18b} equivalently as
\bqn\label{19}
(u,w)-\tilde{K}(\eta)(u,w)+\tilde{R}(u,w)=0\ 
\eqn
by setting
$$
\tilde{K}(\eta)(u,w):=\left(\begin{matrix} \tilde{Z}_1(0,\eta U)\\ \tilde{Z}_2(\beta_2 u v_\xi,\xi W)\end{matrix}\right)\ ,\qquad \tilde{R}(u,w):=-\left(\begin{matrix} \tilde{Z}_1(\alpha_1 u^2+\alpha_2uw,0)\\ \tilde{Z}_2(-\beta_1w^2+\beta_2uw,0)\end{matrix}\right)\ 
$$
for $(u,w)\in \Wq\times\Wq$. It is now easy to check on the basis of the previous section that the analogues of  \eqref{K}, \eqref{R}, \eqref{RR}, and accordingly Lemma~\ref{L6}, Corollary~\ref{C22}, Lemma~\ref{L8}, and Corollary~\ref{C3} hold for $\tilde{K}$ and $\tilde{R}$ when replacing $\eta_0$ by $\eta_1$. Consequently, we may apply the results \cite[Cor.6.3.2,Lem.6.4.1,Thm.6.4.3]{LopezGomezChapman} on unilateral global bifurcation to \eqref{19} and thus derive the existence of a continuum $\mathfrak{C}'_2$ of solutions $(\eta,u,v)$ to \eqref{1coo}-\eqref{2coo} subject to \eqref{3}-\eqref{4} in $\R\times\Wq\times\Wq$ emanating from $(\eta_1,0,v_\xi)$ and
satisfying the alternatives
\begin{itemize}
\item[(i)] $\mathfrak{C}'_2$ is unbounded in $\R\times\Wq\times\Wq$, or
\item[(ii)] $\mathfrak{C}'_2$ contains a point $(\eta,0,v_\xi)$ with $\eta\in\{\zeta\in\R ; \mathrm{dim}(\mathrm{ker}(1-\tilde{K}(\zeta)))\ge 1\}$ and $\eta\not=\eta_1$, or
\item[(iii)] $\mathfrak{C}'_2$ contains a point $(\eta,u,v_\xi+w)$ with $(u,w)\in\mathrm{rg}(1-\tilde{K}(\eta_1))$ with $(u,w)\not= (0,0)$.
\end{itemize}
By Lemma~\ref{L1}, $\mathfrak{C}'_2$ coincides with $\mathfrak{C}_2$ near $(\eta_1,0,v_\xi)$ suggesting to abuse notation by putting
$$
\mathfrak{C}_2:=\mathfrak{C}'_2\cap (\R^+\times\Wqd\times\Wqd)\not=\emptyset\ .
$$
We then claim that this so defined continuum
$\mathfrak{C}_2$ is unbounded in $\R^+\times\Wqd\times\Wqd$.
Indeed, suppose $\mathfrak{C}_2'$ leaves $\R^+\times\Wqd\times\Wqd$ at some point $(\eta,u,v)\in \mathfrak{C}'_2$ different from $(\eta_1,0,v_\xi)$ and let
$(\eta_j,u_j,v_j)\in \mathfrak{C}_2$ such that
$$
(\eta_j,u_j,v_j)\rightarrow (\eta,u,v)\quad\text{in}\quad \R\times \Wq\times\Wq\ .
$$
Clearly, $u= 0$ or $v= 0$. Observing that
$$
\partial_a v_j-\Delta_D v_j=-\beta_1 v_j^2+\beta_2 u_j v_j\ge-\beta_1 v_j^2\ ,\,\quad  v_j(0)=\xi V_j\ ,
$$
whence $v_j\ge v_\xi$ by Lemma~\ref{A3}, we deduce $v\ge v_\xi$ and so $u= 0$ since $(\eta,u,v)\in \mathfrak{C}'_2\setminus\mathfrak{C}_2$. 
Therefore, $(\eta,u,v)=(\eta,0,v_\xi)$ by the uniqueness statement of Theorem~\ref{A1}. A similar, but simpler argument as in Lemma~\ref{L33} (see also \cite[Lem.6.5.3]{LopezGomezChapman} or \cite[Lem.4.5]{WalkerJRAM}) then implies that $1/\eta$ is an eigenvalue of $H_{[-\alpha_2 v_\xi]}$ with a positive eigenvector, that is, $\eta=\eta_1$ by Lemma~\ref{A2} and \eqref{17} yielding the contradiction that $(\eta,u,v)$ coincides with $(\eta_1,0,v_\xi)$.
Consequently, $\mathfrak{C}_2=\mathfrak{C}'_2$ does not leave $\R^+\times\Wqd\times\Wqd$ except at $(\eta_1,0,v_\xi)$, and we conclude that alternative (ii) above is impossible. An argument similar to the proof of Lemma~\ref{L33} shows that alternative (iii) can also be ruled out.  Therefore, the only remaining possibility is that $\mathfrak{C}'_2=\mathfrak{C}_2$ is unbounded in $\R^+\times\Wqd\times\Wqd$.
\end{proof}

It remains to prove that there is no other bifurcation point on the semi-trivial branches.

\begin{cor}\label{C133}
There is no other bifurcation point on $\mathfrak{B}_2$ or $\mathfrak{B}_1$ to positive coexistence solutions than $(\eta_1,0,v_\xi)\in \mathfrak{B}_2$.
\end{cor}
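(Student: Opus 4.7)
The plan is to mirror the uniqueness-of-bifurcation arguments already carried out in Lemma~\ref{L33} and Corollary~\ref{C13}, running them once on each semi-trivial branch and exploiting that $\xi>1$. In both cases I will suppose that a bifurcation point exists, approximate it by positive coexistence solutions, subtract the relevant semi-trivial component, normalize the difference to unit $\Wq\times\Wq$-norm, and pass to the limit. Compactness of the linearization together with \eqref{102} will produce a nonzero fixed point of the appropriate compact map ($\tilde K$ or $K$) in the positive cone; the Krein-Rutman theorem via Lemma~\ref{A2} and the analogue of Lemma~\ref{L6} will then force a spectral identity on $\eta$ that either singles out $\eta_1$ or rules the point out altogether. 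The main technical step, exactly as in Lemma~\ref{L33}, is certifying that the rescaled limit lies in the positive cone and is nonzero; this is where the cooperative sign structure enters through the comparison principle of Lemma~\ref{A3}.

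For $\mathfrak{B}_2$, let $(\eta_\star,0,v_\xi)$ be a bifurcation point approximated by $(\eta_j,u_j,v_j)\in\R^+\times\Wqd\times\Wqd$. Writing $w_j:=v_j-v_\xi$, cooperativity of the $v$-equation, $-\beta_1 v_j^2+\beta_2 u_j v_j\ge -\beta_1 v_j^2$, gives $v_j\ge v_\xi$ and hence $w_j\ge 0$ by Lemma~\ref{A3}. The pair $(u_j,w_j)\in\Wq^+\times\Wq^+$ satisfies $(u_j,w_j)=\tilde K(\eta_j)(u_j,w_j)-\tilde R(u_j,w_j)$; dividing by $\|(u_j,w_j)\|_{\Wq\times\Wq}$, applying the mean-value identity used in Lemma~\ref{L33}, and invoking the $\tilde K$/$\tilde R$ analogues of \eqref{K}, \eqref{R}, \eqref{RR} noted in Lemma~\ref{L2}, I extract a nonzero $m_\star\in\Wq^+\times\Wq^+$ with $m_\star=\tilde K(\eta_\star)m_\star$. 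The analogue of Lemma~\ref{L6} for $\tilde K$ combined with the Krein-Rutman theorem then forces $\eta_\star\, r(H_{[-\alpha_2 v_\xi]})=1$, so $\eta_\star=\eta_1$ by \eqref{17}.

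For $\mathfrak{B}_1$, suppose $(\eta_\star,u_{\eta_\star},0)$ with $\eta_\star>1$ is a bifurcation point approached by $(\eta_j,u_j,v_j)$. Setting $\phi_j:=u_j-u_{\eta_j}$, cooperativity of the $u$-equation and Lemma~\ref{A3} give $u_j\ge u_{\eta_j}$ so $\phi_j\ge 0$, and $(\phi_j,v_j)\to(0,0)$ in $\Wq\times\Wq$ by \eqref{100}. The same normalization-and-limit procedure in the $K(\eta)$-framework of Section~\ref{sec 2b2} produces a nonzero $m_\star=(\phi_\star,v_\star)\in\Wq^+\times\Wq^+$ with $m_\star=K(\eta_\star)m_\star$. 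If $v_\star\ne 0$, Lemma~\ref{L6} and Krein-Rutman yield $\xi=1/r(\hat H_{[-\beta_2 u_{\eta_\star}]})$; since $u_{\eta_\star}>0$, Lemma~\ref{A2} together with \eqref{1mio} gives $r(\hat H_{[-\beta_2 u_{\eta_\star}]})>1$, forcing $\xi<1$ and contradicting the standing assumption $\xi>1$. If instead $v_\star=0$, the argument in Lemma~\ref{L6} forces $\eta_\star\, r(H_{[2\alpha_1 u_{\eta_\star}]})=1$, contradicting $\eta_\star\, r(H_{[\alpha_1 u_{\eta_\star}]})=1$ from \eqref{sp} via strict monotonicity of the spectral radius in the potential (Lemma~\ref{A2}). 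Hence no bifurcation point exists on $\mathfrak{B}_1$, and the corollary follows.
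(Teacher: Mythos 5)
Your proposal is correct and runs the same argument as the paper's own terse proof, which simply cites Corollary~\ref{C13} and Lemma~\ref{L33} for the compact-operator-and-Krein-Rutman machinery. You have merely spelled out what the paper compresses; in particular your explicit split into the cases $v_\star\ne 0$ and $v_\star=0$ on the branch $\mathfrak{B}_1$ reproduces the case distinction that is already built into Lemma~\ref{L6}, and both routes reach the same spectral identities $1=\eta\, r(H_{[-\alpha_2 v_\xi]})$ on $\mathfrak{B}_2$ and $1=\xi\, r(\hat{H}_{[-\beta_2 u_\eta]})$ on $\mathfrak{B}_1$, the latter being incompatible with $\xi>1$ via \eqref{1mio} and Lemma~\ref{A2}.
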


\begin{proof}
If $(\eta,0,v_\xi)\in \mathfrak{B}_2$ is a bifurcation point to positive coexistence solutions, then $1=\eta r(H_{[-\alpha_2 v_\xi]})$ as in the proof of Corollary~\ref{C13} and Lemma~\ref{L33} by using \eqref{19}, whence $\eta=\eta_1$.

Suppose that there is a bifurcation point $(\eta,u_{\eta},0)$ on $\mathfrak{B}_1$ to positive coexistence solutions. Then we deduce $1=\xi r(\hat{H}_{[-\beta_2 u_\eta]})$ as in the proof of Corollary~\ref{C13} and Lemma~\ref{L33}. However, this is not possible since $\xi>1$ and $1=r(\hat{H}_{[0]})<r(\hat{H}_{[-\beta_2 u_\eta]})$ by \eqref{1mio} and Lemma~\ref{A2}.
\end{proof}

This completes the proof of Theorem~\ref{T1}.
The proof of the following characterization of $\mathfrak{C}_2$ is the same as for Corollary~\ref{L3}:

\begin{cor}\label{L3a}
The continuum $\mathfrak{C}_2$ is unbounded with respect to both the parameter $\eta$ and the $u$-component in $\Wq$, or with respect to the $v$-component in $\Wq$. If \eqref{B} holds
for some $s>0$, then $\mathfrak{C}_2$ is unbounded with respect to the $u$-component in $\Wq$.
\end{cor}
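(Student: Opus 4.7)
The plan is to adapt the proof of Corollary~\ref{L3} essentially verbatim, since equations \eqref{1coo}-\eqref{2coo} are the same in both settings and the only change is that the continuum $\mathfrak{C}_2$ bifurcates from $\mathfrak{B}_2$ rather than from $\mathfrak{B}_1$; all three key tools (Lemma~\ref{A3}, the parabolic maximum principle, and Lemma~\ref{L00}) are stated for arbitrary solutions to \eqref{1}-\eqref{4} and transfer without modification.

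First I would establish the basic comparison. For any $(\eta,u,v)\in\mathfrak{C}_2$ with $\eta>1$, the inequality
\[
\partial_a u-\Delta_D u=-\alpha_1 u^2+\alpha_2\, uv\ge -\alpha_1 u^2\ ,\quad u(0)=\eta U\ ,
\]
together with Lemma~\ref{A3} and Theorem~\ref{A1} yields $u\ge u_\eta$ on $J\times\bar{\Om}$. Since $\|u_\eta(0)\|_\infty\to\infty$ as $\eta\to\infty$ by Theorem~\ref{A1}, the embedding $\Wq\hookrightarrow C(J,C(\bar{\Om}))$ forces that unboundedness of $\mathfrak{C}_2$ with respect to $\eta$ entails unboundedness with respect to the $u$-component in $\Wq$. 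Combined with Lemma~\ref{L2}, which guarantees that $\mathfrak{C}_2$ is unbounded in $\R^+\times\Wqd\times\Wqd$, this proves the first assertion exactly as in Corollary~\ref{L3}(i).

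For the refined statement under \eqref{B}, I would argue by contradiction and assume that $\|u(a)\|_\infty\le M$ on $J$ for every $(\eta,u,v)\in\mathfrak{C}_2$, with $M>s/\beta_2$. From \eqref{2coo} one obtains $\partial_a v-\Delta_D v\le -\beta_1 v^2+\beta_2 M v$, so the parabolic maximum principle delivers the same explicit supersolution
\[
f(a)=m\,\|v(0)\|_\infty\,\bigl(\beta_1\|v(0)\|_\infty(1-e^{-ma})+me^{-ma}\bigr)^{-1}\ ,\qquad m:=\beta_2 M>s\ ,
\]
as in the proof of Corollary~\ref{L3}(ii). The age-boundary condition \eqref{4} and hypothesis \eqref{B} then yield a uniform bound on $\|v(0)\|_\infty$, and via the explicit form of $f$ also on $\|v(a)\|_\infty$ for every $a\in J$. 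Lemma~\ref{L00} finally produces a uniform $\Wq$-bound on $v$, contradicting the first assertion (which, by the comparison $u\ge u_\eta$ and the hypothesis $\|u\|_\infty\le M$, already rules out unboundedness in $\eta$). Hence $\mathfrak{C}_2$ must be unbounded with respect to the $u$-component in $\Wq$.

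The main point to double-check, and the only place where any bookkeeping is required, is the closing chain of implications: boundedness of $u$ in sup norm forces boundedness of $\eta$ via $u\ge u_\eta$; combined boundedness of $u$, $v$ and $\eta$ produces a $\Wq$-bound on the whole continuum through Lemma~\ref{L00}; and this finally contradicts the global unboundedness supplied by Lemma~\ref{L2}. No new analytical ingredient beyond those already used for $\mathfrak{C}_1$ is needed.
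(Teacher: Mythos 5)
Your proposal is correct and follows precisely the paper's intent: the paper dispenses with the proof by stating it is identical to that of Corollary~\ref{L3}, and you have carried out exactly that adaptation, observing correctly that the comparison $u\ge u_\eta$ via Lemma~\ref{A3}, the supersolution argument under \eqref{B}, and Lemma~\ref{L00} all transfer verbatim since the underlying equations \eqref{1coo}--\eqref{2coo} are unchanged. The closing chain you flag (boundedness of $\|u\|_\infty$ forces $\eta$ bounded, then $v$ bounded via \eqref{B}, then a $\Wq$-bound from Lemma~\ref{L00} contradicting Lemma~\ref{L2}) is exactly the bookkeeping the paper relies on.
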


\section{Competing Systems: Proof of Theorem~\ref{T3}}\label{sec 2c}
We next consider \eqref{1comp}-\eqref{2comp} subject to \eqref{3}-\eqref{4}. The simplest case  is $\xi\le 1$ when no coexistence solutions exist:

\begin{lem}\label{L44}
If $\xi\le 1$, then there is no solution $(\eta,u,v)$ in $\R^+\times\Wq^+\times\Wqd$ to \eqref{1comp}-\eqref{2comp} subject to \eqref{3}-\eqref{4}. 
\end{lem}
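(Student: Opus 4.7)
The plan is to read off the conclusion directly from the spectral characterization \eqref{sp2}, specialized to the competition signs, together with the strict monotonicity of the Krein--Rutman spectral radius in the weight. No delicate bifurcation machinery is needed here.

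First I would observe that, since $v \in \Wqd$, the representation \eqref{darst2} (with the plus sign appropriate to \eqref{1comp}--\eqref{2comp}) reads
\[
v(a) = \Pi_{[\beta_1 v + \beta_2 u]}(a,0)\, v(0),\qquad a \in J,
\]
so $v(0) \neq 0$; otherwise $v \equiv 0$ in contradiction with $v \in \Wqd$. By the trace embedding \eqref{emb} and positivity of $v$ we get $v(0) \in \Wqqp \setminus \{0\}$. Hence the competing counterpart of \eqref{sp2} applies and yields
\[
\xi\, r\bigl(\hat{H}_{[\beta_1 v + \beta_2 u]}\bigr) = 1.
\]

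Next I would set $h := \beta_1 v + \beta_2 u$. Since $u \in \Wq^+$, $v \in \Wq^+$, and $v \not\equiv 0$, we have $h \geq 0$ and $h \not\equiv 0$ on $J \times \Om$. Lemma~\ref{A2} from the appendix gives strict monotonicity of the Krein--Rutman spectral radius in the weight, so combined with the normalization \eqref{1mio},
\[
r\bigl(\hat{H}_{[h]}\bigr) < r\bigl(\hat{H}_{[0]}\bigr) = 1.
\]
Substituting this into the identity $\xi\, r(\hat{H}_{[h]}) = 1$ produces $\xi = 1/r(\hat{H}_{[h]}) > 1$, which contradicts the assumption $\xi \leq 1$ and proves the lemma.

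There is essentially no obstacle here: everything reduces to invoking \eqref{sp2}, noting that $v(0)$ must be a nonzero positive trace, and applying the strict monotonicity of $r(\hat{H}_{[\cdot]})$ from Lemma~\ref{A2}. The only point that deserves a brief mention in the write-up is the justification that $v(0) \neq 0$ (via \eqref{darst2}), since otherwise \eqref{sp2} could not be invoked at all; this is the same observation already used implicitly throughout Section~\ref{sec 2b2} and Section~\ref{sec 2b1}.
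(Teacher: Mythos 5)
Your argument is correct, but it takes a genuinely different route from the paper. You deduce $v(0)\in\Wqqp\setminus\{0\}$ from \eqref{darst2}, invoke the spectral characterization \eqref{sp2} to get $\xi\,r(\hat{H}_{[\beta_1 v+\beta_2 u]})=1$, and then use the strict monotonicity of the Krein--Rutman spectral radius from Lemma~\ref{A2} together with the normalization \eqref{1mio} to force $\xi>1$. The paper instead proves the lemma bare-handed: it multiplies the differential inequality $\partial_a v-\Delta_D v\le -\beta_1 v^2$ by the principal eigenfunction $\varphi_1$, integrates over $\Om$ to obtain $z'\le-\lambda_1 z$ for $z(a)=\int_\Om\varphi_1 v(a)\,\rd x$, and then combines the resulting estimate on $z(0)$ with the normalization \eqref{6} to get $\xi>1$ directly. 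The two arguments are morally equivalent --- the paper's direct computation is exactly the elementary core of the eigenvalue comparison you invoke --- but yours is a cleaner reuse of the abstract machinery already set up in Section~\ref{sec 2}, whereas the paper's is self-contained and avoids the (here satisfied, via \eqref{embb}) Hölder regularity hypothesis on the weight that Lemma~\ref{A2} requires.
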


\begin{proof} Let $(\eta,u,v)\in\R^+\times\Wq^+\times\Wqd$ solve \eqref{1comp}-\eqref{2comp} subject to \eqref{3}-\eqref{4}.
Since $u\ge 0$, we have
\bqn\label{pp}
\partial_a v-\Delta_D v\le -\beta_1 v^2\ \text{on}\ J\times\Om\ ,\quad v(0)=\xi V\ ,
\eqn
and thus $z'(a)\le-\lambda_1 z(a)$ for $ a\in J$, where
$$
z(a):=\int_\Om \varphi_1 v(a)\,\rd x\ ,\quad a\in J\ ,
$$
and $\varphi_1$ is a positive eigenfunction for the principal eigenvalue $\lambda_1>0$ of $-\Delta_D$.
Therefore,
$$
z(0)=\xi\int_0^{a_m} b_2(a)\int_\Om \varphi_1 v(a)\,\rd a\rd x \le\xi \int_0^{a_m} b_2(a) e^{-\lambda_1 a}\,\rd a\, z(0)\ .
$$
Actually, this inequality is strict and $z(0)>0$ due to $v\in\Wqd$ and \eqref{pp}. So $\xi>1$ by \eqref{6}.
\end{proof}

In the sequel, let $\xi>1$ be arbitrarily fixed. The remainder of the proof of Theorem~\ref{T3} is then very similar to the one of Theorem~\ref{T1}, and we give merely a brief sketch of the proof mainly pointing out the differences. Due to \eqref{6} and Lemma~\ref{A2} we have
\bqn\label{xi2}
\eta_2:=\eta_2(\xi):=\frac{1}{r(H_{[\alpha_2 v_\xi]})}\in (1,\infty)\ .
\eqn
We linearize around $(\eta_2,0,v_\xi)\in\mathfrak{B}_2$ by
writing solutions to \eqref{1comp}-\eqref{2comp} subject to \eqref{3}-\eqref{4} in the form $(\eta,u,v)=(\eta,u,v_\xi-w)\in \R\times\Wq\times \Wq$ with 
\bqn\label{19s}
(u,w)-\hat{K}(\eta)(u,w)+\hat{R}(u,w)=0\ .
\eqn
Hereby,
$$
\hat{K}(\eta)(u,w):=\left(\begin{matrix} \tilde{Z}_1(0,\eta U)\\ \hat{Z}_2(\beta_2 u v_\xi,\xi W)\end{matrix}\right)\ ,\qquad \hat{R}(u,w):=-\left(\begin{matrix} \hat{Z}_1(-\alpha_1 u^2+\alpha_2uw,0)\\ \hat{Z}_2(\beta_1w^2-\beta_2uw,0)\end{matrix}\right)\ 
$$
for $(u,w)\in \Wq\times\Wq$ with
\begin{align*}
&\hat{Z}_1:=\big(\partial_a-\Delta_D+\alpha_2 v_\xi,\gamma_0)^{-1}\in \ml(\Lq\times\Wqq, \Wq)\ ,\\
&\hat{Z}_2:=\big(\partial_a-\Delta_D+2\beta_1 v_\xi,\gamma_0)^{-1}\in \ml(\Lq\times\Wqq, \Wq)\ .
\end{align*}
Exactly as in the previous sections we deduce with the aid of \cite{CrandallRabinowitz, LopezGomezChapman}: there is a continuum $\mathfrak{C}'_3$ of solutions $(\eta,u,v)$ to \eqref{1comp}-\eqref{2comp} subject to \eqref{3}-\eqref{4} in $\R\times\Wq\times\Wq$ emanating from $(\eta_2,0,v_\xi)\in\mathfrak{B}_2$ and satisfying the alternatives
\begin{itemize}
\item[(i)] $\mathfrak{C}'_3$ is unbounded in $\R\times\Wq\times\Wq$, or
\item[(ii)]  there is $\eta\in\{\zeta\in\R ; \mathrm{dim}(\mathrm{ker}(1-\hat{K}(\zeta)))\ge 1\}$ with $\eta\not=\eta_2$ and $(\eta,0,v_\xi)\in \mathfrak{C}'_3$, or
\item[(iii)] there is $(\eta,u,v_\xi-w)\in\mathfrak{C}'_3$ with $(u,w)\in\mathrm{rg}(1-\hat{K}(\eta_2))\setminus\{(0,0)\}$.
\end{itemize}
Close to $(\eta_2,0,v_\xi)$, the continuum $\mathfrak{C}'_3$ is a continuous curve in $\R^+\times\Wqd\times\Wqd$. We define
$$
\mathfrak{C}_3:=\mathfrak{C}'_3\cap (\R^+\times\Wqd\times\Wqd)\not=\emptyset\ ,
$$
and note that $\eta>1$ for $(\eta,u,v)\in\mathfrak{C}_3$ by reproducing the proof of Lemma~\ref{L44}.

\begin{lem}\label{L55}
The continuum $\mathfrak{C}_3$ is either unbounded in $\R^+\times\Wqd\times\Wqd$ or joins $\mathfrak{B}_2$ with $\mathfrak{B}_1$.
\end{lem}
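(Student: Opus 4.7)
\medskip
\noindent\textit{Proof plan.} The argument would follow the template of Lemma~\ref{L2}, with the essential new feature that in the competition setting the continuum is genuinely allowed to leave $\R^+\times\Wqd\times\Wqd$ across the branch $\mathfrak{B}_1$, so one must carefully identify which exit points can occur.

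Assume $\mathfrak{C}_3$ is bounded in $\R^+\times\Wqd\times\Wqd$; we shall show it joins $\mathfrak{B}_1$. Alternatives (ii) and (iii) of the unilateral global result applied to \eqref{19s} can be ruled out verbatim as in Lemma~\ref{L2}: any second singular value on $\mathfrak{B}_2$, or any nontrivial direction in $\mathrm{rg}(1-\hat{K}(\eta_2))$ contained in $\mathfrak{C}_3'$, would produce, after the standard normalization and passage to the limit, a positive eigenvector whose existence, via Krein--Rutman applied to $H_{[\alpha_2 v_\xi]}$ together with \eqref{xi2}, forces the spectral parameter back to $\eta_2$, a contradiction. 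Hence alternative~(i) must hold for $\mathfrak{C}_3'$; combined with the assumed boundedness of $\mathfrak{C}_3$ this yields $\mathfrak{C}_3\subsetneq\mathfrak{C}_3'$, i.e.\ the continuum leaves $\R^+\times\Wqd\times\Wqd$ at some point $(\eta_*,u_*,v_*)\ne(\eta_2,0,v_\xi)$ realized as the $\R\times\Wq\times\Wq$-limit of a sequence $(\eta_j,u_j,v_j)\in\mathfrak{C}_3$; by nonnegativity $u_*,v_*\in\Wq^+$ and at least one of them vanishes.

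Three sub-cases then arise. If $u_*=0$ and $v_*\ne 0$, then $v_*=v_\xi$ by Theorem~\ref{A1} and $\xi>1$; rescaling $\tilde u_j:=u_j/\|u_j\|_{\Wq}$ and extracting a $\Wq$-subsequential limit in the equation $\partial_a\tilde u_j-\Delta_D\tilde u_j=-(\alpha_1 u_j+\alpha_2 v_j)\tilde u_j$ produces a positive eigenvector of $\eta_* H_{[\alpha_2 v_\xi]}$ with eigenvalue~$1$, so that Krein--Rutman and \eqref{xi2} force $\eta_*=\eta_2$, contrary to the choice of exit point. If $u_*=v_*=0$, an analogous rescaling of $v_j$ yields in the limit a positive eigenvector of $\xi\hat{H}_{[0]}$ with eigenvalue~$1$, whence $\xi=r(\hat{H}_{[0]})^{-1}=1$ by \eqref{1mio}, contradicting $\xi>1$. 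The only remaining case is $v_*=0$ and $u_*\ne 0$, in which Theorem~\ref{A1} gives $\eta_*>1$ and $u_*=u_{\eta_*}$, so $(\eta_*,u_{\eta_*},0)\in\mathfrak{B}_1$ and $\mathfrak{C}_3$ joins $\mathfrak{B}_2$ with $\mathfrak{B}_1$.

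The delicate step, and the main expected obstacle, is the normalization/limit argument in the first and third sub-cases: one must extract a $\Wq$-subsequential limit of the rescaled sequence, verify via the evolution-operator representation that its trace at $a=0$ is a nontrivial element of $\Wqqp$ so that Krein--Rutman genuinely applies, and pass to the limit in the full nonlinear system. All the required ingredients are already available from Sections~\ref{sec 2b2}--\ref{sec 2b1}, so the proof should reduce to a bookkeeping adaptation of the arguments in Lemmas~\ref{L33} and~\ref{L2} once the correct linearization of $\hat{K}$ at each candidate exit point is identified.
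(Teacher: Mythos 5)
Your sub-case analysis of the exit point is correct and matches the paper's, and your alternative argument for the degenerate case $(u_\ast,v_\ast)=(0,0)$ (rescaling $v_j$ to produce a positive eigenvector of $\hat{H}_{[0]}$ and then invoking \eqref{1mio}) is a legitimate, if slightly different, route to the same contradiction that the paper obtains via the simpler remark that solutions near $\mathfrak{B}_0$ lie on $\mathfrak{B}_1$. However, the first step of your proof contains a genuine gap.

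You claim that alternatives (ii) and (iii) for $\mathfrak{C}'_3$ ``can be ruled out verbatim as in Lemma~\ref{L2}.'' This is not so. In Lemma~\ref{L2} the exclusion of (ii) is a \emph{consequence} of the earlier, separately established fact that $\mathfrak{C}'_2$ never leaves $\R^+\times\Wqd\times\Wqd$ except at the bifurcation point; crucially, that earlier step used the cooperative sign to deduce $v_j\ge v_\xi$ along any sequence in $\mathfrak{C}_2$, which forbids $v\to 0$ and hence any exit through $\mathfrak{B}_1$. In the competition setting this inequality is reversed ($v_j\le v_\xi$ by \eqref{ui}), so exit through $\mathfrak{B}_1$ is no longer excluded; indeed, this is precisely what happens in alternative~(a) of Theorem~\ref{T3}. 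Consequently you cannot establish \emph{a priori} that every point of $\mathfrak{C}'_3$ off $\mathfrak{B}_2$ lies in $\R^+\times\Wqd\times\Wqd$, and the normalization argument you invoke to rule out (ii) and (iii) requires exactly that positivity of the approximating sequence: without it there is no positive limit eigenvector to which Krein--Rutman and \eqref{xi2} can be applied, and for alternative (iii) the conclusion $v(0)\in\Wqqp\setminus\{0\}$ fails. The correct structure, which the paper uses, is a case split: either $\mathfrak{C}'_3$ stays in $\R^+\times\Wqd\times\Wqd$ away from the bifurcation point — in which case (ii) and (iii) are ruled out by your argument (now legitimate, since all approximating sequences are positive) and (i) gives unboundedness — or it leaves, and your three-sub-case analysis applies and shows the exit point is $(\eta_\ast,u_{\eta_\ast},0)\in\mathfrak{B}_1$. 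With that re-ordering your argument becomes correct and essentially identical to the paper's.
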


\begin{proof} 
Assume that $\mathfrak{C}_3=\mathfrak{C}'_3$, that is, $\mathfrak{C}'_3\subset\R^+\times\Wqd\times\Wqd$. Then alternative (ii) above is impossible while alternative (iii) can be ruled out with the same argument as in the proof of Lemma~\ref{L33}. Thus, if $\mathfrak{C}_3=\mathfrak{C}'_3$, then it is unbounded in $\R^+\times\Wqd\times\Wqd$. Suppose that $\mathfrak{C}_3$ is a proper subset of $\mathfrak{C}'_3$. Let $((\eta_j,u_j,v_j))_{j\in\N}$ be a sequence in $\mathfrak{C}_3$ converging toward $(\eta,u,v)\in \mathfrak{C}'_3\setminus \mathfrak{C}_3$ with $(\eta,u,v)\not=(\eta_2,0,v_\xi)$, so $u= 0$ or $v= 0$. As the only solutions close to $\mathfrak{B}_0$ lie on the curve $\mathfrak{B}_1$, the case $(u,v)=(0,0)$ is impossible. If $u= 0$ but $v\not= 0$, then $(\eta,u,v)=(\eta,0,v)$ and so, since $\xi>1$, $v=v_\xi$ by Theorem~\ref{A1}. A similar, but simpler argument as in Lemma~\ref{L33} (see also \cite[Lem.6.5.3]{LopezGomezChapman} or \cite[Lem.4.5]{WalkerJRAM}) then implies that $\eta=\eta_2$. This yields the contradiction $(\eta,u,v)=(\eta_2,0,v_\xi)$. Therefore, the only remaining possibility is that $v= 0$ but $u\not= 0$ from which  $(\eta,u,v)=(\eta,u_\eta,0)\in\mathfrak{B}_1$ according to \eqref{1comp}, \eqref{3}, and Theorem~\ref{A1}. This proves the claim.
\end{proof}

The next lemma implies, in particular, that if $\mathfrak{C}_3$ is unbounded in $\R^+\times\Wqd\times\Wqd$, then it is unbounded with respect to the parameter $\eta$:

\begin{lem}\label{L10}
Given $M>1$ there is $c(M)>0$ such that $\|u\|_{\Wq}+\|v\|_{\Wq}\le c(M)$ whenever $(\eta,u,v)\in\mathfrak{C}_3$ with $\eta\le M$.
\end{lem}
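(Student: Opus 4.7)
The plan is to exploit the sign of the competition coupling: since $u,v\ge 0$ and $\alpha_2,\beta_2>0$, both $u$ and $v$ are subsolutions to the decoupled logistic-type equations \eqref{Aee1}, \eqref{Aee2}. This should deliver pointwise bounds in terms of the semi-trivial solutions $u_\eta$ and $v_\xi$, which in turn are well-behaved in $\eta$ on compact sets. A final application of Lemma~\ref{L00} will transfer these $L_\infty$-bounds into the required $\Wq$-bounds.

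First, I would fix $(\eta,u,v)\in\mathfrak{C}_3$ with $\eta\le M$; recall that $\eta>1$ on $\mathfrak{C}_3$ by the argument reproducing Lemma~\ref{L44}. Dropping the nonpositive terms $-\alpha_2 vu$ and $-\beta_2 uv$ in \eqref{1comp}, \eqref{2comp} gives
\begin{align*}
\partial_a u-\Delta_D u&\le -\alpha_1 u^2,\qquad u(0,\cdot)=\eta\int_0^{a_m}b_1(a)u(a,\cdot)\,\rd a,\\
\partial_a v-\Delta_D v&\le -\beta_1 v^2,\qquad v(0,\cdot)=\xi\int_0^{a_m}b_2(a)v(a,\cdot)\,\rd a.
\end{align*}
The comparison principle of Lemma~\ref{A3}, combined with the uniqueness statement of Theorem~\ref{A1}, then forces
$$
0\le u(a,x)\le u_\eta(a,x),\qquad 0\le v(a,x)\le v_\xi(a,x),\qquad (a,x)\in J\times\bar\Om.
$$

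Next I would exploit the continuous dependence $\eta\mapsto u_\eta$ from $\R$ into $\Wq$ provided by \eqref{100} (setting $u_\eta=0$ for $\eta\le 1$). Together with the embedding $\Wq\hookrightarrow C(J,C(\bar\Om))$ from \eqref{emb}, this furnishes a constant $c_1(M)>0$ with $\|u_\eta(a)\|_\infty\le c_1(M)$ for all $a\in J$ and $\eta\in[1,M]$. Since $\xi>1$ is fixed, $\|v_\xi(a)\|_\infty$ is bounded by some constant $c_2$ independent of $\eta$. Hence
$$
\|u(a)\|_\infty+\|v(a)\|_\infty\le c_1(M)+c_2\ ,\qquad a\in J\ ,
$$
for every $(\eta,u,v)\in\mathfrak{C}_3$ with $\eta\le M$.

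Finally, Lemma~\ref{L00} applies directly to yield
$$
\|u\|_{\Wq}\le c(M)(\eta+1)\le c(M)(M+1),\qquad \|v\|_{\Wq}\le c(M)(\xi+1),
$$
and the claim follows. The only delicate point is the comparison step in the presence of the nonlocal initial conditions, but this is precisely the content of Lemma~\ref{A3} (already invoked in the same way in Corollary~\ref{L3}); no substantial obstacle remains.
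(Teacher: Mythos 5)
Your proof is correct and follows essentially the same strategy as the paper's: bound $u$ by $u_\eta$ and $v$ by $v_\xi$ via the comparison principle of Lemma~\ref{A3}, then invoke Lemma~\ref{L00}. The only cosmetic difference is how you bound $\|u_\eta\|_\infty$ for $\eta\in[1,M]$ — you argue via the continuity of $\eta\mapsto u_\eta$ and compactness of $[1,M]$, whereas the paper uses the explicit estimate $u_\eta(a)\le\kappa\eta^2$ from Theorem~\ref{A1}; both are valid.
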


\begin{proof}
Let $(\eta,u,v)\in\mathfrak{C}_3$ with $\eta\le M$. Recall $\xi, \eta>1$ and observe 
$$
\partial_a v-\Delta_D v=-\beta_1 v^2-\beta_2 u v\le-\beta_1 v^2\ ,\,\quad  v(0)=\xi V\ ,
$$
whence
\bqn\label{ui}
0\le v(a)\le v_\xi(a)\le \kappa \xi^2\ ,\quad a\in J\ ,
\eqn
by Lemma~\ref{A3} and Theorem~\ref{A1}. Similarly,
\bqn\label{uii}
0\le u(a)\le u_\eta(a)\le \kappa \eta^2\le \kappa M^2\ ,\quad a\in J\ .
\eqn 
Hence 
$$
\|u(a)\|_\infty+\|v(a)\|_\infty \le c(M)\ ,\quad a\in J\ .
$$
and we conclude with the help of Lemma~\ref{L00}.
\end{proof}

To show that $\mathfrak{C}_3$ joins $\mathfrak{B}_2$ with $\mathfrak{B}_1$ for certain values of $\xi$, we require the following auxiliary result. Recall that $\varphi_1$ is the positive eigenfunction of $-\Delta_D$ corresponding to the principal eigenvalue $\lambda_1>0$ with $\|\varphi_1\|_\infty=1$.

\begin{lem}\label{L66}
Set $\mu_1:=\lambda_1+\alpha_2\kappa \xi^2$ and $m_0:=e^{\alpha_2\kappa \xi^2 a_m}$. Given $\eta>m_0$ let $z_\eta(a):=f_\eta(a) \varphi_1$, $a\in J$, where
\bqnn
f_\eta(a):=\frac{\mu_1}{c_\eta\mu_1e^{\mu_1 a}-\alpha_1}\ ,\quad a\in J\ ,\qquad c_\eta:=\frac{\alpha_1}{\mu_1}\frac{\eta -e^{-\lambda_1 a_m}}{\eta-m_0}\ .
\eqnn
Then $z_\eta$ is increasing in $\eta\in (m_0,\infty)$ and $z_\eta\le u$ on $J\times\Om$ for any ($\eta,u,v)\in\mathfrak{C}_3$ with $\eta>m_0$.
\end{lem}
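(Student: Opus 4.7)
The strategy is threefold: (i) verify that $z_\eta$ is a positive subsolution of the $u$-equation \eqref{1comp}; (ii) establish the strict nonlocal inequality $\eta\int_0^{a_m} b_1 f_\eta\,\rd a>f_\eta(0)$, which is exactly what the peculiar form of $c_\eta$ is engineered to produce; (iii) compare $z_\eta$ and $u$ via a sliding argument in the $\varphi_1$-direction that exploits (i), (ii), and the nonlocal initial condition. Monotonicity in $\eta$ will fall out of the explicit formula.

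For (i), a direct differentiation shows that $f_\eta$ solves the Bernoulli ODE $f_\eta'+\mu_1 f_\eta+\alpha_1 f_\eta^2=0$. Combining $-\Delta_D\varphi_1=\lambda_1\varphi_1$, $\mu_1=\lambda_1+\alpha_2\kappa\xi^2$, $\|\varphi_1\|_\infty=1$ (hence $\varphi_1^2\le\varphi_1$), and $v\le\kappa\xi^2$ from \eqref{ui}, one obtains
\begin{equation*}
\partial_a z_\eta-\Delta_D z_\eta=(f_\eta'+\lambda_1 f_\eta)\varphi_1=-\alpha_1 f_\eta^2\varphi_1-\alpha_2\kappa\xi^2 z_\eta\le -\alpha_1 z_\eta^2-\alpha_2 v z_\eta.
\end{equation*}

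For (ii), the crucial identity $f_\eta(a_m)e^{\lambda_1 a_m}=f_\eta(0)/\eta$ follows by substituting $a=a_m$ into the explicit formula and simplifying with $e^{\mu_1 a_m}=m_0 e^{\lambda_1 a_m}$. Moreover $g(a):=f_\eta(a)e^{\lambda_1 a}$ is strictly decreasing on $[0,a_m]$ since $g'(a)=-f_\eta(\alpha_2\kappa\xi^2+\alpha_1 f_\eta)e^{\lambda_1 a}<0$. By the normalization \eqref{6}, $\rd\nu(a):=b_1(a)e^{-\lambda_1 a}\,\rd a$ is a probability measure on $[0,a_m]$; since $b_1$ is a measurable function positive in a neighborhood of $a_m$, $\nu$ is not concentrated at $\{a_m\}$, so
\begin{equation*}
\int_0^{a_m} b_1(a) f_\eta(a)\,\rd a=\int_0^{a_m} g\,\rd\nu>g(a_m)=f_\eta(0)/\eta,
\end{equation*}
giving the claim in (ii).

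For (iii), since $u(0)$ and $z_\eta(0)=f_\eta(0)\varphi_1$ both lie in $\mathrm{int}(\Wqqp)$, strong positivity of the parabolic evolution operator yields $u(a),z_\eta(a)\in\mathrm{int}(\Wqqp)$ for every $a\in J$, so the ratio $u/z_\eta$ extends to a continuous positive function on $J\times\bar\Omega$. Set $\tau^*:=\min_{J\times\bar\Omega}(u/z_\eta)>0$ and $w:=u-\tau^* z_\eta\ge 0$. Assume for contradiction $\tau^*<1$; subtracting the $u$-equation from the subsolution inequality in (i) and factoring gives
\begin{equation*}
\partial_a w-\Delta_D w+\bigl[\alpha_1(u+\tau^* z_\eta)+\alpha_2 v\bigr]w\ge\alpha_1\tau^*(1-\tau^*)z_\eta^2>0\ \text{on}\ (0,a_m)\times\Omega.
\end{equation*}
The parabolic strong maximum principle and Hopf's lemma then rule out the minimum of $u/z_\eta$ being attained for $a_0>0$. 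For $a_0=0$, applying the nonlocal initial condition at the minimizing $x_0$ (or, if $x_0\in\partial\Omega$, its consequence for the outward normal derivative) together with the strict inequality $w>0$ on $(0,a_m]\times\Omega$ and $b_1>0$ near $a_m$ forces $f_\eta(0)\ge\eta\int_0^{a_m} b_1 f_\eta\,\rd a$, contradicting (ii). Hence $\tau^*\ge 1$, i.e., $z_\eta\le u$. Finally, monotonicity is immediate from the equivalent formulation $1/f_\eta(a)=c_\eta e^{\mu_1 a}-\alpha_1/\mu_1$ together with the observation that $\eta\mapsto c_\eta=(\alpha_1/\mu_1)(\eta-e^{-\lambda_1 a_m})/(\eta-m_0)$ is strictly decreasing on $(m_0,\infty)$ (because $m_0\ge 1>e^{-\lambda_1 a_m}$). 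The main obstacle is step (ii): without the hidden identity $g(a_m)=f_\eta(0)/\eta$ built into the definition of $c_\eta$, the strict nonlocal inequality would fail and the sliding argument would collapse.
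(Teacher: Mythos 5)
Your proposal is correct, and in fact steps (i) and (ii) make explicit the two facts the paper's proof asserts tersely: the identity $f_\eta'+\mu_1 f_\eta=-\alpha_1 f_\eta^2$ (leading to the differential inequality obtained via $\varphi_1^2\le\varphi_1$ and $v\le\kappa\xi^2$ from \eqref{ui}), and the nonlocal inequality $z_\eta(0)\le\eta Z_\eta$, which the paper only claims is ``easily seen'' from $c_\eta>\alpha_1/\mu_1$ and \eqref{6}. Your observation that $c_\eta$ is engineered precisely so that $g(a):=f_\eta(a)e^{\lambda_1 a}$ satisfies $g(a_m)=f_\eta(0)/\eta$, with $g$ strictly decreasing, is a clean and illuminating way to obtain the nonlocal inequality, and I verified the algebra $\eta(c_\eta\mu_1-\alpha_1)e^{\lambda_1 a_m}=c_\eta\mu_1 e^{\mu_1 a_m}-\alpha_1$ does reproduce the stated $c_\eta$. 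The genuine divergence from the paper is step (iii): the paper simply observes that $z_\eta$ solves the $w$-equation of the comparison principle Lemma~\ref{A3a} with $R=\kappa\xi^2$ and $F=\alpha_1(f_\eta-z_\eta)z_\eta\ge 0$, and then invokes that lemma (together with \eqref{ui}) to conclude $u\ge z_\eta$ in one line. You instead re-derive a comparison from scratch via a sliding argument: minimize $u/z_\eta$ over $J\times\bar\Omega$, show the minimum quotient $\tau^*$ cannot be $<1$ by the parabolic strong maximum principle and Hopf lemma (ruling out $a_0>0$) and by feeding the strict nonlocal inequality back through condition \eqref{3} (ruling out $a_0=0$). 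This is self-contained and correct in outline, but it reproves a version of an already-available comparison lemma, and it quietly relies on nontrivial facts you only gesture at: that $u(a),z_\eta(a)\in\mathrm{int}(\Wqqp)$ for all $a$ so that $u/z_\eta$ extends continuously to $J\times\bar\Omega$ via boundary normal derivatives, and that the Hopf-type argument at a boundary minimizer combined with differentiation of the nonlocal condition in the normal direction closes the $a_0=0$, $x_0\in\partial\Omega$ case. The paper's route is therefore considerably shorter and avoids those technicalities; your route buys independence from Lemma~\ref{A3a} at the cost of longer bookkeeping. The monotonicity observation via $1/f_\eta=c_\eta e^{\mu_1 a}-\alpha_1/\mu_1$ and $c_\eta$ strictly decreasing on $(m_0,\infty)$ (using $m_0\ge 1>e^{-\lambda_1 a_m}$) is equivalent to the paper's $\partial_\eta f_\eta>0$.
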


\begin{proof}
Note that $f_\eta'+\mu_1 f_\eta=-\alpha_1 f_\eta^2$, whence
$$
\partial_a z_\eta -\Delta_D z_\eta =-\alpha_1 z_\eta^2-\alpha_2 \kappa \xi^2 z_\eta -F\ \text{on}\ J\times\Omega\ ,
$$
where $F:=\alpha_1 (f_\eta -z_\eta) z_\eta\ge 0$. Due to the definition of $c_\eta>\alpha_1/\mu_1$ and \eqref{6}, it is easily seen that $z_\eta (0)\le \eta Z_\eta$. The comparison principle stated in Lemma~\ref{A3a}, \eqref{ui}, and \eqref{1comp} then yield $u\ge z_\eta $ on $J\times\Om$ for any ($\eta,u,v)\in\mathfrak{C}_3$ with $\eta>m_0$. That $z_\eta$ is increasing in $\eta\in(m_0,\infty)$ follows from $\partial_\eta f_\eta (a)>0$ for~$a\in J$.
\end{proof}

We remark that \eqref{1mio}, Lemma~\ref{L66}, Lemma~\ref{A2}, and Theorem~\ref{A1} ensure that both maps $\eta\mapsto r(\hat{H}_{[\beta_2 z_\eta]})$ and $\eta\mapsto r(\hat{H}_{[\beta_2 u_\eta]})$ belong to $
C\big( (m_0,\infty),(0,1)\big)$ and are strictly decreasing with $r(\hat{H}_{[\beta_2 u_\eta]})\le r(\hat{H}_{[\beta_2 z_\eta]})$ for $\eta>m_0$ and $\lim_{\eta\rightarrow 0} r(\hat{H}_{[\beta_2 u_\eta]})=1$, hence
$$
\big(\lim_{\eta\rightarrow \infty} r(\hat{H}_{[\beta_2 u_\eta]})\big)^{-1}\ge \big(\lim_{\eta\rightarrow \infty} r(\hat{H}_{[\beta_2 z_\eta]})\big)^{-1}=:N\in (1,\infty]
$$
is well-defined.

\begin{rem}\label{R42}
As Lemma~\ref{L66} ensures $\|z_\eta\|_\infty\rightarrow \infty$ for $\eta\rightarrow\infty$, we conjecture $N=\infty$.
\end{rem}

Note that for any $\xi\in (1,N)$ there is a unique $\eta_3:=\eta_3(\xi)>1$ such that
\bqn\label{xi3}
\xi\, r(\hat{H}_{[\beta_2 u_{\eta_3}]}) =1\ .
\eqn
In this case, the continuum $\mathfrak{C}_3$ connects $\mathfrak{B}_2$ with $\mathfrak{B}_1$ and the value of $\eta_3$ determines the point where $\mathfrak{C}_3$ joins up with $\mathfrak{B}_1$:

\begin{cor}\label{C77}
If $\xi\in (1,N)$, then $\mathfrak{C}_3$ joins up with $\mathfrak{B}_1$ at the point $(\eta_3,u_{\eta_3},0)$.
\end{cor}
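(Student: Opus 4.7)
The plan is to split into two stages: first, rule out alternative (b) of Theorem~\ref{T3} when $\xi<N$ (so that only alternative (a) remains), and second, identify the joining point on $\mathfrak{B}_1$ by a Krein--Rutman spectral identity.

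For the first stage, I would suppose toward contradiction that $\mathfrak{C}_3$ is unbounded with respect to $\eta$, and select $(\eta_j,u_j,v_j)\in\mathfrak{C}_3$ with $\eta_j\to\infty$. Once $\eta_j>m_0$, Lemma~\ref{L66} yields the subsolution bound $u_j\ge z_{\eta_j}$. Since $v_j(0)\in\Wqqp\setminus\{0\}$, the spectral identity \eqref{sp2} (with $\mp$ read as $+$ for the competing signs) combined with the monotonicity of the spectral radius from Lemma~\ref{A2} gives
\bqnn
1=\xi\,r(\hat{H}_{[\beta_1 v_j+\beta_2 u_j]})\le \xi\,r(\hat{H}_{[\beta_2 z_{\eta_j}]}).
\eqnn
Letting $j\to\infty$ and invoking the definition of $N$ forces $\xi\ge N$, contradicting $\xi<N$. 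Combined with Lemma~\ref{L10}, this excludes alternative (b); by Lemma~\ref{L55}, $\mathfrak{C}_3$ then joins $\mathfrak{B}_2$ with $\mathfrak{B}_1$ at some point $(\hat\eta,u_{\hat\eta},0)$, and necessarily $\hat\eta>1$ because $\mathfrak{C}_3$ cannot approach $\mathfrak{B}_0$.

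For the second stage, I would pick $(\eta_j,u_j,v_j)\in\mathfrak{C}_3$ converging to $(\hat\eta,u_{\hat\eta},0)$, use the maximum principle applied to \eqref{1comp} to obtain $u_j\le u_{\eta_j}$, and set $w_j:=u_{\eta_j}-u_j\in\Wq^+$, which tends to $0$ in $\Wq$. Normalizing $m_j:=(w_j,v_j)/\|(w_j,v_j)\|_{\Wq^2}\in\Wq^+\times\Wq^+$, the mean-value-plus-compactness scheme from the proof of Lemma~\ref{L33}, applied to the fixed-point reformulation of \eqref{1comp}--\eqref{2comp} linearized around $(\hat\eta,u_{\hat\eta},0)\in\mathfrak{B}_1$, would produce a subsequential limit $m=(\phi,\psi)\in\Wq^+\times\Wq^+$ of unit norm satisfying
\bqnn
\partial_a\phi-\Delta_D\phi+2\alpha_1 u_{\hat\eta}\phi=\alpha_2 u_{\hat\eta}\psi,\quad \phi(0)=\hat\eta\Phi,\qquad \partial_a\psi-\Delta_D\psi+\beta_2 u_{\hat\eta}\psi=0,\quad \psi(0)=\xi\Psi.
\eqnn
Provided $\psi\not\equiv 0$, the second equation yields $\psi(0)=\xi\hat{H}_{[\beta_2 u_{\hat\eta}]}\psi(0)$ with positive eigenvector, so Krein--Rutman (Lemma~\ref{A2}) forces $\xi\,r(\hat{H}_{[\beta_2 u_{\hat\eta}]})=1$, and \eqref{xi3} yields $\hat\eta=\eta_3$ as required.

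The main obstacle I foresee is excluding the degenerate case $\psi\equiv 0$. I plan to handle it in the same spirit as the cooperative analysis: if $\psi\equiv 0$, then $\phi\not\equiv 0$ and $\phi(0)=\hat\eta H_{[2\alpha_1 u_{\hat\eta}]}\phi(0)$ with $\phi(0)\in\Wqqp\setminus\{0\}$; but since $\hat\eta>1$ gives $u_{\hat\eta}\in\Wqd$, Lemma~\ref{A2} together with the identity \eqref{sp} enforces the strict inequality $\hat\eta\, r(H_{[2\alpha_1 u_{\hat\eta}]})<\hat\eta\, r(H_{[\alpha_1 u_{\hat\eta}]})=1$, which contradicts the eigenvalue equation for $\phi$.
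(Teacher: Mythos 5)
Your proposal is correct and follows essentially the same route as the paper: Lemma~\ref{L66} plus the spectral monotonicity of Lemma~\ref{A2} and \eqref{sp2} to bound $\eta$ along $\mathfrak{C}_3$ and rule out alternative~(b), followed by the mean-value-plus-compactness passage to a positive eigenvector $(\phi,\psi)$ of the linearization around $(\hat\eta,u_{\hat\eta},0)$, and finally the exclusion of $\psi\equiv 0$ via $\hat\eta\, r(H_{[2\alpha_1 u_{\hat\eta}]})<1$ so that $\xi\, r(\hat H_{[\beta_2 u_{\hat\eta}]})=1$ pins down $\hat\eta=\eta_3$. The only cosmetic difference is that you phrase stage one as a contradiction argument while the paper directly derives the bound $\eta\le M(\xi)$.
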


\begin{proof}
Let $\xi\in (1,N)$.
If $(\eta,u,v)\in\mathfrak{C}_3$ with $\eta>m_0$, then $u\ge z_\eta$ by Lemma~\ref{L66}, hence 
$$
1=r(\xi \hat{H}_{[\beta_1 v+\beta_2 u]})\le r(\xi \hat{H}_{[\beta_2 z_\eta]})
$$
owing to Lemma~\ref{A2} and \eqref{sp2}.
Since the right hand side tends to $\xi/N<1$ as $\eta\rightarrow\infty$, there must be some $M=M(\xi)>1$ such that \mbox{$\eta\le M$} for any $(\eta,u,v)\in\mathfrak{C}_3$. Thus $\mathfrak{C}_3$ joins up with $\mathfrak{B}_1$ due to Lemma~\ref{L55} and Lemma~\ref{L10}, say, at $(\hat{\eta},u_{\hat{\eta}},0)$.
To determine $\hat{\eta}$ we first recall that $$(\eta,u,v)=(\eta,u_\eta-w,v)\in \R\times\Wq\times \Wq$$ solves 
\eqref{1comp}-\eqref{2comp} subject to \eqref{3}-\eqref{4} if and only if $(\eta,w,v)\in \R\times\Wq\times \Wq$ solves
\begin{align}
&\partial_aw-\Delta_Dw=\alpha_1w^2-2\alpha_1 u_\eta w+\alpha_2 u_\eta v -\alpha_2 v w\ ,& w(0)=\eta W\ ,\label{18aaa}\\
&\partial_av-\Delta_Dv=-\beta_1v^2-\beta_2 v(u_\eta-w)\ ,& v(0)=\xi V\ ,\label{18bbb}
\end{align}
where we put
$$
W:=\int_0^{a_m} b_1(a)\, w(a)\,\rd a\ ,\qquad V:=\int_0^{a_m} b_2(a)\, v(a)\,\rd a\ .
$$
Introducing
$$
T:=\big(\partial_a-\Delta_D,\gamma_0)^{-1}\in \ml(\Lq\times\Wqq, \Wq)
$$
and the operators 
$$
K_*(\eta)(w,v):=\left(\begin{matrix} T(-2\alpha_1 u_\eta w+\alpha_2 u_\eta v ,\eta W)\\ T(-\beta_1 u_\eta v,\xi V)\end{matrix}\right)\ ,\qquad R_*(w,v):=-\left(\begin{matrix} T(\alpha_1 w^2-\alpha_2 vw,0)\\ T(-\beta_1 v^2+\beta_2 wv,0)\end{matrix}\right)\ 
$$ 
acting on $(w,v)\in \Wq\times\Wq$, equations \eqref{18aaa}, \eqref{18bbb} are equivalent to
\bqn\label{1999}
(w,v)-K_*(\eta)(w,v)+R_*(w,v)=0\ .
\eqn
The operators $K_*$ and $R_*$ possess the properties stated in \eqref{K}-\eqref{RR}. Now, as $\mathfrak{C}_3$ joins up with $\mathfrak{B}_1$ at $(\hat{\eta},u_{\hat{\eta}},0)$, there is a sequence $((\eta_j,u_j,v_j))_j$ in $\mathfrak{C}_3$ converging to $(\hat{\eta},u_{\hat{\eta}},0)$.
Set $w_j:=u_{\eta_j}-u_j$ and note that $w_j\in\Wq^+$ according to \eqref{uii}. As $u_\eta$ depends continuously on $\eta$, formulation \eqref{1999} and the properties of $K_*$ and $R_*$ readily imply (see, e.g., the proof of \cite[Lem.6.5.3]{LopezGomezChapman} or Lemma~\ref{L33}) that
$$
\frac{(w_j,v_j)}{\|(w_j,v_j)\|_{\Wq\times\Wq}}
$$
converges to some eigenvector $(\phi,\psi)\in \Wq^+\times\Wq^+$ of $K_*(\hat{\eta})$ associated to the eigenvalue $1$ and thus satisfying \eqref{18aaa}, \eqref{18bbb} with $\eta=\hat{\eta}$ when higher order terms are neglected:
\begin{align*}
\partial_a\phi-\Delta_D\phi &=-2\alpha_1 u_{\hat{\eta}}\phi+\alpha_2 u_{\hat{\eta}}\psi\ ,& \phi(0)=\hat{\eta}\,\Phi\ ,\\
\partial_a\psi-\Delta_D\psi &=-\beta_2 u_{\hat{\eta}}\psi\ ,& \psi(0)=\xi\Psi\ .
\end{align*}
Suppose $\psi= 0$. Then the first equation yields $\phi(0)=\hat{\eta} H_{[2\alpha_1 u_{\hat{\eta}}]} \phi(0)$ and thus, since $\phi(0)\in\Wqq\setminus\{0\}$, we obtain from Lemma~\ref{A2} and \eqref{sp} the contradiction
$$
1\le \hat{\eta} r\big(H_{[2\alpha_1 u_{\hat{\eta}}]}\big) < \hat{\eta} r\big(H_{[\alpha_1 u_{\hat{\eta}}]}\big) =1\ .
$$
Therefore, $\psi\not= 0$ and hence $\psi(0)\in\Wqq\setminus\{0\}$. The equation for $\psi$ ensures $\psi(0)=\xi \hat{H}_{[\beta_2 u_{\hat{\eta}}]} \psi(0)$, whence $\xi r(\hat{H}_{[\beta_2 u_{\hat{\eta}}]})=1$. We conclude $\hat{\eta}=\eta_3$ according to \eqref{xi3}.
\end{proof}

Finally, we show that $\mathfrak{C}_3$ connects the two semi-trivial branches if assumption \eqref{dominated} holds.

\begin{cor}\label{C90}
Suppose \eqref{dominated} and let $\xi>1$ be arbitrary. Then the $\eta$-projection of $\mathfrak{C}_3$ is contained in the interval $(1,\xi]$. In particular, $\mathfrak{C}_3$ joins $\mathfrak{B}_2$ with $\mathfrak{B}_1$.
\end{cor}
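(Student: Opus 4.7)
The plan is to first establish the $\eta$-bound $\eta\le\xi$ on $\mathfrak{C}_3$ under \eqref{dominated} and then to deduce the joining property from the earlier structural lemmas. Since $\eta>1$ on $\mathfrak{C}_3$ by repeating the argument of Lemma~\ref{L44} (as noted just before Lemma~\ref{L55}), such a bound will yield the interval $(1,\xi]$. Lemma~\ref{L10} will then furnish a uniform $\Wq\times\Wq$-bound on $\mathfrak{C}_3$, so $\mathfrak{C}_3$ will be bounded in $\R^+\times\Wqd\times\Wqd$, and the alternative in Lemma~\ref{L55} will force it to join $\mathfrak{B}_2$ with $\mathfrak{B}_1$.

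To obtain $\eta\le\xi$, I will exploit the integral reformulation \eqref{darst1}--\eqref{darst2}. Setting $h:=\alpha_1 u+\alpha_2 v$ and $k:=\beta_2 u+\beta_1 v$, both nonnegative, the fact that $(u,v)\in\Wqd\times\Wqd$ together with strong positivity of the evolution operators gives $u(0),v(0)\in\Wqqp\setminus\{0\}$, and \eqref{sp1}--\eqref{sp2} applied in the competition case then yield
\[
\eta\, r(H_{[h]})\,=\,1\,=\,\xi\, r(\hat{H}_{[k]})\ .
\]
So it suffices to verify $r(\hat{H}_{[k]})\le r(H_{[h]})$.

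Here is where \eqref{dominated} enters: the first two conditions give $0\le h\le k$ pointwise on $J\times\Om$, and the standard parabolic comparison (applied, for instance, to the Duhamel representation of $\Pi_{[h]}-\Pi_{[k]}$ with the nonnegative source $(k-h)\Pi_{[k]}(\cdot,0)\Phi$) together with positivity of $\Pi_{[h]}$ will deliver
\[
0\le\Pi_{[k]}(a,0)\Phi\le\Pi_{[h]}(a,0)\Phi\ ,\quad \Phi\in L_q^+\ ,\quad a\in J\ .
\]
Combining this with $b_2\le b_1$ from \eqref{dominated} and integrating over $(0,a_m)$ yields $\hat{H}_{[k]}\Phi\le H_{[h]}\Phi$ for every $\Phi\in\Wqqp$. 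Lemma~\ref{A2} (monotonicity of the spectral radius in the Krein-Rutman theorem) then gives $r(\hat{H}_{[k]})\le r(H_{[h]})$, hence $\eta\le\xi$ as required.

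The main obstacle I anticipate is the operator-level comparison $\Pi_{[k]}\le\Pi_{[h]}$ on the positive cone; it is standard but needs to be pinned down in the present $L_q$-maximal regularity framework, which I would handle by the Duhamel argument sketched above together with the strong positivity properties recorded in Section~\ref{sec 2}. Once this step is in place, the remainder is a routine combination of \eqref{sp1}--\eqref{sp2}, Krein-Rutman monotonicity from Lemma~\ref{A2}, and the bounds supplied by Lemmas~\ref{L10} and~\ref{L55}.
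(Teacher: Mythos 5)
Your proposal is correct and follows the same route as the paper: under \eqref{dominated} you show the operator inequality $\hat{H}_{[\beta_1 v+\beta_2 u]}\le H_{[\alpha_1 u+\alpha_2 v]}$ on the positive cone via parabolic comparison for the evolution operators combined with $b_2\le b_1$, deduce $r(\hat{H}_{[\beta_1 v+\beta_2 u]})\le r(H_{[\alpha_1 u+\alpha_2 v]})$, and conclude $\eta\le\xi$ from \eqref{sp1}--\eqref{sp2}; the joining then follows from Lemmas~\ref{L10} and~\ref{L55}. One small inaccuracy: for the last spectral-radius step you cite Lemma~\ref{A2}, but that lemma only compares $r(H_{[g]})$ with $r(H_{[h]})$ for the \emph{same} birth profile $b_1$; since here one operator involves $b_2$ and the other $b_1$, you actually need the general monotonicity of the spectral radius for compact positive operators ordered on the cone, which is what the paper invokes via \cite[Thm.3.2(v)]{AmannSIAMReview} after explicitly establishing $\big(H_{[\alpha_1 u+\alpha_2 v]}-\hat{H}_{[\beta_1 v+\beta_2 u]}\big)\Phi\ge 0$ for $\Phi\in\Wqqp$.
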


\begin{proof}
Given $\Phi\in\Wqqp$ and $u,v\in\Wq^+$ we have
\bqnn
\begin{split}
\big(H_{[\alpha_1 u+\alpha_2 v]}-\hat{H}_{[\beta_1 v+\beta_2 u]}\big)\Phi =& \int_0^{a_m} \big( b_1(a)-b_2(a)\big)\Pi_{[\beta_1 v+\beta_2 u]}(a,0)\Phi\,\rd a \\
&+ \int_0^{a_m} b_1(a) \big( \Pi_{[\alpha_1 u+\alpha_2 v]}(a,0)- \Pi_{[\beta_1 v+\beta_2 u]}(a,0) \big) \Phi\,\rd a \ .
\end{split}
\eqnn
Since $\alpha_1 u+\alpha_2 v\le \beta_1 v+\beta_2 u$ by \eqref{dominated}, the parabolic maximum principle implies 
$$
\big(\Pi_{[\alpha_1 u+\alpha_2 v]}(a,0)- \Pi_{[\beta_1 v+\beta_2 u]}(a,0) \big) \Phi\ge 0\ \,\text{on}\ \, \Om\ ,\quad a\in J\ ,
$$
whence $H_{[\alpha_1 u+\alpha_2 v]}\ge \hat{H}_{[\beta_1 v+\beta_2 u]}$ by the above equality from which 
$$
r(H_{[\alpha_1 u+\alpha_2 v]})\ge r(\hat{H}_{[\beta_1 v+\beta_2 u]})
$$ 
due to \cite[Thm.3.2(v)]{AmannSIAMReview}. Thus, given $(\eta,u,v)\in\mathfrak{C}_3$ we have
$$
1=\xi r(\hat{H}_{[\beta_1 v+\beta_2 u]})\le \xi r(H_{[\alpha_1 u+\alpha_2 v]})=\frac{\xi}{\eta}
$$
by \eqref{sp2} and \eqref{sp1}. So the $\eta$-projection of $\mathfrak{C}_3$ is contained in $(1,\xi]$. Due to Lemma~\ref{L55} and Lemma~\ref{L10}, this in particular implies that $\mathfrak{C}_3$ joins $\mathfrak{B}_2$ with $\mathfrak{B}_1$.
\end{proof}

This completes the proof of Theorem~\ref{T3}.


\appendix

\section{Auxiliary Results: Semi-Trivial Branches}\label{sec 3}

In this appendix we collect certain results regarding the parameter-dependent equation
\bqn\label{Ae1}
\partial_a u-\Delta_D u=-\alpha_1u^2\ ,\quad u(0,\cdot)=\eta\int_0^{a_m}b_1(a)\, u(a,\cdot)\,\rd a\ .
\eqn
Most of these results have been proved in \cite{WalkerJRAM}. 

Suppose \eqref{5} and \eqref{6} in the following. We first recall a comparison principle (see \cite[Lem.3.2]{WalkerJRAM}) for parabolic equations with nonlocal initial conditions of the form \ref{Ae1}, which, in particular, guarantees uniqueness of positive solutions:

\begin{lem}\label{A3}
Let $\eta>1$ and $F\in \Lq^+$. Suppose $u,v\in\Wqd$ satisfy either
\begin{align*}
&\partial_au-\Delta_D u=-\alpha_1 u^2+F\quad \text{in}\ J\times \Om\ , & u(0)\ge\eta \int_0^{a_m}b_1(a)\,u(a)\,\rd a\ ,\\
&\partial_av-\Delta_D v=-\alpha_1 v^2\quad  \text{in}\ J\times \Om\ , & v(0)=\eta \int_0^{a_m}b_1(a)\,v(a)\,\rd a\ ,
\end{align*}
or
\begin{align*}
&\partial_au-\Delta_D u=-\alpha_1 u^2\quad  \text{in}\ J\times \Om\  , & u(0)=\eta \int_0^{a_m}b_1(a)\,u(a)\,\rd a\ ,\\
&\partial_av-\Delta_D v=-\alpha_1 v^2-F\quad  \text{in}\ J\times \Om\ , & v(0)\le \eta \int_0^{a_m}b_1(a)\,v(a)\,\rd a\ .
\end{align*}
Then $u\ge v$.
\end{lem}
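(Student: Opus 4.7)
The natural plan is to linearize by introducing the difference $w := u - v$. In the first scenario a direct subtraction gives
\begin{equation*}
\partial_a w - \Delta_D w + \alpha_1 (u+v) w = F \ge 0 \qquad \text{on } J\times \Om,
\end{equation*}
together with the nonlocal inequality $w(0) \ge \eta \int_0^{a_m} b_1(a) w(a)\,\mathrm{d}a$. Since $u,v \in \Wq \hookrightarrow C^\varrho(J,C(\bar\Om))$ for some $\varrho > 0$ by \eqref{embb}, the coefficient $h := \alpha_1 (u+v)$ lies in $C^\varrho(J,C(\bar\Om))$, so the evolution operator $\Pi_{[h]}(\cdot,\cdot)$ and the operators $H_{[h]} \in \mk(\Wqq)$ of Section~\ref{sec 2} are at our disposal, both positive in the appropriate senses.

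The variation-of-constants representation yields
\begin{equation*}
w(a) = \Pi_{[h]}(a,0)\, w(0) + \int_0^a \Pi_{[h]}(a,\sigma) F(\sigma)\,\mathrm{d}\sigma\ ,\quad a \in J\ .
\end{equation*}
Substituting this into the nonlocal inequality for $w(0)$ and abbreviating
\begin{equation*}
G := \eta \int_0^{a_m} b_1(a) \int_0^a \Pi_{[h]}(a,\sigma) F(\sigma)\,\mathrm{d}\sigma\,\mathrm{d}a \,\in\, \Wqqp\ ,
\end{equation*}
one obtains
\begin{equation*}
\big(1 - \eta H_{[h]}\big)\, w(0) \,\ge\, G\ .
\end{equation*}
To conclude $w(0) \ge 0$, the key step is to verify that $1 - \eta H_{[h]}$ is boundedly invertible with positivity-preserving inverse. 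Because $u \in \Wqd$ is strictly positive somewhere on $J\times\Om$, we have $h \gneq \alpha_1 v$ in a strict enough sense that Lemma~\ref{A2} gives
\begin{equation*}
\eta\, r\big(H_{[h]}\big) \,<\, \eta\, r\big(H_{[\alpha_1 v]}\big) = 1
\end{equation*}
by \eqref{sp}. Hence the Neumann series $(1 - \eta H_{[h]})^{-1} = \sum_{n\ge 0} (\eta H_{[h]})^n$ converges in $\ml(\Wqq)$ and, as $H_{[h]}$ is positive, it maps $\Wqqp$ into itself. Applying it to the displayed inequality gives $w(0) \ge 0$, and then the representation formula together with positivity of $\Pi_{[h]}(a,0)$ and of the $F$-integral yields $w(a) \ge 0$ for every $a \in J$, as required.

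The second scenario is handled in exactly the same way: forming $w = u - v$ again produces
\begin{equation*}
\partial_a w - \Delta_D w + \alpha_1(u+v) w = F \ge 0\ ,\qquad w(0) \ge \eta \int_0^{a_m} b_1(a) w(a)\,\mathrm{d}a\ ,
\end{equation*}
so the argument above applies verbatim, the only change being that the strict inequality $\eta r(H_{[h]}) < 1$ now follows from $h \gneq \alpha_1 u$ and $\eta r(H_{[\alpha_1 u]}) = 1$. The main technical point throughout is the strict comparison of spectral radii: one must be a little careful that $h$ exceeds $\alpha_1 v$ (respectively $\alpha_1 u$) on a set large enough for the strong positivity of $H_{[\cdot]}$ in \eqref{ssss} and the Krein-Rutman/monotonicity statement of Lemma~\ref{A2} to force the strict inequality, but this is immediate from the fact that $u, v \in \Wqd$ have traces in $\mathrm{int}(\Wqqp)$.
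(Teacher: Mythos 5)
Your proof is correct and follows essentially the same approach as the paper: the paper cites \cite[Lem.3.2]{WalkerJRAM} for this lemma and notes explicitly that the cited proof is based on the spectral-radius/Krein--Rutman monotonicity result recorded here as Lemma~\ref{A2}, which is precisely the mechanism you use (form $w=u-v$, reduce to $(1-\eta H_{[\alpha_1(u+v)]})w(0)\ge 0$, and invert via a positive Neumann series after checking $\eta\,r(H_{[\alpha_1(u+v)]})<1$). The one point you treat slightly casually, namely that $\eta\,r(H_{[\alpha_1 v]})=1$ in scenario one (resp.\ $\eta\,r(H_{[\alpha_1 u]})=1$ in scenario two), does hold, since the relevant function is then the unique nontrivial nonnegative solution $u_\eta$ of \eqref{Aee1} and \eqref{sp} applies.
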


Along the lines of the proof of \cite[Lem.3.2]{WalkerJRAM} one may also derive the following variant:

\begin{lem}\label{A3a}
Let $\eta>1$ and $F\in \Lq^+$. Let $R>0$ and suppose $u,w,v\in\Wqd$ with $v\le R$ on $J\times\Om$ satisfy 
\begin{align*}
&\partial_au-\Delta_D u=-\alpha_1 u^2-\alpha_2uv\quad  \text{in}\ J\times \Om\ , & u(0)=\eta \int_0^{a_m}b_1(a)\,u(a)\,\rd a\ ,\\
&\partial_aw-\Delta_D w=-\alpha_1 w^2-\alpha_2 Rw-F\quad  \text{in}\ J\times \Om\ , & w(0)\le\eta \int_0^{a_m}b_1(a)\,w(a)\,\rd a\ .
\end{align*}
Then $u\ge w$.
\end{lem}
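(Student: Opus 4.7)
The plan is to imitate the strategy underlying Lemma~\ref{A3} and reduce the claim to a spectral estimate for $\eta H_{[h]}$ with an appropriate potential $h$. Setting $z:=u-w\in\Wq$ and subtracting the two equations, I compute
\begin{equation*}
\partial_a z-\Delta_D z=-\alpha_1(u+w)z-\alpha_2 uv+\alpha_2 Rw+F.
\end{equation*}
The decisive algebraic identity is
\begin{equation*}
-\alpha_2 uv+\alpha_2 Rw=-\alpha_2 R(u-w)+\alpha_2 u(R-v),
\end{equation*}
which, together with $u\ge 0$ and $R-v\ge 0$, recasts the equation as
\begin{equation*}
(\partial_a-\Delta_D+h)\,z=G,\qquad z(0)\ge\eta\int_0^{a_m}b_1(a)\,z(a)\,\rd a,
\end{equation*}
where $h:=\alpha_1(u+w)+\alpha_2 R$ is a nonnegative H\"older-continuous potential on $J\times\bar\Om$ and $G:=\alpha_2 u(R-v)+F\in\Lq^+$.

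The next step is Duhamel's formula with the evolution operator $\Pi_{[h]}$:
\begin{equation*}
z(a)=\Pi_{[h]}(a,0)\,z(0)+\int_0^a\Pi_{[h]}(a,\sigma)\,G(\sigma)\,\rd\sigma,\quad a\in J.
\end{equation*}
Multiplying by $\eta b_1(a)$, integrating over $J$, and combining with the inequality satisfied by $z(0)$ produces
\begin{equation*}
(1-\eta H_{[h]})\,z(0)\ge B,\qquad B:=\eta\int_0^{a_m}b_1(a)\int_0^a\Pi_{[h]}(a,\sigma)\,G(\sigma)\,\rd\sigma\,\rd a\in\Wqqp,
\end{equation*}
where positivity of $\Pi_{[h]}$ gives $B\ge 0$.

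The crux is to invert $1-\eta H_{[h]}$ with a positive inverse. Since $u\in\Wqd$ solves $\partial_a u-\Delta_D u+(\alpha_1 u+\alpha_2 v)u=0$ with the nonlocal condition \eqref{3}, positivity of $\Pi_{[\alpha_1 u+\alpha_2 v]}$ forces $u(0)\in\Wqqp\setminus\{0\}$, and then \eqref{sp1} yields $\eta\,r(H_{[\alpha_1 u+\alpha_2 v]})=1$. Now
\begin{equation*}
h-(\alpha_1 u+\alpha_2 v)=\alpha_1 w+\alpha_2(R-v)\ge 0,
\end{equation*}
and this function is not identically zero since $w\in\Wqd$ is nontrivial. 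The strong positivity of $H_{[\cdot]}$ recorded in \eqref{ssss}, together with the Krein-Rutman characterization in Lemma~\ref{A2}, then forces the \emph{strict} inequality
\begin{equation*}
\eta\,r(H_{[h]})<\eta\,r(H_{[\alpha_1 u+\alpha_2 v]})=1.
\end{equation*}
Hence $1-\eta H_{[h]}$ is invertible via its Neumann series $\sum_{k\ge 0}(\eta H_{[h]})^k$, which is a sum of positive operators and is thus itself positive. Therefore $z(0)\ge(1-\eta H_{[h]})^{-1}B\ge 0$, and the Duhamel representation combined with the positivity of $\Pi_{[h]}$ and $G\ge 0$ gives $z(a)\ge 0$ throughout $J$, i.e.\ $u\ge w$.

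The only step demanding care is the strict monotonicity of the spectral radius. Mere monotonicity of $r(H_{[\cdot]})$ in the potential would deliver only $\eta r(H_{[h]})\le 1$, which is not enough to invert $1-\eta H_{[h]}$. The strict drop relies on the fact that $H_{[h]}$ is a strongly positive compact operator on $\Wqq$, so that Krein-Rutman (in the form of Lemma~\ref{A2}) upgrades a nontrivial pointwise increase of the potential to a strict decrease of the spectral radius. Everything else is bookkeeping directly parallel to the proof of Lemma~\ref{A3}.
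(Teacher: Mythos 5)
Your proof is correct and is exactly the argument the paper has in mind: the paper gives no details for Lemma~\ref{A3a}, but refers to the comparison technique of \cite[Lem.~3.2]{WalkerJRAM} (underlying Lemma~\ref{A3}), and your reduction to the strict spectral inequality $\eta\,r(H_{[h]})<\eta\,r(H_{[\alpha_1u+\alpha_2v]})=1$ via Lemma~\ref{A2}, followed by a positive Neumann-series inversion of $1-\eta H_{[h]}$ and Duhamel, is precisely that scheme adapted to the potential $h=\alpha_1(u+w)+\alpha_2 R$. The only point you should be slightly more careful about is the claim that positivity of $\Pi_{[\alpha_1u+\alpha_2v]}$ ``forces'' $u(0)\in\Wqqp\setminus\{0\}$: positivity gives $u(0)\ge0$ from $u\in\Wq^+$ via the trace embedding, while $u(0)\ne0$ follows from the representation $u(a)=\Pi_{[\alpha_1u+\alpha_2v]}(a,0)\,u(0)$ and $u\not\equiv0$; these are two separate observations, not a single consequence of positivity.
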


Properties of solutions to \eqref{Ae1} are connected to operators of the form $H_{[h]}$ as introduced in Section~\ref{sec 2}. The next lemma is a consequence of the famous Krein-Rutman theorem \cite[Thm.3.2]{AmannSIAMReview} and gives information about the spectral radii of such operators. We refer to \cite[Lem.3.1]{WalkerJRAM} for a proof. Actually, the proof of Lemma~\ref{A3} given in \cite{WalkerJRAM} is  based on the next lemma. 

\begin{lem}\label{A2}
For $h\in C^\varrho (J,C(\bar{\Om}))$ with $\varrho>0$, the operator $H_{[h]}\in\mk(\Wqb^{2-2/q})$ is strongly positive, i.e. \eqref{ssss} holds. In particular, the spectral radius $r(H_{[h]})>0$ is a simple eigenvalue with an eigenfunction $B_{[h]}$ belonging to $\mathrm{int}(\Wqqp)$. It is the only eigenvalue of $H_{[h]}$ with a positive eigenfunction. Moreover, if $h$ and $g$ both belong to $C^\varrho (J,C(\bar{\Om}))$ with $g\ge h$ but $g\not\equiv h$, then $r(H_{[g]})<r(H_{[h]})$.
\end{lem}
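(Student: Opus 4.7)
The strategy is to verify the hypotheses of the Krein--Rutman theorem \cite[Thm.3.2]{AmannSIAMReview} for $H_{[h]}$ as a compact operator on the ordered Banach space $\Wqb^{2-2/q}$, whose positive cone $\Wqqp$ has non-empty interior. Compactness of $H_{[h]}$ was already noted in Section~\ref{sec 2}, so the real work is in establishing strong positivity.

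First I would establish that $\Pi_{[h]}(a,0)\Phi \in \mathrm{int}(\Wqqp)$ for every $a\in (0,a_m]$ and $\Phi\in \Wqqp\setminus\{0\}$. Since $-\Delta_D+h$ is a second-order elliptic operator with Dirichlet boundary values, the parabolic strong maximum principle (applied to $z(a)=\Pi_{[h]}(a,0)\Phi$, which solves $\partial_a z-\Delta_D z+hz=0$ with $z(0)=\Phi\ge 0$, $z(0)\not\equiv 0$) yields $z(a)>0$ in $\Om$ for $a>0$, and the Hopf lemma gives $\partial_\nu z(a)<0$ on $\partial\Om$; by definition of $\Wqqp$, this is exactly $z(a)\in\mathrm{int}(\Wqqp)$. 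Integrating against $b_1$, and using that $b_1$ is strictly positive near $a=a_m$ by \eqref{5}, yields $H_{[h]}\Phi\in\mathrm{int}(\Wqqp)$, which is \eqref{ssss}. The Krein--Rutman theorem then furnishes a simple algebraic eigenvalue $r(H_{[h]})>0$ with eigenfunction $B_{[h]}\in\mathrm{int}(\Wqqp)$, uniqueness of the positive eigendirection, and existence of a strictly positive dual eigenfunctional $B_{[h]}^{*}$ of $H_{[h]}^{*}$ corresponding to the same eigenvalue.

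For the strict monotonicity, suppose $g\ge h$ with $g\not\equiv h$ and fix $\Phi\in\mathrm{int}(\Wqqp)$. Writing $w(a):=\Pi_{[h]}(a,0)\Phi-\Pi_{[g]}(a,0)\Phi$, one checks
\begin{equation*}
\partial_a w-\Delta_D w+hw=(g-h)\,\Pi_{[g]}(a,0)\Phi\ ,\qquad w(0)=0\ ,
\end{equation*}
with Dirichlet boundary conditions. Since $\Pi_{[g]}(a,0)\Phi\in\mathrm{int}(\Wqqp)$ by the first step and $(g-h)\ge 0$ is not identically zero, the right-hand side is a non-trivial non-negative source. The strong maximum principle then forces $w(a)\in\mathrm{int}(\Wqqp)$ for all $a>0$, and integrating against $b_1$ gives $H_{[h]}\Phi-H_{[g]}\Phi\in\mathrm{int}(\Wqqp)$.

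The conclusion follows by the standard duality trick: pairing with the strictly positive dual eigenfunctional $B_{[g]}^{*}$ of $H_{[g]}^{*}$ at the eigenvalue $r(H_{[g]})$ and taking $\Phi=B_{[h]}$,
\begin{equation*}
r(H_{[g]})\,\langle B_{[h]},B_{[g]}^{*}\rangle=\langle H_{[g]}B_{[h]},B_{[g]}^{*}\rangle<\langle H_{[h]}B_{[h]},B_{[g]}^{*}\rangle=r(H_{[h]})\,\langle B_{[h]},B_{[g]}^{*}\rangle\ ,
\end{equation*}
where the strict inequality uses that $(H_{[h]}-H_{[g]})B_{[h]}\in\mathrm{int}(\Wqqp)$ and $B_{[g]}^{*}$ is strictly positive on $\Wqqp\setminus\{0\}$; the pairing $\langle B_{[h]},B_{[g]}^{*}\rangle$ is positive for the same reason. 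Dividing gives $r(H_{[g]})<r(H_{[h]})$. The point I would expect to require the most care is verifying that the Hopf-type boundary behaviour from the parabolic maximum principle genuinely lands the evolved profiles in the topological interior $\mathrm{int}(\Wqqp)$ (i.e.\ with the correct trace-class regularity) rather than merely in the set-theoretic positive cone; this rests on the smoothing of the parabolic semigroup together with the embedding $\Wq\hookrightarrow C(J,\Wqq)$ recorded in \eqref{emb}.
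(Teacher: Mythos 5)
The paper does not carry out this proof in text: it simply cites the author's earlier work for it (Lemma 3.1 of \cite{WalkerJRAM}), so there is no in-paper proof against which to compare line by line. Your argument is the natural one and is essentially correct: you invoke the version of Krein--Rutman in \cite[Thm.3.2]{AmannSIAMReview} after establishing strong positivity of $H_{[h]}$ via the parabolic strong maximum principle and Hopf lemma, and you derive the strict monotonicity of the spectral radius by pairing $H_{[h]}B_{[h]}-H_{[g]}B_{[h]}\in\mathrm{int}(\Wqqp)$ with the positive dual eigenfunctional of $H_{[g]}^*$. This is the standard route and matches the toolkit the paper itself signposts.

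One small imprecision in your monotonicity step: if $g-h$ vanishes identically on $[0,a_*]\times\bar\Om$ for some $a_*>0$ (which is not excluded by $g\not\equiv h$), then $w\equiv 0$ on $[0,a_*]$ and the claim ``$w(a)\in\mathrm{int}(\Wqqp)$ for all $a>0$'' fails on that initial segment. What is actually true and suffices is weaker: $w(a)\ge 0$ for all $a$, and $w(a)\in\mathrm{int}(\Wqqp)$ for $a$ in a left neighbourhood of $a_m$ (since by continuity of $g,h$ the source $(g-h)\Pi_{[g]}\Phi$ must be active strictly before $a_m$, and strict positivity then propagates forward). Combining this with the hypothesis in \eqref{5} that $b_1>0$ near $a_m$, the integral $\int_0^{a_m}b_1(a)\,w(a)\,\rd a$ still lands in $\mathrm{int}(\Wqqp)$, so your conclusion $H_{[h]}\Phi-H_{[g]}\Phi\in\mathrm{int}(\Wqqp)$ and the resulting strict inequality $r(H_{[g]})<r(H_{[h]})$ are unaffected. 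You should also make explicit that the open set argument for the integral (an average of values in an open convex ball inside $\mathrm{int}(\Wqqp)$) is what lets you pass from pointwise interiority to interiority of the integral; the continuity $a\mapsto z(a)$ in $\Wqq$ from \eqref{emb} is exactly what makes that step work.
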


Finally, we gather results from \cite{WalkerJRAM} about properties of solutions to \eqref{Ae1} being fundamental for the investigation of \eqref{1}-\eqref{4}. Recall that $\lambda_1>0$ is the principal eigenvalue of $-\Delta_D$ with positive eigenfunction $\varphi_1$ (normalized such that $\|\varphi_1\|_\infty =1$).

\begin{thm}\label{A1}
For each $\eta>1$ there is a unique solution $u_\eta\in\Wq^+\setminus\{0\}$ to equation \eqref{Ae1}. The mapping $(\eta\mapsto u_\eta)\in C^\infty((1,\infty),\Wq)$ is real analytic with $\|u_\eta\|_{\Wq}\rightarrow 0$ as $\eta\rightarrow 1$ and  $\|u_\eta\|_{\Wq}\rightarrow\infty$ as $\eta\rightarrow\infty$. 
There is $\kappa>0$ such that, for $\eta>1$,
\bqn\label{we}
\kappa \eta^2\ge u_\eta(a)\ge\frac{\lambda_1}{\alpha_1}\frac{\eta-1}{\eta(e^{\lambda_1 a}-1)+1-e^{-\lambda_1 (a_m-a)}}\,\varphi_1\quad \text{on}\ \,\Om ,\quad a\in J\ ,
\eqn
and $\frac{\partial}{\partial\eta}u_\eta (a)\in\mathrm{int}(\Wqqp)$ for $a\in J$. If $\eta_1 > \eta_2$, then $u_{\eta_1}\ge u_{\eta_2}$. Finally, if $\eta\le 1$, then \eqref{Ae1} has no solution in $\Wq^+\setminus\{0\}$.
\end{thm}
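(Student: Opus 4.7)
The plan has four ingredients: non-existence for $\eta\le 1$ by testing against $\varphi_1$, existence for $\eta>1$ via bifurcation from $(1,0)$ combined with an a priori upper bound, verification of the explicit bounds~\eqref{we}, and finally analyticity, positivity of $\partial_\eta u_\eta$, monotonicity, and limits.

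For non-existence, suppose $u\in\Wqd$ solves~\eqref{Ae1}. Then $u(0)\not\equiv 0$ (otherwise unique solvability of the parabolic IVP forces $u\equiv 0$), and the strong parabolic maximum principle yields $u>0$ on $(0,a_m]\times\Om$. Testing against the normalised principal eigenfunction $\varphi_1$ and using the self-adjointness of $-\Delta_D$ gives $z'(a)=-\lambda_1 z(a)-\alpha_1\int_\Om\varphi_1 u^2\,\rd x$ for $z(a):=\int_\Om\varphi_1 u(a)\,\rd x$, whence $z(a)<e^{-\lambda_1 a}z(0)$ for $a>0$. Inserting this into the nonlocal boundary condition and invoking~\eqref{6} produces $z(0)<\eta z(0)$ with $z(0)>0$, forcing $\eta>1$.

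For existence at fixed $\eta>1$, I would first apply the Crandall--Rabinowitz theorem to $F(\eta,u):=u-T(-\alpha_1 u^2,\eta U)$ with $T:=(\partial_a-\Delta_D,\gamma_0)^{-1}$ at $(\eta,u)=(1,0)$: by~\eqref{1mio} and Lemma~\ref{A2}, $1$ is a simple eigenvalue of $\eta H_{[0]}$ with a positive eigenvector, and transversality is routinely verified, yielding a local continuous curve of positive solutions from $(1,0)$. Uniqueness at any $\eta>1$ is immediate from Lemma~\ref{A3} applied with $F=0$ in both directions. The upper bound of~\eqref{we} follows from a Riccati argument: the spatially constant function $g(a):=M(0)/(1+\alpha_1 a M(0))$, with $M(a):=\|u_\eta(a,\cdot)\|_\infty$, solves $g'=-\alpha_1 g^2$ and dominates $u_\eta$ by comparison (using $u_\eta=0\le g$ on $\partial\Om$ and $u_\eta(0,\cdot)\le M(0)=g(0)$), so $M(a)\le g(a)\le 1/(\alpha_1 a)$; substituting into the nonlocal initial condition gives the implicit estimate $M(0)\le\eta\|b_1\|_\infty\alpha_1^{-1}\log(1+\alpha_1 a_m M(0))$, which bootstraps to $M(0)\le\kappa\eta^2$. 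This bound, together with invertibility of $F_u(\eta,u_\eta)$ verified below, allows the local curve to be extended globally over all $\eta>1$ by the implicit function theorem. For the lower bound in~\eqref{we}, the function $\underline{u}(a,x):=f(a)\varphi_1(x)$, with $f$ equal to the right-hand side of~\eqref{we}, is a subsolution: $f$ solves $f'+\lambda_1 f=-\alpha_1 f^2$ exactly, so $\varphi_1\le 1$ gives $\partial_a\underline{u}-\Delta_D\underline{u}=-\alpha_1\underline{u}^2-F$ with $F:=\alpha_1 f^2\varphi_1(1-\varphi_1)\ge 0$; moreover, the denominator $D(a):=\eta(e^{\lambda_1 a}-1)+1-e^{-\lambda_1(a_m-a)}$ is calibrated so that $D(a)e^{-\lambda_1 a}$ increases from $D(0)$ to $\eta D(0)$ on $[0,a_m]$, which combined with~\eqref{6} yields $\underline{u}(0)\le\eta\int_0^{a_m}b_1(a)\underline{u}(a)\,\rd a$. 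Lemma~\ref{A3} then delivers $u_\eta\ge\underline{u}$.

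For the remaining assertions, $F$ is polynomial in $u$ and linear in $\eta$, hence real analytic, and $F_u(\eta,u_\eta)w=w-T(-2\alpha_1 u_\eta w,\eta W)$ is invertible because~\eqref{sp} and the strict monotonicity in Lemma~\ref{A2} together with $u_\eta>0$ yield $\eta r(H_{[2\alpha_1 u_\eta]})<\eta r(H_{[\alpha_1 u_\eta]})=1$, placing $1$ outside the spectrum. The implicit function theorem then gives real analyticity of $\eta\mapsto u_\eta$. Differentiating $F(\eta,u_\eta)=0$ in $\eta$ leads to $(1-\eta H_{[2\alpha_1 u_\eta]})(\partial_\eta u_\eta)(0)=U\in\Wqqp\setminus\{0\}$; the Neumann series for the inverse (positive operators with spectral radius $<1$) places $(\partial_\eta u_\eta)(0)$ in $\Wqqp$, and strong positivity of $\Pi_{[2\alpha_1 u_\eta]}(a,0)$ on $\Wqq$ (cf.~\eqref{ssss}) promotes this to $(\partial_\eta u_\eta)(a)\in\mathrm{int}(\Wqqp)$. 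Monotonicity $u_{\eta_1}\ge u_{\eta_2}$ for $\eta_1>\eta_2$ follows from Lemma~\ref{A3}: $u_{\eta_1}(0)=\eta_1 U_{\eta_1}\ge\eta_2 U_{\eta_1}$ makes $u_{\eta_1}$ a supersolution of the $\eta_2$-problem. The blow-up $\|u_\eta\|_{\Wq}\to\infty$ as $\eta\to\infty$ is immediate from the lower bound in~\eqref{we}, whose value at $a=0$ grows linearly in $\eta$, while $\|u_\eta\|_{\Wq}\to 0$ as $\eta\to 1^+$ follows because $M(0)\le\kappa\eta^2$ together with maximal $L_q$-regularity (as in Lemma~\ref{L00}) renders $(u_\eta)$ precompact in $\Wq$ near $\eta=1$, and any accumulation point must solve~\eqref{Ae1} at $\eta=1$, hence vanishes by the first paragraph. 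The main obstacle is the bootstrap of the implicit inequality $M(0)\le\eta\|b_1\|_\infty\alpha_1^{-1}\log(1+\alpha_1 a_m M(0))$ to the explicit form $M(0)\le\kappa\eta^2$ with $\kappa$ independent of $\eta$, coupled with the careful identification of the calibration in $D(a)$ that makes $\underline{u}$ a subsolution for every $b_1$ satisfying~\eqref{6}.
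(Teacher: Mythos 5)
The paper's own ``proof'' of Theorem~\ref{A1} is a pointer to \cite{WalkerJRAM} (Thm.~2.1, Cor.~3.3, Lem.~3.6, Lem.~3.7) for everything except analyticity, which is handled by the analytic implicit function theorem applied to $\Gamma(\eta,u)=(\partial_a u-\Delta_D u+\alpha_1 u^2,\,u(0)-\eta\int_0^{a_m}b_1 u\,\rd a)$. Your reconstruction is consistent with that route and its ingredients: non-existence for $\eta\le 1$ by pairing with $\varphi_1$ and invoking the normalization~\eqref{6}; existence via Crandall--Rabinowitz at $\eta=1$ continued by the implicit function theorem (the linearization being invertible because $\eta r(H_{[2\alpha_1 u_\eta]})<\eta r(H_{[\alpha_1 u_\eta]})=1$ by Lemma~\ref{A2} and \eqref{sp}); uniqueness and monotonicity from Lemma~\ref{A3}; the explicit lower bound from the calibrated subsolution $f(a)\varphi_1$ (your Riccati identity $f'+\lambda_1 f=-\alpha_1 f^2$ and the verification $D(a)e^{-\lambda_1 a}\uparrow\eta D(0)$ are both correct); strong positivity of $\partial_\eta u_\eta$ by a Neumann-series argument on $(1-\eta H_{[2\alpha_1 u_\eta]})^{-1}$ followed by strong positivity of the evolution operator; and the limits from the two bounds in~\eqref{we}.

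On the one step you flag as the ``main obstacle'': the bootstrap does close. From $M(0)\le C\eta\log(1+cM(0))$ with $C=\|b_1\|_\infty/\alpha_1$, $c=\alpha_1 a_m$, use $\log(1+cy)\le\log(1+c)+\log y$ and $\log y\le\sqrt{y}$ for $y\ge 1$; if $M(0)>4C^2\eta^2\max\{\log^2(1+c),1\}$ one gets $M(0)<\tfrac12\sqrt{M(0)}+\tfrac12 M(0)\le M(0)$, a contradiction, so $\kappa=4C^2\max\{\log^2(1+c),1\}$ (and $\kappa\ge 1$ to cover $M(0)\le 1$) works, independently of $\eta$. Two small points worth making explicit in a full write-up: the global continuation over $(1,\infty)$ is an open--closed argument in which closedness uses the $L_\infty$ a~priori bound (to pass to a limit) \emph{and} the lower bound in~\eqref{we} (to ensure the limit is nonzero); and the convergence $\|u_\eta\|_{\Wq}\to 0$ as $\eta\to1^+$ requires first upgrading the weak-$\Wq$ accumulation point $u_*=0$ to strong convergence, which follows from the heat-semigroup representation of $u_\eta(0)$ as in Lemma~\ref{L00} once $\|u_\eta\|_{L_\infty(J,C(\bar\Om))}\to 0$ is known from the compact embedding.
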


The proof of this theorem is given in \cite[Thm.2.1, Cor.3.3, Lem.3.6, Lem.3.7]{WalkerJRAM} except for the analyticity of the mapping $\eta\mapsto u_\eta$.
However, this follows exactly as in the proof of \cite[Thm.2.1]{WalkerJRAM} (see subsection~3.3 therein) by taking into account the real analyticity of the mapping
$$
\Gamma: (1,\infty)\times \Wq\rightarrow\Lq\times\Wqq\, ,\quad (\eta,u)\mapsto\left(\partial_au-\Delta_Du+\alpha_1 u^2 \,,\,  u(0)-\eta \int_0^{a_m}b_1(a)\, u(a,\cdot)\,\rd a\right)
$$
and invoking the implicit function theorem for analytic maps, e.g. \cite[Thm.4.5.4]{BuffoniToland}.


\end{document}